\DeclareMathOperator{\ch}{char}
\DeclareMathOperator{\Hom}{Hom}
\DeclareMathOperator{\Mod}{Mod}
\DeclareMathOperator{\End}{End}
\DeclareMathOperator{\Ext}{Ext}
\DeclareMathOperator{\Rad}{Rad}
\DeclareMathOperator{\Soc}{Soc}
\DeclareMathOperator{\im}{im}
\DeclareMathOperator{\id}{id}
\DeclareMathOperator{\wt}{wt}
\DeclareMathOperator{\coker}{coker}
\newcommand{\C}{\mathbb{C}}
\newcommand{\F}{\mathbb{F}}
\newcommand{\Z}{\mathbb{Z}}
\newcommand{\N}{\mathbb{N}}
\newcommand{\z}{\ensuremath{\widetilde{z}} }
\newcommand{\calo}{\mathcal{O}}
\newcommand{\calf}{\mathcal{F}}
\newcommand{\hfree}{\ensuremath{\widehat{H}^{free}}\xspace}
\newcommand{\ba}{\ensuremath{{\bf A}_{[\lambda]}} }
\newcommand{\bp}{\ensuremath{{\bf P}_{[\lambda]}} }
\newcommand{\cals}{\mathcal{W}}
\newcommand{\Y}{\ensuremath{\mathcal{YT}} }
\newcommand{\ycat}{\ensuremath{\mathscr{Y}_\blacktriangledown}}
\newcommand{\A}{\mathscr{A}}
\newcommand{\lie}[1]{\ensuremath{\mathfrak{#1}} }
\newcommand{\tangle}[1]{\ensuremath{\langle #1 \rangle}}
\newcommand{\mapdef}[1]{\ensuremath{\overset{#1}{\longrightarrow}}\xspace}
\newcommand{\comment}[1]{}
\newcommand{\kmz}{\ensuremath{\scriptstyle{k}} }
\newcommand{\kmo}{\ensuremath{\scriptscriptstyle{k-1}} }
\newcommand{\kmt}{\ensuremath{\scriptscriptstyle{k-2}} }
\newcommand{\jpo}{\ensuremath{\scriptscriptstyle{j+1}} }
\newcommand{\jpz}{\ensuremath{\scriptstyle{j}} }
\newcommand{\jmo}{\ensuremath{\scriptscriptstyle{j-1}} }
\newcommand{\svd}{\ensuremath{\scriptstyle{\vdots}} }
\newcommand{\sdd}{\ensuremath{\scriptstyle{\ddots}} }
\newcommand{\scd}{\ensuremath{\scriptstyle{\cdots}} }
\newif\iftextstyle
\everydisplay\expandafter{\the\everydisplay\textstylefalse}
\newtheorem{theorem}[equation]{Theorem}
\newtheorem{prop}[equation]{Proposition}
\newtheorem{lemma}[equation]{Lemma}
\newtheorem{cor}[equation]{Corollary}
\newtheorem{utheorem}{\textrm{\textbf{Theorem}} }
\theoremstyle{definition}
\newtheorem{defn}[equation]{Definition}
\newtheorem{stand}[equation]{Assumption}
\newtheorem{remark}[equation]{Remark}
\numberwithin{equation}{section}
\begin{document}
\title{On Category $\mathcal{O}$ over triangular Generalized Weyl
Algebras}
\author{Apoorva Khare}
\address[A.~Khare]{Department of Mathematics, Stanford University}
\email{\tt khare@stanford.edu}

\author{Akaki Tikaradze}
\address[A.~Tikaradze]{Department of Mathematics, University of Toledo}
\email{\tt tikar06@gmail.com}

\date{\today}
\subjclass[2010]{16S37, 18G05, 16D90}
\keywords{Triangular generalized Weyl algebra, Category $\calo$, Verma
module, projective module, tilting module, Koszul algebra, Ext groups,
STYT}

\begin{abstract}
We analyze the BGG Category $\mathcal{O}$ over a large class of
generalized Weyl algebras (henceforth termed GWAs).
Given such a ``triangular'' GWA for which Category $\mathcal{O}$
decomposes into a direct sum of subcategories, we study in detail the
homological properties of blocks with finitely many simples. As
consequences, we show that the endomorphism algebra of a projective
generator of such a block is quasi-hereditary, finite-dimensional, and
graded Koszul. We also classify all tilting modules in the block, as well
as all submodules of all projective and tilting modules. Finally, we
present a novel connection between blocks of triangular GWAs and Young
tableaux, which provides a combinatorial interpretation of morphisms and
extensions between objects of the block.
\end{abstract}
\maketitle

\settocdepth{section}
\tableofcontents

\section{Introduction and main results}\label{S1}

Generalized Weyl Algebras (GWAs) are an important and well-studied class
of algebras in the literature. There is much recent activity on the study
of GWAs, including their existence and consistency, structure, and
representation theory, as well as of special sub-families of GWAs. The
present paper provides a contribution to this area.

Recall \cite{Ba} that a GWA is generated over a ring $R$ (equipped with a
ring automorphism $\theta : R \to R$) by two elements $u,d$ with the
relations: $u r = \theta(r) u, r d = d \theta(r)$ for all $r \in R$, and
$ud = \theta(du) \in Z(R)$. We focus on the case when $R = H[du] = H[ud]$
for a commutative $\F$-algebra $H$ over a field $\F$; in the present
paper, (generalizations of) such algebras will be termed
\textit{triangular GWAs}.
These algebras enjoy several desirable properties, including a triangular
decomposition and an appropriate theory of weights. This allows the
introduction and study of the Bernstein-Gelfand-Gelfand (BGG) Category
$\calo$ over triangular GWAs.
Our goal in this paper is to show that a large amount of homological
information about Category $\calo$ can be obtained in a uniform manner
for all triangular GWAs. Specifically, given a weight $\lambda$ of $H$,
we study the endomorphism algebra $\ba = \End_\calo(\bp)^{op}$ of a
specific projective generator $\bp$ of the corresponding block
$\calo[\lambda]$ of $\calo$. As a first step, a general treatment of
Category $\calo$ can be used to show that when the block has finitely
many simple objects, the algebra $\ba$ is $\Z_+$-graded, associative,
finite-dimensional, and quasi-hereditary.

In this paper we compute all Ext-groups for pairs of simple modules,
Verma modules, or (quotients of) projective modules, as well as the
Ext-groups between these modules, for a general triangular GWA. Our
results yield many desirable homological consequences for blocks of
triangular GWAs. First, we provide a presentation for the algebras $\ba$
and show that the isomorphism class of the algebra depends only on the
(finite) number of simple objects in the block. In particular, this shows
that all (finite) blocks of triangular GWAs with equal numbers of simple
objects are Morita equivalent.

Second, we prove that the algebras $\ba$ are Koszul. Koszulity is an
important structural property for $\Z_+$-graded, quadratic algebras and
has several desirable homological consequences; see
e.g.~\cite{BGS,CS2,PP,Pr} for more on Koszulity and its generalizations.

An additional consequence is a complete description of all tilting
modules in blocks of Category $\calo$, as well as an enumeration of all
submodules of projective or tilting objects in a block. Specifically, we
show that each such submodule is indecomposable and has a Verma flag.

A fourth consequence is an interesting and novel connection to Young-type
tableaux, which to our knowledge has not been explored in the literature.
These tableaux satisfy combinatorial counterparts of our homological
results, as we explain in this paper. In other words, blocks of $\calo$
categorify Young tableaux.

Finally, the complete and explicit descriptions afforded by our
computations make it possible to apply the comprehensive homological
machinery developed by Cline, Parshall, and Scott in their broad program
for highest weight categories. For instance, we show as a corollary of
our results that the blocks of Category $\calo$ satisfy the
\textit{Strong Kazhdan-Lusztig condition (SKL)} as in \cite{CPS2}.

\subsection{Triangular GWAs}

We now develop the notation required to present the main results later in
this section. We begin by introducing the main object of study in the
present paper. For this paper we fix an arbitrary ground field $\F$;
thus, $\dim$ henceforth denotes $\dim_\F$. Also, let $\Z_+$ denote the
set of non-negative integers.

\begin{defn}
Suppose $H$ is an commutative $\F$-algebra with an $\F$-algebra
automorphism $\theta : H \to H$, and elements $z_0 \in H, z_1 \in
H^\times$. The \textit{triangular Generalized Weyl Algebra (triangular
GWA)} associated to this data is defined to be the $\F$-algebra
\begin{equation}\label{Egwareln}
\cals(H,\theta,z_0,z_1) := H \tangle{d,u} / (uh = \theta(h) u,\ hd = d
\theta(h),\ ud = z_0 + d z_1 u\ \forall h \in H).
\end{equation}
\end{defn}

Triangular GWAs are the focus of a concerted research effort in the
literature. A large class of triangular GWAs that has been the focus of
much recent research consists of \textit{down-up algebras}.
These are a family of generalized Weyl algebras that occur in several
different settings, including representation theory, mathematical
physics, Hopf algebras, ring theory, and combinatorics. See
\cite{JZ,Ku1,LeB,Ro,Smi,Ta1,Wi} for these and other motivations.
It turns out that the algebras in the above references have certain
common structure and properties. For instance, they contain elements $d$
and $u$ that should be thought of as ``down" (lowering) and ``up"
(raising) operators. In order to systematically study their behavior,
Benkart and Roby \cite{BR} defined \textit{down-up algebras} and
initiated their study.
Since then, down-up algebras and their variants have been the focus of
tremendous interest - to name a few references, see
\cite{CM,CS1,Jo2,KM,Ku2,LL,Ru,Zha}. Other examples of down-up algebras
have been studied by Woronowicz \cite{Wo}, as well as Kac in the
comprehensive work \cite{Kac1} on Lie superalgebras. We remark that
down-up algebras are a sub-family of triangular GWAs with $H = \F[h]$, a
polynomial algebra; see \cite[Section 8]{Kh2} for more details.

Simultaneously, another area of much recent interest is the study of
various ``quantum" and Hopf-like algebras. These ``quantum" variants are
generated by $u,d$ over the group ring $\F[\Gamma]$ for some group
$\Gamma$. As above, examples have arisen from a variety of settings,
including Kleinian singularities and quantum groups. See
\cite{CBH,JWY,JWZ,Ta2,TX,Zhi} for more references. As above, all of these
``quantum" down-up algebras are triangular GWAs with $H = \F[K^{\pm 1}]$,
a group algebra - see the discussion in \cite[Section 8]{Kh2}. It is also
shown in \textit{loc.~cit.}~that the down-up algebras in the former,
``classical" family, admit quantizations that belong to the latter,
``quantum" family.

Both classical and quantum down-up algebras are special cases of
\textit{ambiskew polynomial rings}, which are the class of triangular
GWAs where $z_1 \in \F^\times$. Ambiskew polynomial rings are the focus
of recent and continuing interest \cite{BM,Ha,Jo1,JW}. Generalized Weyl
algebras can also arise from other constructions. For instance as
explained in \cite[Section 9]{Kh2}, continuous Hecke algebras of $GL(1)$
and $\C \oplus \C^*$ (see \cite{EGG}) are generalized Weyl algebras.
Thus our goal in the present paper is to prove results for general
triangular GWAs, addressing uniformly all of the above examples.

\subsection{Category $\mathcal{O}$}

In order to state the main results in this paper, we now introduce a
sequence $\z_n$ of distinguished elements in a triangular GWA (more
precisely, in its ``Cartan subalgebra" $H$). We also set further
notation.

\begin{defn}\label{Delements}
Fix an $\F$-algebra $H$, elements $z_0 \in H$ and $z_1 \in H^\times$, and
an $\F$-algebra automorphism $\theta : H \to H$.
Now let $A := \cals(H,\theta,z_0,z_1)$ be the algebra defined as in
Equation \eqref{Egwareln}.
\begin{enumerate}
\item Given an integer $n \geq 1$, define
\begin{equation}
z'_n := \prod_{i=0}^{n-1} \theta^i(z_1), \qquad z'_0 := 1, \qquad
\z_n := \sum_{i=0}^{n-1} \theta^i(z_0 z'_{n-1-i}), \qquad \z_0 := 0, \qquad
\z_{-n} := \theta^{-n}(\z_n).
\end{equation}

\item Define a \textit{character} or \textit{weight} of $H$ to be an
$\F$-algebra map $: H \to \F$, and denote the set of weights of $H$ by
$\widehat{H} := \Hom_{\F-alg}(H,\F)$. Now given a weight $\lambda \in
\widehat{H}$, define
\begin{align}
[\lambda] & := \{ \lambda \circ \theta^n : n \in \Z, \lambda(\z_n) = 0
\},\\
\hfree & := \{ \lambda \in \widehat{H} :\ \forall n \in \Z
\setminus \{ 0 \},\ \exists h \in H \mbox{ with } \lambda(h) \neq
(\lambda \circ \theta^n)(h) \}.\notag
\end{align}

\item Given an $H$-module $M$ and $\lambda \in \widehat{H}$, the
\textit{$\lambda$-weight space} of $M$ is $M_\lambda := \{ m \in M :
\ker(\lambda) m = 0 \}$. Now define $\wt M := \{ \lambda \in \widehat{H}
: M_\lambda \neq 0 \}$. We say that $M$ is a \textit{weight module} over
$H$ if $M = \bigoplus_{\lambda \in \widehat{H}} M_\lambda$.

\item Define the \textit{BGG Category $\calo$} over $A$ to be the full
subcategory of all finitely generated $H$-weight $A$-modules, with
finite-dimensional $H$-weight spaces and a locally finite action of $u$.

\item We show in Remark \ref{Rverma} below that Category $\calo$ contains
pairwise non-isomorphic simple objects $L(\lambda)$ for all $\lambda \in
\hfree$. Now given a subset $T \subset \hfree$, define $\calo(T)$ to be
the full subcategory of all objects in $\calo$, each of whose
Jordan-Holder subquotients is $L(\lambda)$ for some $\lambda \in T$. Also
let $\calo_\N$ denote the full subcategory of all finite length objects
in $\calo$, and define $\calo_\N(T) := \calo_\N \cap \calo(T)$. If $T =
[\lambda]$, denote $\calo[\lambda] := \calo([\lambda])$ and
$\calo_\N[\lambda] := \calo_\N([\lambda])$.
\end{enumerate}
\end{defn}

It is then clear that $\Z$ acts on the set of weights $\lambda : H \to
\F$ (and this action is free on the subset $\hfree$), via: $n * \lambda
:= \lambda \circ \theta^{-n}$. This yields a partial order on
$\widehat{H}$, via:
$\lambda < n * \lambda$ for all $n > 0$ and $\lambda \in \hfree$.
Throughout this paper we will use the following (slight) abuse of
notation without further reference: $\lambda - \mu = n \in \Z$ if $n *
\mu = \lambda$ for $\lambda, \mu \in \hfree$. The following identity is
also useful in this setting, and easily verified:
\begin{equation}\label{ES3}
\z_{m+n} = \z_n \theta^n(z'_m) + \theta^n(\z_m),
\qquad \forall n,m \geq 0.
\end{equation}

\noindent From \eqref{ES3}, it follows easily that $[\lambda] = [\mu]$
whenever $\mu \in [\lambda]$.

\begin{remark}
We briefly elaborate on the set $[\lambda]$. It turns out that $\mu \in
[\lambda]$ if and only if either $[M(\lambda) : L(\mu)] > 0$ or $[M(\mu)
: L(\lambda)] = 0$. In other words, if $\lambda > \mu$, then $u \cdot
d^{\lambda - \mu} m_\lambda = 0$. This is akin to a maximal/primitive
vector for the positive nilpotent Lie subalgebra $\lie{n}^+$, for a
semisimple Lie algebra. Another way to view $[\lambda]$, if $z_1 = 1$ and
$z_0 \in \im(\id_H - \theta)$, is as follows: under these assumptions $A$
has a central Casimir operator $\Omega$ (see \cite[Section 8]{Kh2}); then
$[\lambda]$ is precisely the set of weights $\mu \in \Z * \lambda$ for
which the central characters on the Verma modules $M(\mu), M(\lambda)$
coincide.
\end{remark}

\subsection{Main results}

To state our main results, we require the notion of a \textit{Koszul
algebra}, which is a useful homological property in the study of
quadratic algebras \cite{PP}, prominent in representation theory.

\begin{defn}[{\cite[Definition 1.1.2]{BGS}}]
A ring $A$ is said to be \textit{Koszul} if it satisfies the following
conditions:
\begin{enumerate}
\item $A$ is a $\Z_+$-graded ring, with $A_0$ a semisimple subring.

\item The graded left $A$-module $A_0$ admits a graded projective
resolution
\[
\cdots \to P^2 \to P^1 \to P^0 \twoheadrightarrow A_0
\]

\noindent such that $P^i$ is generated by its degree $i$ component, i.e.,
$P^i = A P^i_i\ \forall i \geq 0$.
\end{enumerate}

\noindent Also define, for a $\Z_+$-graded ring $A = \bigoplus_{j \geq 0}
A_j$, its \textit{homological dual} $E(A) := \Ext^\bullet_A(A_0, A_0)$.
\end{defn}

Similarly, one also defines for a quadratic $\F$-algebra $A = T(V) / (Q)$
(with $Q \subset V \otimes V$), its \textit{quadratic dual} $A^! :=
T(V^*) / (Q^\perp)$. Then the following properties of Koszul algebras are
well-known.

\begin{theorem}[{\cite[Section 2]{BGS}}]\label{Tbgs}
Suppose $A$ is a finite-dimensional Koszul algebra over a field $\F$.
Then $A$ is quadratic. Moreover, $E(A)$ is also Koszul, and is isomorphic
as an $\Z_+$-graded $\F$-algebra to $(A^!)^{op}$, whence $E(E(A)) \cong
A$.
\end{theorem}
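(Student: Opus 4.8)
This is the classical Koszul duality theorem of Beilinson--Ginzburg--Soergel, and the plan is to follow the standard route through the Koszul complex. First I would establish quadraticity. Write $V := A_1$ and let $P^\bullet \twoheadrightarrow A_0$ be the minimal graded projective resolution, so that Koszulity gives $P^i = A \otimes_{A_0} P^i_i$ with $P^i_i$ placed in internal degree $i$. Inspecting the low-degree terms: $P^0 = A$; the map $P^1 = A \otimes_{A_0} V \to A$ is multiplication (so $A$ is generated in degree $1$ and $P^1_1 \cong V$), and minimality forces the degree-$1$ part of $\ker(P^1 \to P^0)$ to vanish. Since $P^2$ is generated in degree $2$ and surjects onto that kernel, the space $R := \ker(V \otimes_{A_0} V \to A_2)$ generates the ideal of relations of $A$. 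Hence $A = T_{A_0}(V)/(R)$ is quadratic, and one forms the quadratic dual $A^! = T_{A_0}(V^*)/(R^\perp)$, where $V^*$ is the $A_0$-dual of $V$ and $R^\perp \subseteq V^* \otimes_{A_0} V^*$ is the annihilator of $R$.

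Next I would introduce, for any quadratic $A$, the Koszul complex $K^\bullet(A)$ of graded projective left $A$-modules with
\[
K^i(A) = A \otimes_{A_0} (A^!_i)^{\ast}, \qquad (A^!_i)^{\ast} = \bigcap_{j+k = i-2} V^{\otimes j} \otimes_{A_0} R \otimes_{A_0} V^{\otimes k} \ \subseteq\ V^{\otimes i},
\]
and differential induced by the canonical element of $V \otimes_{A_0} V^*$; one checks that $d^2 = 0$ (using $R^\perp$), that $H^0 = A_0$, and that the differential has entries in $\bigoplus_{j \geq 1} A_j$, so the complex is minimal. The crux is the self-dual equivalence
\[
A \text{ is Koszul} \iff K^\bullet(A) \text{ resolves } A_0 \iff A^! \text{ is Koszul}.
\]
For the first equivalence: if $A$ is Koszul, its minimal resolution is linear, and by uniqueness of minimal resolutions together with a degree-by-degree identification of the syzygies it must coincide with $K^\bullet(A)$; conversely, if $K^\bullet(A)$ is exact it is a linear projective resolution by construction. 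The second equivalence uses that the $A_0$-dual of $K^\bullet(A)$, suitably regraded, is $K^\bullet(A^!)$ (via $(R^\perp)^\perp = R$, i.e.\ $(A^!)^! \cong A$), so that acyclicity transfers from one complex to the other.

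Granting this, the remaining conclusions follow. When $A$ is Koszul, so is $A^!$, and $K^\bullet(A)$ is the minimal projective resolution of $A_0$; hence $\Ext^i_A(A_0,A_0) = H^i(\Hom_A(K^\bullet(A),A_0))$, and minimality kills all induced differentials, giving $\Ext^i_A(A_0,A_0) \cong \Hom_{A_0}((A^!_i)^{\ast},A_0) \cong A^!_i$, concentrated in internal degree $i$. Tracking the Yoneda product identifies the algebra structure as that of $A^!$ with multiplication reversed, i.e.\ $E(A) \cong (A^!)^{op}$ as $\Z_+$-graded $\F$-algebras; in particular $E(A)$ is Koszul. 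Iterating and using the identities $(B^{op})^! \cong (B^!)^{op}$ and $(A^!)^! \cong A$ for quadratic algebras yields
\[
E(E(A)) \cong \big((A^!)^{op}\big)^{!\,op} \cong \big((A^!)^!\big)^{op\,op} \cong A.
\]

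The main obstacle is the middle equivalence above: that Koszulity of $A$ is equivalent to exactness of $K^\bullet(A)$, and hence inherited by $A^!$. This cannot be deduced from the Euler-characteristic identity $H_A(t)\,H_{A^!}(-t) = 1$ alone; one argues by induction on homological degree, either via uniqueness of minimal resolutions as above or, following \cite{BGS}, via the acyclicity of an auxiliary two-sided Koszul complex. Everything else --- quadraticity and the $\Ext$-computation --- is then routine bookkeeping with the minimal resolution.
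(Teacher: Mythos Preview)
Your proposal is a correct sketch of the standard Beilinson--Ginzburg--Soergel argument, and it is precisely the route the paper intends: the statement is quoted directly from \cite[Section~2]{BGS} and no independent proof is given in the paper. In particular, your outline---quadraticity from the linear minimal resolution, construction of the Koszul complex, the equivalence ``$A$ Koszul $\Leftrightarrow$ $K^\bullet(A)$ exact $\Leftrightarrow$ $A^!$ Koszul'' (using finite-dimensionality for the biduality $(R^\perp)^\perp = R$), and the identification $E(A)\cong (A^!)^{op}$ via the Yoneda product on the minimal resolution---matches the proof in \cite{BGS} that the paper is invoking.
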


Using the above notation, it is possible to state the first main result
of this paper.

\comment{
$\spadesuit$ Can we prove this result for \textit{any} uniserial block of
Category $\calo$ for \textit{any} regular triangular algebra? At least,
say, one where $N_+$ is commutative? Or more generally, in which there
exists a root vector $u \in N_+$ such that the Verma subquotients of
projective modules $P(\lambda,l)$ all have highest weight vectors given
by powers of $u$.
}

\begin{utheorem}\label{Tkoszul}
Suppose $\cals(H,\theta,z_0,z_1)$ is a triangular GWA, for which
$\hfree$ is non-empty and Category $\calo$ is finite length.
Suppose $[\lambda]$ is finite for some $\lambda \in \hfree$.
\begin{enumerate}
\item Let $\{ L_i : 1 \leq i \leq n = |[\lambda]| \}$ be the set of
simple objects in $\calo[\lambda]$, and $P_i$ be the projective cover of
$L_i$ in the block. Then $\calo[\lambda]$ is equivalent to $\ba-\Mod$,
where $\ba = \End_\calo(\bigoplus_i P_i)^{op}$ is a $\Z_+$-graded
$\F$-algebra of dimension $1^2 + \cdots + n^2$, which is quasi-hereditary
and Koszul.

\item The $\Ext$-quiver of $\ba$ is the double $\overline{A_n}$ of the
$A_n$-quiver
\[
[1] \to [2] \to \cdots \to [n].
\]

\noindent Label the arrows as $\gamma_i : [i+1] \to [i]$ and $\delta_i :
[i] \to [i+1]$. Then $\ba^{op}$ is isomorphic to the path algebra of the
quiver $\overline{A_n}$ with relations
\begin{equation}\label{Erelations}
\delta_i \circ \gamma_i = \gamma_{i+1} \circ \delta_{i+1} \ \forall
0<i<n-1, \qquad \delta_{n-1} \circ \gamma_{n-1} = 0.
\end{equation}
\end{enumerate}
\end{utheorem}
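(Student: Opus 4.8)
The plan is to work entirely inside the block $\calo[\lambda]$, whose structure for a triangular GWA is governed by Verma modules $M(\mu)$, $\mu \in [\lambda]$, together with the chain of submodule inclusions $d^{\mu-\nu} : M(\mu) \hookrightarrow M(\nu)$ whenever $\nu < \mu$ in $[\lambda]$. The first step is to enumerate the $n = |[\lambda]|$ elements of the block as $\lambda_1 < \lambda_2 < \cdots < \lambda_n$, so that $L_i := L(\lambda_i)$ and the Verma $M(\lambda_i)$ has simple socle $L(\lambda_n)$ and, more generally, a Verma flag with subquotients $M(\lambda_i), M(\lambda_{i+1}), \ldots, M(\lambda_n)$ read off from the submodule chain. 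From the general Category-$\calo$ machinery invoked before the statement, $\ba = \End_\calo(\bigoplus_i P_i)^{op}$ is already known to be $\Z_+$-graded, finite-dimensional, and quasi-hereditary; what remains is (i) to pin down the BGG-reciprocity numbers $[P_i : M(\lambda_j)]$, which here will be $1$ if $j \geq i$ or $j = i-1$ (equivalently the projective $P_i$ has a Verma flag $M(\lambda_{i-1})\text{–}M(\lambda_i)\text{–}M(\lambda_{i+1})\text{–}\cdots$), and $0$ otherwise; this forces $\dim \ba = \sum_i \dim \Hom(P_{\text{tot}}, P_i) = \sum_i (\text{number of Vermas in } P_i \text{ composition series counted with } L_j) = 1^2 + \cdots + n^2$ once one checks each $L_j$ appears with the stated multiplicity across the $P_i$. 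The computation of these Verma flags of projectives is the technical heart and should follow from the $\Ext^1$ vanishing pattern between Vermas, which in turn comes from the explicit GWA module structure and the identity \eqref{ES3}.

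The second step is the quiver and relations. I would compute $\Ext^1_\calo(L_i, L_j)$ directly: a nonsplit extension of $L_j$ by $L_i$ exists in the block precisely when $|i-j| = 1$, with a one-dimensional $\Ext^1$ in each such case — this is visible either from the two-step Verma submodule structure ($M(\lambda_i)/(\text{rad}^2)$ has head $L_i$ and a single $L_{i+1}$ below, giving $\gamma$'s, while duality or the structure of $P_i$ gives $\delta$'s) or from the short exact sequences $0 \to M(\lambda_{i+1}) \to M(\lambda_i) \to L(\lambda_i) \to 0$. This yields the $\Ext$-quiver $\overline{A_n}$ with arrows $\gamma_i : [i{+}1] \to [i]$ and $\delta_i : [i] \to [i{+}1]$, all placed in degree $1$ since the grading is Koszul-compatible. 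To extract the relations I would examine $\Ext^2_\calo(L_i, L_j)$: the quadratic dual / the relation ideal is spanned by those degree-$2$ paths that die in $\ba^{op}$, and the dimension count $\dim \ba = \sum i^2$ together with the quiver already nearly forces the answer. Concretely, I would show $\delta_{n-1}\gamma_{n-1} = 0$ because the Verma $M(\lambda_n)$ is simple (it is the minimal element, with no proper submodule, hence $P_n$ has Verma flag $M(\lambda_{n-1})\text{–}M(\lambda_n)$ only and the relevant composite vanishes), and $\delta_i\gamma_i = \gamma_{i+1}\delta_{i+1}$ (the two length-$2$ paths $[i] \to [i{+}1] \to [i]$ and $[i] \to [i{+}1] \to [i{+}2] \to [i{+}1] \to [i]$... — more precisely the two paths $[i{+}1]\to[i]\to[i{+}1]$ versus $[i{+}1]\to[i{+}2]\to[i{+}1]$) agree because both compute the unique (up to scalar) endomorphism of $P_{i+1}$ factoring through the radical, detectable from the Verma-flag structure of $P_{i+1}$, which has exactly two distinct composition factors isomorphic to $L_{i+1}$.

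The third step is Koszulity. Having the presentation of $\ba^{op}$ by the quiver $\overline{A_n}$ with the quadratic relations \eqref{Erelations}, I would either (a) cite that this is the well-known \emph{zigzag}/preprojective-type algebra of type $A_n$, whose Koszulity is classical, or (b) give a self-contained argument: construct the linear projective resolution of each simple $L_i$ over $\ba$ explicitly (the resolution of $L_i$ has $P^k$ a sum of $P_j$'s with $|i-j| \leq k$ and $j \equiv i+k \pmod 2$ near the ends, truncating at length $\leq n-1$), verify $P^k$ is generated in degree $k$, and invoke Theorem \ref{Tbgs} to conclude. The quasi-hereditary claim in part (1) is already granted by the general theory, but I would additionally note that the Verma modules are the standard objects and the explicit Verma flags of the $P_i$ re-derive it. The main obstacle I anticipate is Step 1: proving that the projective cover $P_i$ has precisely the Verma flag $M(\lambda_{i-1})\text{–}M(\lambda_i)\text{–}\cdots\text{–}M(\lambda_n)$ — i.e. that no ``extra'' Verma subquotients appear — which requires controlling $\Ext^1$ between non-adjacent Vermas and using the locally-finite-$u$ condition together with the GWA relation $ud = z_0 + dz_1u$ and the definition of $[\lambda]$ via $\lambda(\z_n) = 0$ to rule out further extensions. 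Once the Verma flags of projectives are nailed down, the quiver, relations, dimension, and Koszulity all follow by comparatively routine homological bookkeeping.
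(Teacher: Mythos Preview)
Your overall architecture (Verma flags of projectives $\Rightarrow$ $\Ext$-quiver $\Rightarrow$ relations $\Rightarrow$ Koszulity) is the right shape, but Step~1 is built on an incorrect picture of the block, and this contaminates everything downstream. You write $d^{\mu-\nu}: M(\mu)\hookrightarrow M(\nu)$ for $\nu<\mu$ and say $M(\lambda_i)$ has socle $L(\lambda_n)$; in fact $d$ lowers weight, so it is $M(\nu)\hookrightarrow M(\mu)$ for $\nu<\mu$, and the socle of every $M(\lambda_i)$ is $L(\lambda_1)$ (the simple Verma is $M(\lambda_1)$, not $M(\lambda_n)$). More seriously, your claimed Verma flag $[P_i:M(\lambda_j)]=1$ for $j\geq i$ \emph{or} $j=i-1$ is wrong under either convention. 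BGG reciprocity gives $[P_i:M_j]=[M_j:L_i]$, and since $M_j$ has composition factors exactly $L_1,\dots,L_j$, one gets $[P_i:M_j]=\mathbf{1}(j\geq i)$ with no extra $M_{i-1}$. In particular the projective at the minimal weight (your $P_n$, the paper's $P_1$) has a Verma flag of length $n$, not $2$; your assertion that ``$P_n$ has Verma flag $M(\lambda_{n-1})\text{--}M(\lambda_n)$ only'' is false for $n\geq 3$, and your justification of the relation $\delta_{n-1}\gamma_{n-1}=0$ via this two-term flag does not go through. The dimension count $\sum i^2$ happens to survive because it is determined by BGG reciprocity, but your route to it via the wrong flags would not.

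Once the Verma flags are corrected to $P_i\supset P_{i+1}\supset\cdots\supset P_n\supset 0$ with subquotients $M_i,\dots,M_n$ (this is Proposition~\ref{Pproj}), the paper's argument diverges from yours in two further places. For the relations, rather than computing $\Ext^2$ and reading off a quadratic dual, the paper constructs an explicit $\Z_+$-graded basis $\{\varphi^{(t)}_{(r,s),(j,k)}\}$ of the larger algebra $\widetilde{\ba}$ (Proposition~\ref{Pprojmaps}), proves a closed composition formula \eqref{Egrading}, and then obtains $\ba^{op}\cong Q_\lambda$ by a surjection-plus-dimension-count. For Koszulity, the paper does not build linear resolutions; it verifies the numerical criterion $H(\ba,t)=H(E(\ba),-t)^{-1}$ of \cite[Theorem~2.11.1]{BGS}, using the $\Ext$ computation of Corollary~\ref{Cext} on one side and the explicit graded basis on the other. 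Your plan~(b) of exhibiting linear resolutions would also work once the projective structure is fixed, but it is not the paper's method.
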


\noindent Thus at its heart, Category $\calo$ over every triangular GWA
(with commutative $H$) is governed by a distinguished family of
finite-dimensional Koszul algebras $\ba$, which may be denoted by $\A_n$
to denote their dependence only on the integer $n = |[\lambda]| \geq 1$.
In particular, all finite blocks of Category $\calo$ over any triangular
GWA, having exactly $n$ simple objects, are Morita equivalent to
finite-dimensional $\A_n$-modules. We also remark that the algebras
$\A_n$ have connections to other settings in representation theory; see
Remark \ref{Rkoszul} for more details.\medskip

Note that Theorem \ref{Tkoszul} holds for a very large class of
generalized Weyl algebras. For instance, it has the following consequence
that applies to a large class of algebras described above in this
section.

\begin{cor}\label{Ckoszul}
Suppose $\ch \F = 0$, and $A = \cals(\F[h],\theta,s^{-1} f(h), s^{-1})$ is
a generalized down-up algebra with $r = 1$, $\gamma \in \F^\times$, and
$0 \not\equiv f \in \F[h]$. Also suppose that $s=1$ or $s$ is not a root
of unity. Then:
\begin{enumerate}
\item Category $\calo$ over $A$ has a block decomposition into summands
$\calo[\lambda]$.
\item Every block $\calo[\lambda]$ contains only finitely many
non-isomorphic simple objects.
\item For each block $\calo[\lambda]$, the corresponding algebra $\ba$ is
Koszul.
\end{enumerate}

\noindent Similar results also hold for ``quantum" analogues of such
algebras (mentioned above). Namely, suppose
\[
\ch \F = 0, \qquad \Gamma = \tangle{K,K^{-1}} \cong \Z, \qquad \theta(K)
= qK, \qquad z_1 \in \F^\times,
\]

\noindent with $q \in \F^\times$ not a root of unity. Also suppose that
$z_0 \in \F[K^{\pm 1}]$ is not of the form $b K^n$ for any $b \in \F, n
\in \Z$. Then the three assertions above (in this corollary) hold for $A
= \cals(H,\theta,z_0,z_1)$.
\end{cor}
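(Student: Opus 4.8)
The plan is to deduce Corollary~\ref{Ckoszul} from Theorem~\ref{Tkoszul} by checking its hypotheses for $A=\cals(\F[h],\theta,s^{-1}f(h),s^{-1})$, and likewise --- with $\F[h]$ replaced by $\F[K^{\pm1}]$ --- for the quantum analogue. Concretely, I will verify: (i) $\hfree\ne\emptyset$; (ii) Category~$\calo$ is finite length; and (iii) $[\lambda]$ is finite for \emph{every} $\lambda\in\hfree$. Granting these, conclusion~(1) is the block decomposition $\calo=\bigoplus_{[\lambda]}\calo[\lambda]$, which holds for any triangular GWA once $\calo$ is finite length; conclusion~(2) is (iii); and conclusion~(3) follows by applying Theorem~\ref{Tkoszul} to each block $\calo[\lambda]$. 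Since the reductions are identical in the ``quantum'' case, I will only indicate the parallel computations there.

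\textbf{Step 1 (non-emptiness of $\hfree$).} One analyzes the $\Z$-action $n*\lambda=\lambda\circ\theta^{-n}$ on $\widehat H$. Classically $\widehat{\F[h]}\cong\F$ via $\lambda\mapsto\lambda(h)$, and $\theta$ acts as an affine automorphism $t\mapsto st+c$ of $\F$ whose translation part $c$ is a nonzero multiple of $\gamma$ when $s=1$ (see \cite[Section 8]{Kh2}). When $s=1$ this forces $c\ne0$, so every orbit is infinite (as $\ch\F=0$) and $\hfree=\widehat H$; when $s$ is not a root of unity there is a unique fixed point $t_0$ and every other orbit is infinite, so $\hfree=\{\lambda:\lambda(h)\ne t_0\}$. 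In the quantum case $\widehat{\F[K^{\pm1}]}\cong\F^\times$ and $\theta$ acts by multiplication by a power of $q^{-1}$, which has no finite orbit since $q$ is not a root of unity, so $\hfree=\widehat H$. In all cases $\hfree\ne\emptyset$.

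\textbf{Step 2 (finiteness of $[\lambda]$).} Since $\Z$ acts freely on $\hfree$, this amounts to showing $S_\lambda:=\{n\in\Z:\lambda(\z_n)=0\}$ is finite. As $z_1\in\F^\times$ is central, Definition~\ref{Delements} gives $z'_k=z_1^k$ and hence, for $n\ge0$, $\z_n=z_1^{n-1}\sum_{i=0}^{n-1}z_1^{-i}\,\theta^i(z_0)$; applying $\lambda$ and clearing the unit $z_1^{n-1}$ (and, for $n<0$, first using $\z_{-n}=\theta^{-n}(\z_n)$ to reduce to the previous case applied to $\lambda\circ\theta^{-n}$) exhibits $\lambda(\z_n)$, up to an invertible factor, as a finite $\F$-linear combination of geometric sums $\sum_{i=0}^{n-1}\rho^i$. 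The participating bases $\rho$ are powers $s^{j}$ (coming from expanding $f$ along the $\theta$-orbit of $\lambda(h)$, together with $z_1=s^{-1}$) in the classical case, and ratios $q^m/z_1$ over the monomials $K^m$ of $z_0$ in the quantum case; a base equal to $1$ merely replaces the corresponding geometric sum by a term linear in $n$. The stated hypotheses are exactly what guarantee that this exponential polynomial in $n$ is non-degenerate: when $s=1$ it is an honest polynomial of degree $\deg f+1$ with leading coefficient a nonzero multiple of $\gamma^{\deg f}$ (one may instead invoke the central Casimir of \cite[Section 8]{Kh2}, which exists since $\theta$ is a nonzero translation, whence $f(h)\in\im(\id_H-\theta)$, and observe that $[\lambda]$ is then the intersection of a level set of a non-constant polynomial with an arithmetic progression); and otherwise the participating bases are distinct, not all equal to $1$, and pairwise ``incommensurable'' --- their ratios are nontrivial powers of $s$ (resp.\ of $q$), hence, $s$ (resp.\ $q$) not being a root of unity, not roots of unity --- while the leading coefficient is nonzero because $f\not\equiv0$ (resp.\ because $z_0$ is not a monomial). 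A nonzero exponential polynomial over a field of characteristic $0$ whose characteristic roots have this incommensurability property has only finitely many integer zeros: there is no infinite arithmetic progression of zeros, by Skolem--Mahler--Lech (or directly, by a Vandermonde argument after restricting to such a progression). Hence $S_\lambda$, and therefore $[\lambda]$, is finite for every $\lambda\in\hfree$.

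\textbf{Step 3 (finite length and conclusion).} Finiteness of all $[\lambda]$ forces every Verma module $M(\lambda)$ to have finite length --- its composition factors being the $L(\mu)$ with $\mu\in[\lambda]$ and $\mu\le\lambda$ (cf.\ the discussion after Definition~\ref{Delements}) --- whence Category~$\calo$ is finite length. Conclusion~(1) now follows from the block decomposition, (2) is Step~2, and (3) is Theorem~\ref{Tkoszul} applied to each $\calo[\lambda]$. The main obstacle is Step~2: turning $\lambda(\z_n)$ into an explicit exponential polynomial and then using precisely the hypotheses on $(f,\gamma)$ (resp.\ on $z_0$ and $q$) to exclude an infinite arithmetic progression of zeros. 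Everything else is bookkeeping or a direct appeal to Theorem~\ref{Tkoszul}.
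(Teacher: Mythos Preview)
The paper does not give an explicit proof of Corollary~\ref{Ckoszul}; it is stated as an immediate consequence of Theorem~\ref{Tkoszul}, with the verification that the specific algebras satisfy the hypotheses (non-emptiness of $\hfree$, finite length of $\calo$, and finiteness of each $[\lambda]$) deferred to \cite[Section 8]{Kh2}. Your approach is therefore exactly the intended one: check those hypotheses and invoke Theorem~\ref{Tkoszul}.

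Your verification is correct. A few remarks on Step~2, which is indeed the only substantive step. In the classical case with $s\neq 1$, your exponential-polynomial analysis is clean: the bases $s^{j+1}$ (for $0\le j\le\deg f$) and $1$ have pairwise ratios that are nontrivial powers of $s$, and the top coefficient is nonzero because both $f\not\equiv 0$ \emph{and} $\lambda(h)\ne t_0$ (the fixed point), the latter being guaranteed by $\lambda\in\hfree$; you use this implicitly. In the quantum case, the phrase ``leading coefficient nonzero because $z_0$ is not a monomial'' is slightly imprecise: the actual mechanism is that on any arithmetic progression at most one of the bases $q^m/z_1$ can collapse to $1$ (since $q$ is not a root of unity), so vanishing on that progression would force all remaining $b_m$ to be zero and hence $z_0$ to be a monomial --- which is precisely the excluded case. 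Your conclusion stands, but the argument is really ``not a monomial $\Rightarrow$ at least two genuinely independent exponential terms survive on every arithmetic progression''.

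The appeal to Skolem--Mahler--Lech is valid over any field of characteristic zero and is a convenient hammer here; one could also argue more directly via a Vandermonde determinant after restricting to a putative arithmetic progression of zeros, as you note parenthetically. The reference \cite{Kh2} presumably carries out these computations explicitly for each family, but your uniform treatment via exponential polynomials is a perfectly good substitute.
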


In particular, Theorem \ref{Tkoszul} holds for Smith's family of algebras
\cite{Smi} with $[x,y] = f(h) \neq 0 = \ch \F$, as well as for the
``quantized version" of Smith's algebras studied by Ji et al.~\cite{JWZ}
and Tang \cite{Ta2}, as long as $q$ is not a root of unity and $z_0
\notin \bigcup_{n \in \Z} \F K^n$.\medskip

We also remark that Theorem \ref{Tkoszul} can be proved for an even
larger class of algebras with triangular decomposition. See Remark
\ref{Rtgwa}.

The heart of the proof of Theorem \ref{Tkoszul} involves homological
calculations in Category $\calo$ over a triangular GWA. This leads to our
next main result.

\begin{utheorem}\label{Thom}
(Setting as in Theorem \ref{Tkoszul}.)
Suppose $[\lambda] = \{ \lambda_1 < \lambda_2 < \cdots < \lambda_n \}
\subset \hfree$. Then for all $1 \leq i,j \leq n$ and $l>0$,
\begin{equation}
\Ext^l_\calo(L(\lambda_i), L(\lambda_j)) =
  \begin{cases}
    \F,	&\text{if $|i-j| = l = 0$;}\\
    \F,	&\text{if $|i-j| = l = 1$;}\\
    \F,	&\text{if $i=j \neq 1$ and $l=2$;}\\
    0,	&\text{otherwise.}
  \end{cases}
\end{equation}

\noindent Now define for $1 \leq i \leq n$:
\begin{equation}\label{Enotation}
L_i := L(\lambda_i), \qquad M_i := M(\lambda_i), \qquad P_i :=
P(\lambda_i), \qquad P_{n+1} := 0 =: L_0,
\end{equation}

\noindent where $P(\lambda)$ denotes the projective cover of $L(\lambda)$
in $\calo[\lambda]$, and $M(\lambda) = A / (A \cdot u + A \cdot \ker
\lambda)$ is the ``Verma module" of highest weight $\lambda$.
Then for all $1 \leq i \leq n$, $M_i$ has a finite filtration
\[
M_i \supset M_{i-1} \supset \cdots \supset M_1 \supset 0 =: M_0,
\]

\noindent with successive subquotients $L_j$ for $1 \leq j \leq i$.
Similarly, every $P_i$ has a ``Verma flag"
\[
P_i \supset P_{i+1} \supset \cdots \supset P_n \supset 0,
\]

\noindent with successive subquotients $M_j$ for $i \leq j \leq n$.
Moreover, for all $1 \leq j < k \leq n+1$ and $0 \leq s < r \leq n$,
defining ${\bf 1}(E)$ for a mathematical condition $E$ to be $1$ when the
condition $E$ holds, and $0$ otherwise, we have:
\begin{align}\label{Eext}
\dim \Ext^l_\calo(M_r, P_j / P_k) = &\ \delta_{l,0} {\bf 1}(r < k) +
\delta_{l,1} {\bf 1}(r < j),\\
\dim \Ext^l_\calo(P_j / P_k, M_r / M_s) = &\ \delta_{l,0} {\bf 1}(s < j
\leq r) + \delta_{l,1} {\bf 1}(s < k \leq r).\notag
\end{align}
\end{utheorem}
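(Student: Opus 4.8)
The plan is to build everything by induction on $n = |[\lambda]|$, starting from the structure of Verma modules and bootstrapping up to projectives. First I would analyze a single Verma module $M_i = M(\lambda_i)$ directly from the presentation $M(\lambda) = A/(Au + A\ker\lambda)$: as an $H$-weight module it is spanned by $d^k m_\lambda$ in weight $\lambda_{i} - k$ (with the usual abuse of notation), and the action of $u$ on $d^k m_\lambda$ is governed by the scalar $\lambda_i(\z_k)$ coming from the relation $ud = z_0 + dz_1u$ and identity \eqref{ES3}. The key combinatorial input is that $\mu \in [\lambda]$ precisely records where these scalars vanish (as flagged in Remark \ref{Rverma}); hence $M_i$ has a submodule generated by $d^{\lambda_i - \lambda_{i-1}} m_{\lambda_i}$ which is isomorphic to $M_{i-1}$, and the quotient is $L_i$. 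Iterating gives the asserted filtration of $M_i$ with subquotients $L_1, \dots, L_i$, each appearing once; in particular $[M_i : L_j] = \mathbf{1}(j \le i)$, so $M_i$ is multiplicity-free with simple top $L_i$. This also yields $\Hom_\calo(M_i, M_j) = \F \cdot \mathbf{1}(i \le j)$ and $\Ext^1_\calo(M_i, L_j) = \F\cdot\mathbf{1}(j = i-1)$ by a short argument using the short exact sequence $0 \to M_{i-1} \to M_i \to L_i \to 0$ and dimension-shifting.

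Next I would construct the projectives. Using BGG reciprocity (valid in any highest weight category, which $\calo[\lambda]$ is by the general theory invoked before Theorem \ref{Tkoszul}), the projective cover $P_i = P(\lambda_i)$ has a Verma flag with $(P_i : M_j) = [M_j : L_i] = \mathbf{1}(i \le j)$. To pin down the \emph{order} of the flag I would argue that $P_i$ has top $L_i$, so its maximal Verma quotient is $M_i$, giving a surjection $P_i \twoheadrightarrow M_i$; its kernel is projective-like with a Verma flag whose top Verma is $M_{i+1}$ (since the next weight up that can extend $M_i$ nontrivially is $\lambda_{i+1}$, again from the $[\lambda]$ analysis), and recursion produces the chain $P_i \supset P_{i+1} \supset \cdots \supset P_n \supset 0$ with subquotients $M_i, M_{i+1}, \dots, M_n$. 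Concretely one realizes $P_i$ inside a "big" module (e.g.\ $A \otimes_{H\langle u\rangle} \F_{\lambda_i}$ truncated to the block, or a suitable $M(\lambda)$ for large $\lambda$) so that the inclusions $P_{i+1} \hookrightarrow P_i$ are literally submodule inclusions realized by multiplication by powers of $d$.

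With both filtrations in hand, the Ext computations \eqref{Eext} follow by a double induction using the long exact sequences attached to $0 \to P_k \to P_j \to P_j/P_k \to 0$ (in the first variable) and $0 \to M_s \to M_r \to M_r/M_s \to 0$ together with $0 \to M_{r-1} \to M_r \to L_r \to 0$ (in the second variable), reducing everything to the base cases $\Ext^\bullet_\calo(M_r, M_j)$, $\Ext^\bullet_\calo(M_r, L_j)$, and $\Ext^\bullet_\calo(L_i, L_j)$. Since each $P_i$ is projective, $\Ext^{\ge 1}_\calo(P_i, -) = 0$, which both anchors the induction and forces the short answers (degrees $0$ and $1$ only) in \eqref{Eext}. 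Finally the simple-to-simple table is read off from the projective resolution of $L_j$ obtained by splicing the two-step resolutions $0 \to P_{\ge} \to \cdots$: explicitly $L_j$ has projective resolution $\cdots \to P_{j-1}\oplus P_{j+1} \to P_j \to L_j$ for $1 < j < n$ (with the appropriate truncations at the ends), which is linear, simultaneously proving the Koszul-type vanishing pattern and feeding directly into Theorem \ref{Tkoszul}. The main obstacle I anticipate is the careful bookkeeping in the inductive step for the projectives — establishing that the radical filtration and the Verma flag of $P_i$ interlock exactly as claimed, so that the submodule $P_{i+1} \subset P_i$ is well-defined and the extensions are nonsplit in precisely the predicted degrees; everything else is linear-algebra-flavored dimension counting once the scalars $\lambda_i(\z_k)$ are understood.
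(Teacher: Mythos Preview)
Your overall architecture---Verma filtrations from the scalars $\lambda_i(\z_k)$, BGG reciprocity for the Verma flag of $P_i$, then long exact sequences from $0 \to P_{k} \to P_j \to P_j/P_k \to 0$ and the projective resolution $0 \to P_{r+1} \to P_r \to M_r \to 0$---matches the paper's. The second line of \eqref{Eext} and the simple--simple $\Ext$ table do fall out by the dimension counting you describe, once the projective resolution $0 \to P_{s+1} \to P_s \oplus P_{r+1} \to P_r \to M_r/M_s \to 0$ is in hand.

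However, there is one step that is \emph{not} pure bookkeeping and which your sketch elides: proving $\dim\Hom_\calo(M_r, P_j) = 1$ for all $r,j$ (equivalently, the $l=0$ case of the first line of \eqref{Eext} with $k = n+1$). The long exact sequences you invoke only determine the Euler characteristic $\dim\Hom - \dim\Ext^1$, not the two terms separately; when $r < j$ both $\Hom_\calo(P_r,P_j)$ and $\Hom_\calo(P_{r+1},P_j)$ have the same dimension $n+1-j$, so the kernel and cokernel of the restriction map could a priori each have any common dimension, and reducing to the ``base case'' $\Ext^\bullet_\calo(M_r,M_j)$ runs into the same indeterminacy in the connecting map. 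The paper breaks this ambiguity by a genuinely algebraic argument: it shows that each $P(\lambda,l)$ has at most one maximal vector of any given weight, by observing (Proposition~\ref{Pakaki}) that the weight space $P(\lambda,l)_\mu$ is a finite-dimensional quotient of a rank-one $H[du]$-module, whence $\dim\ker(u|_{P(\lambda,l)_\mu}) \le \dim\ker(du|_{P(\lambda,l)_\mu}) \le 1$. This is exactly the place where the triangular-GWA structure (and not just the abstract highest-weight-category axioms) enters, and without it the induction does not close. (Minor slip: you write $\Ext^1_\calo(M_i,L_j) = \F\cdot\mathbf{1}(j = i-1)$, but the correct answer is $\mathbf{1}(j = i+1)$: a nonsplit extension $0 \to L_j \to E \to M_i \to 0$ must place $L_j$ one step \emph{above} $\lambda_i$, in line with $0 \to P_{i+1} \to P_i \to M_i \to 0$.)
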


Theorem \ref{Thom} summarizes important homological information in the
block $\calo[\lambda]$. For instance, in the special case $k=j+1$,
Theorem \ref{Thom} computes all $\Ext$-groups between Verma modules and
highest weight modules.

Recall that the definition of Koszulity involves the Ext-algebra
$E(\ba)$. Our next main result involves understanding the structure of
$E(\widetilde{\ba})$, where $\widetilde{\ba}$ is the larger algebra
given by
\begin{equation}
\widetilde{\ba}^{op} := \End_\calo \bigoplus_{1 \leq r < s \leq n+1} P_r
/ P_s.
\end{equation}

\noindent In turn, this enables a detailed analysis of projective objects
in the highest weight category $\calo[\lambda]$, as well as a complete
classification of indecomposable injective and tilting modules (i.e.,
modules that have both a Verma flag as well as a dual Verma flag).

\begin{utheorem}\label{Tproj}
Setting as in Theorems \ref{Tkoszul} and \ref{Thom}.
\begin{enumerate}
\item Fix integers $1 \leq j < k \leq n+1$ and $1 \leq r < s \leq
n+1$. Then,
\begin{equation}\label{Eproj-dim}
\dim \Ext^l_\calo(P_r / P_s, P_j / P_k) =
  \begin{cases}
    \mathbf{1}(r<k) \min(s-r,k-r,k-j),	&\text{if $l = 0$;}\\
    \mathbf{1}(r \leq j) \mathbf{1}(s \leq k) (\min(0,j-s) +
    \min(s-r,k-j)), &\text{if $l = 1$;}\\
    0,	&\text{otherwise.}
  \end{cases}
\end{equation}

\item Given $1 \leq r < s \leq n+1$, there exists a bijection between the
submodules of $P_r / P_s$, and strictly decreasing sequences of integers
$s-1 \geq m_l > m_{l-1} > \cdots > m_1 \geq 1$, for some $0 \leq l \leq
s-r$.
Every such submodule is indecomposable and has a Verma flag, and the
number of these submodules is $\displaystyle \sum_{l=0}^{s-r}
\binom{s-1}{l}$.

\item The partial/indecomposable tilting modules in the block
$\calo[\lambda]$ are precisely $T_k := P_1 / P_{k+1}$ for $1 \leq k \leq
n$. Each of these modules is self-dual.
In particular, the injective hull in the block $\calo[\lambda]$ of the
simple module $L_k$ is equal to $F(P_k) \cong T_n / T_{k-1}$, where we
set $T_0 := 0$.
\end{enumerate}
\end{utheorem}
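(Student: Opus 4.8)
The plan is to bootstrap everything from Theorems \ref{Tkoszul} and \ref{Thom}, which already give us a complete homological grip on the block: all $\Ext$-groups among the $L_i$, the Verma filtrations of the $M_i$ and $P_i$, and—crucially—the $\Ext$-groups $\Ext^l_\calo(M_r, P_j/P_k)$ and $\Ext^l_\calo(P_j/P_k, M_r/M_s)$ in \eqref{Eext}. For part (1), I would compute $\Ext^l_\calo(P_r/P_s, P_j/P_k)$ by dévissage on the Verma flag of the \emph{source}: applying $\Hom_\calo(-, P_j/P_k)$ to the short exact sequences coming from $P_r \supset P_{r+1} \supset \cdots \supset P_s$, and using that $\Ext^{>1}_\calo(M_a, P_j/P_k) = 0$ from \eqref{Eext}, the long exact sequences collapse so that the Euler characteristic and each $\Ext^l$ can be read off. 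Concretely, $\Ext^l_\calo(P_r/P_s, P_j/P_k)$ is assembled from the contributions $\delta_{l,0}\mathbf{1}(a<k) + \delta_{l,1}\mathbf{1}(a<j)$ as $a$ ranges over $r \le a < s$, corrected by the connecting maps; one then checks that the resulting alternating count matches the stated $\min$-formulas. The bookkeeping—verifying that the connecting homomorphisms are exactly as forced and that no unexpected cancellation or survival occurs—is the main nuisance here, but it is purely a matter of tracking the two indicator functions through a triangular filtration of length $s-r$.

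For part (2), I would classify submodules of $P_r/P_s$ directly from the Verma flag $P_r/P_s \supset P_{r+1}/P_s \supset \cdots \supset P_{s-1}/P_s$ together with the Verma filtration $M_i \supset M_{i-1} \supset \cdots$ of each subquotient $M_i \cong P_i/P_{i+1}$. Since every Verma $M_i$ is uniserial with socle filtration given by the $L_j$ ($1\le j\le i$) as in Theorem \ref{Thom}, and since by \eqref{Eext} there are no extensions allowing a submodule to "split" a Verma across the flag except in the single controlled way, a submodule is determined by choosing, for a decreasing subset of indices $m_l > \cdots > m_1$ in $\{1,\dots,s-1\}$, the corresponding nested string of Verma-flag pieces; the length-$l$ constraint $0\le l\le s-r$ reflects that the Verma flag of $P_r/P_s$ has exactly $s-r$ steps. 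Indecomposability follows because each such submodule again has a Verma flag whose top is a single Verma (its "cyclic" generator), and a module with a Verma flag with multiplicity-one top and the given $\Ext^1$-vanishing cannot decompose; the count $\sum_{l=0}^{s-r}\binom{s-1}{l}$ is then immediate. The subtle point is proving completeness of the list—that \emph{every} submodule arises this way—which I would handle by induction on $s-r$, peeling off the bottom Verma $M_r$ (a submodule of $P_r/P_s$) and intersecting an arbitrary submodule with it.

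For part (3), recall a tilting module is one with both a Verma flag and a dual Verma flag. From part (2) the modules with a Verma flag that are submodules of some $P_r/P_s$ are enumerated; the ones that are also injective objects in $\calo[\lambda]$ (equivalently have a dual Verma flag, by the highest-weight-category duality) must, by the BGG reciprocity forced by quasi-heredity, have a very rigid structure. I would show $T_k := P_1/P_{k+1}$ has a Verma flag with subquotients $M_1,\dots,M_k$ (from the flag of $P_1$) and is self-dual by exhibiting a nondegenerate contravariant form, or by showing $T_k \cong F(P_k)$ where $F$ is the duality of Theorem \ref{Thom}'s setup—so it automatically has a dual Verma flag too, hence is tilting; conversely any indecomposable tilting module has simple top and simple socle equal to the same $L_i$ (by self-duality) and, being a quotient of a projective with a Verma flag, must be one of the $P_1/P_{k+1}$ by the submodule classification applied to $P_1$. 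Finally, $F(P_k) \cong T_n/T_{k-1} = P_1/P_{k+1}$ divided by $P_1/P_k$, i.e.\ $P_k/P_{k+1}\hookrightarrow\cdots$—matching the statement—gives the injective hull of $L_k$ because $P_k$ is the projective cover of $L_k$ and $F$ swaps projective covers with injective hulls. The main obstacle in part (3) is cleanly identifying the self-duality: I expect the quickest route is to check that the contravariant duality $F$ fixes each $P_1/P_{k+1}$ on the level of Verma flags (using that $F$ sends $M_i \mapsto M_i^\vee$ and the flag of $P_1/P_{k+1}$ is "symmetric" in the appropriate sense), rather than constructing a form by hand.
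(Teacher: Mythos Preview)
Your proposal has the right overall shape but contains concrete errors in parts~(2) and~(3), and a genuine gap in part~(1). In part~(2), $M_r$ is the top \emph{quotient} of $P_r/P_s$, not a submodule; the bottom Verma submodule is $M_{s-1}=P_{s-1}/P_s$. More importantly, what forces the sequence $(m_i)$ to be strictly decreasing is a structural lemma you do not have (Lemma~\ref{Lverma} in the paper): any element of $P_r/P_k$ whose image in a Verma subquotient $M_j$ lies in $M_s\setminus M_{s-1}$ generates a submodule containing all of $P_s/P_{s+k-j}$. Without this, the bijection is not well-defined. Indecomposability is also simpler than your argument suggests: every nonzero submodule contains the unique simple socle $L_1$ of $P_r/P_s$, so cannot split. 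In part~(3), the claim $T_k \cong F(P_k)$ is false (e.g.\ for $n\ge 2$, $k=1$: $T_1=M_1=L_1$ while $F(P_1)$ is the injective hull of $L_1$, of length $\binom{n+1}{2}$), so this route to self-duality collapses. The paper instead shows directly that $T_k=P_1/P_{k+1}$ has a dual Verma flag, by iterating the embedding $f^{++}_{r,s}\colon P_r/P_s \hookrightarrow P_{r+1}/P_{s+1}$ with cokernel $F(M_s/M_r)$ (Proposition~\ref{Pses}); self-duality then follows \emph{a posteriori} from the uniqueness of indecomposable tiltings with a given highest weight. Your reading of $T_n/T_{k-1}$ is also off: $T_{k-1}$ sits in $T_n=P_1$ as a submodule via these same embeddings, not as a quotient.

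For part~(1), your d\'evissage through the Verma flag would require knowing the connecting maps $\Hom_\calo(P_{r+1}/P_s, P_j/P_k) \to \Ext^1_\calo(M_r, P_j/P_k)$, and you do not say how to determine them; this is not automatic. The paper takes a different route: it first builds an explicit $\F$-basis $\{\varphi^{(t)}_{(r,s),(j,k)}\}$ of $\Hom_\calo(P_r/P_s, P_j/P_k)$ out of the maps $f^{++}$, $f^{-\bullet}$, $f^{\bullet-}$ (Proposition~\ref{Pprojmaps}), and then applies $\Hom_\calo(-, P_j/P_k)$ to the single short exact sequence $0 \to P_s \to P_r \to P_r/P_s \to 0$ with projective outer terms, so that $\Ext^{\ge 2}=0$ is immediate and $\dim\Ext^1$ drops out of the Euler characteristic with no connecting-map ambiguity.
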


\begin{remark}
The condition that $[\lambda] \subset \hfree$ is a natural one to assume.
In the special case of $\cals(H, \theta, z_0, z_1) = U(\lie{sl}_2)$, the
condition amounts to requiring that $\F$ has characteristic zero, while
for $\cals(H, \theta, z_0, z_1) = U_q(\lie{sl}_2)$, the condition amounts
to $q$ not being a root of unity. Thus, this condition affords a
``clean'' picture in the case of a general triangular GWA, and allows us
to focus on the technical issues of Koszulity and the structure of $\ba,
\widetilde{\ba}$.
\end{remark}

Observe that our main results do not make any assumption on the ground
field (other than $\hfree$ being non-empty, which can entail $\ch \F =
0$).
In particular, we do not require $\F$ to be algebraically closed, as is
the case in the literature when methods involving Gabriel's theorem are
used, to discuss the structure of basic, finite-dimensional, Koszul
algebras. In this paper we do not use Gabriel's result, but rather, rely
on the comprehensive homological information that we derive about the
algebras $\ba$ and $\widetilde{\ba}$ from Theorems \ref{Thom} and
\ref{Tproj}. Thus, we will first prove Theorems \ref{Thom} and
\ref{Tproj}, and then use these results to show the Koszulity and
structure of the algebra $\ba$ in Theorem \ref{Tkoszul}.

Finally, a novel feature of this paper involves introducing an
appropriate combinatorial category of Young tableaux. This is carried out
in Section \ref{Sstyt}, where we provide strong and novel homological
connections between this category and all finite blocks $\calo[\lambda]$
for an arbitrary triangular GWA.

\subsection*{Organization of the paper}
The remainder of this paper is organized as follows. In Section
\ref{Sprelim} we recall the standard approach for developing a theory of
Category $\calo$ over a triangular GWA, leading up to the block
decomposition of $\calo$ into highest weight categories. In Section
\ref{Sproj} we prove a projective resolution of any simple module in
$\calo[\lambda]$ and also prove Theorem \ref{Thom}. Next, in Section
\ref{Stilting} we study maps between the modules $P_r / P_s$, i.e.~the
algebra $\widetilde{\ba}$. Using this we classify all tilting modules,
projective modules, and their submodules. This helps in proving Theorem
\ref{Tproj}, and in Section \ref{S5}, Theorem \ref{Tkoszul}. Finally, in
Section \ref{Sstyt} we define and study sub-triangular Young tableaux
(STYTs), and their many homological connections to the block
$\calo[\lambda]$.

\section{PBW decomposition and the Bernstein-Gelfand-Gelfand
Category}\label{Sprelim}

In this section, we list certain basic properties of triangular GWAs as
well as Category $\calo$ over them. These properties will be used in
proving our main results in the subsequent sections of the paper.

\subsection{PBW property}

We begin with a few preliminary observations on triangular GWAs. The
results in this subsection are not hard to show, and we omit their
proofs as they are relatively straightforward computations. The first
observation is that if $z_1$ is invertible in $H$, then the triangular
GWA $\cals(H,\theta,z_0,z_1)$ is in fact a generalized Weyl algebra over
$H[ud]$, with $\theta$ extended to $H[ud]$ via: $\theta(ud) = ud z_1 +
\theta(z_0)$. This is made more precise in the following result.

\begin{lemma}\label{Lgwa}
Suppose $H$ is an $\F$-algebra with automorphism $\theta$ and $z_0, z_1
\in H$. Define $A = \cals(H,\theta,z_0,z_1)$ as in Equation
\eqref{Egwareln}.
\begin{enumerate}
\item Then $ud, du$ commute with all of $H$. Moreover, $H \tangle{ud} = H
\tangle{du}$.

\item $du$ and $ud$ are simultaneously algebraic or simultaneously
transcendental over $H$ (in $A$).

\item If $du$ is transcendental over $H$ (in $A$), then the following are
equivalent:
\begin{enumerate}
\item $\theta$ extends to an $\F$-algebra automorphism of $H \tangle{du}
= H[du] = H[ud]$, and $A$ is a Generalized Weyl Algebra of degree $1$
over $H[du]$, with $\theta(du) = ud = z_0 + d z_1 u$.

\item $z_1 \in H^\times$ is a unit in $H$.
\end{enumerate}
\end{enumerate}
\end{lemma}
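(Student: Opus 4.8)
The plan is to verify the three assertions by direct computation with the defining relations \eqref{Egwareln}, using the standard PBW-type normal form for $A = \cals(H,\theta,z_0,z_1)$. First I would establish part (1). The relation $ud = z_0 + dz_1 u$ rewrites $ud$ in terms of $du$, so $H\tangle{ud} \subseteq H\tangle{du}$; conversely $du = \theta^{-1}(ud) = \theta^{-1}(z_0) + \theta^{-1}(z_1)\, ud$ (applying $\theta^{-1}$ to both sides of the relation, using $\theta(du) = ud$, which follows from $u\theta(h) = hu$ and $hd = d\theta(h)$ applied to $du$ via $ud = z_0 + dz_1u$ being central — more carefully, $ud = \theta(du)$ is exactly the original GWA axiom), giving the reverse inclusion. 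To see $ud$ commutes with $H$: for $h \in H$, $h(ud) = (hd)(... )$ — better, $(ud)h = u(dh) = u\theta^{-1}(h)d$... I would instead use $uh = \theta(h)u$ and $hd = d\theta(h)$ to get $u\,d\,h = u\,\theta^{-1}(h)\,d \cdot \theta(\theta^{-1}(h))$... the cleanest route is: $ud\cdot h = u(dh) = u d\,\theta(h)\cdot$? Let me just say I would compute $(du)h = d(uh) = d\theta(h)u = (hd)u = h(du)$, so $du$ is central in $H$-terms; since $ud = z_0 + dz_1u$ lies in $H\tangle{du} = H[du]$-span and $z_0, z_1$ are central, $ud$ also commutes with $H$.

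For part (2), I would argue contrapositively and symmetrically: the identities $ud = z_0 + dz_1 u$ and $du = \theta^{-1}(z_0) + \theta^{-1}(z_1)(ud)$ express each of $ud, du$ as a degree-one polynomial in the other with invertible (in the transcendental case, but here just: leading) coefficient... Actually since $z_1$ need not be a unit here, I must be careful: the relation $ud = dz_1 u + z_0$ shows that any polynomial relation $\sum c_i (du)^i = 0$ with $c_i \in H$ can be transported to a relation among powers of $ud$ by substituting, and vice versa. Since substitution is degree-preserving up to multiplication by powers of $z_1$ (which is not a zero divisor issue one must check, or one works in $A$ where the relevant elements are honest elements), a nontrivial algebraic relation for one yields one for the other. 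I expect this to be the step requiring the most care, precisely because $z_1$ is only assumed to lie in $H$, not $H^\times$.

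For part (3), assuming $du$ (equivalently $ud$) transcendental over $H$, I would prove (a)$\iff$(b). For (b)$\Rightarrow$(a): if $z_1 \in H^\times$, define $\theta$ on $H[du]$ by $\theta|_H$ as given and $\theta(du) := ud = z_0 + dz_1u$; one checks this is a well-defined $\F$-algebra endomorphism (using transcendence of $du$ to get well-definedness: $H[du]$ is a free polynomial ring over $H$, so any assignment of $\theta(du)$ extends uniquely compatibly with $\theta|_H$), and it is invertible with inverse sending $du \mapsto \theta^{-1}(z_0) + \theta^{-1}(z_1)\,ud$ — here invertibility of $z_1$ is what makes $\theta$ surjective onto all of $H[du]$, since we need $ud$ (hence $du$) back in the image: from $ud = z_0 + dz_1 u$ we recover $du = z_1^{-1}\theta^{-1}$-adjusted expression, i.e. $\theta^{-1}$ is well-defined only when $z_1$ is a unit. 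Then $A$ has generators $u, d$ over $H[du]$ with $u\cdot r = \theta(r)\cdot u$, $r\cdot d = d\cdot\theta(r)$ for $r \in H[du]$ and $du = \theta^{-1}(du)\cdot$ ... matching the GWA axioms of \cite{Ba} with base ring $H[du]$ and distinguished central element $du \in Z(H[du])$. For (a)$\Rightarrow$(b): if $\theta$ extends to an automorphism of $H[du]$ with $\theta(du) = ud = z_0 + dz_1u = z_0 + z_1(du)$ (using centrality of $z_1$ and the relation in the form $dz_1u = z_1 du$), then $\theta^{-1}$ applied gives $du = \theta^{-1}(z_0) + \theta^{-1}(z_1)\,\theta^{-1}(du)$, and comparing with $\theta(du) = z_0 + z_1\,du$ — since $\theta$ is an automorphism of the polynomial ring $H[du]$ fixing the grading appropriately, the coefficient $z_1$ of the degree-one term must be invertible in $H$ (an automorphism of $H[t]$ over $H$, $\theta$, sends $t$ to $\alpha t + \beta$ with $\alpha \in H^\times$). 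I would spell out this last coefficient-comparison argument as the crux of the reverse implication. Throughout, the main obstacle is bookkeeping the interplay between the central elements $z_0, z_1 \in H$, the automorphism $\theta$, and the transcendence hypothesis; no deep idea is needed, so in the paper these are reasonably omitted as "relatively straightforward computations," and I would present only the skeleton above.
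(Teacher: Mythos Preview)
Your approach for parts (1) and (3) is essentially the paper's: direct computation with the defining relations, and for (3)(a)$\Rightarrow$(b) a coefficient comparison using transcendence of $du$. One small slip in (1): you deduce that $ud$ commutes with $H$ from $ud = z_0 + (du)\theta^{-1}(z_1)$ together with ``$z_0,z_1$ central'', but the lemma does not assume $H$ commutative. The direct computation $(ud)h = u(dh) = u\,\theta^{-1}(h)\,d = \theta(\theta^{-1}(h))\,ud = h(ud)$ works without that hypothesis, just as your computation for $du$ does.

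The real gap is in part (2). Your substitution idea works cleanly in only one direction. From $ud = z_0 + (du)\,\theta^{-1}(z_1)$, if $du$ satisfies $\sum_{i \le m} c_i (du)^i = 0$ with $c_m \neq 0$, multiplying through by $\theta^{-1}(z_1)^m$ gives $\sum_i c_i\,\theta^{-1}(z_1)^{m-i}(ud - z_0)^i = 0$, a relation in $ud$ with leading coefficient $c_m \neq 0$: fine. But in the converse direction, substituting $ud = z_0 + \theta^{-1}(z_1)(du)$ into a relation $\sum_{j \le m} b_j (ud)^j = 0$ produces a polynomial in $du$ whose top coefficient is $b_m\,\theta^{-1}(z_1)^m$; when $z_1$ is a zero divisor this can vanish, and indeed the entire substituted expression can collapse to $0 = 0$ (e.g.\ take $H = \F[x]/(x^2)$, $z_0 = z_1 = x$, and the hypothetical relation $(ud)^2 = 0$). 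Your hedge ``not a zero divisor issue one must check'' is exactly where the argument breaks.

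The paper avoids this entirely with a cleaner, symmetric trick: if $f(du) = \sum_i c_i (du)^i = 0$, then $u \cdot f(du) \cdot d = 0$; since $u\,c_i = \theta(c_i)\,u$ and $u(du)^i d = (ud)^{i+1}$, this reads $\sum_i \theta(c_i)(ud)^{i+1} = 0$, which is nontrivial because $\theta$ is an automorphism. The converse is handled symmetrically via $d \cdot g(ud) \cdot u = 0$. No hypothesis on $z_1$ is needed.
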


\comment{
\begin{proof}
First note that $ud = z_0 + (du) \theta^{-1}(z_1)$, whence $H \tangle{ud}
= H \tangle{du}$ as claimed in any triangular GWA. Moreover, it is easy
to check that $H$ commutes with $du,ud$ if $\theta$ is an automorphism.

For the second part, note that if $f(du) = 0$ in $A$ for $f \in H[T]$,
then $u f(du) d = 0$, whence it follows that $ud$ is also algebraic over
$H$. The converse is similar.

For the third part, we first compute:
\begin{align*}
u \cdot ud = &\ u(z_0 + d z_1 u) = \theta(z_0) u + ud z_1 u =
(\theta(z_0) + (ud)z_1) \cdot u,\\
u \cdot du = &\ ud \cdot u = (z_0 + d z_1 u)u = (z_0 + (du)
\theta^{-1}(z_1)) \cdot u.
\end{align*}

\noindent Similar computations hold for $du \cdot d, ud \cdot d$. Hence
$\theta(du) = z_0 + (du) \theta^{-1}(z_1)$. Now applying $\theta^{-1}$,
we obtain:
\begin{equation}\label{Egwa}
\theta^{-2}(z_1) \theta^{-1}(du) = du - \theta^{-1}(z_0).
\end{equation}

Now suppose $du$ is transcendental over $H$ (in $A$). If (3)(a) holds,
then suppose $\theta^{-1}(du) = \sum_{n \geq 0} h_n (du)^n$ for some $h_n
\in H$. But then,
\[
0 = \theta^{-2}(z_1) \theta^{-1}(du) - du + \theta^{-1}(z_0) = \sum_{n
\geq 0} \theta^{-2}(z_1) h_n (du)^n - du + \theta^{-1}(z_0).
\]

\noindent Since $du$ is transcendental over $H$, we conclude that
$\theta^{-2}(z_1) \cdot h_1 = 1_H$, whence $z_1 \theta^2(h_1) = 1_H$.
This proves (3)(b). Conversely, if (3)(b) holds, then $\theta^{-1}$
extends from $H$ to $H[du]$ using Equation \eqref{Egwa} by:
$\theta^{-1}(du) := \theta^{-2}(z_1^{-1})(du - \theta^{-1}(z_0))$. The
other proposed relations in $A$ have already been shown above.
\end{proof}
}

We next discuss a useful characterization of the transcendence of the
elements $du,ud$ over $H$ in a triangular GWA. This characterization,
called the \textit{PBW property}, allows us to work with a distinguished
$\F$-basis, and is explained as follows. A triangular GWA
$\cals(H,\theta,z_0,z_1)$ is equipped with a $\Z_+$-filtration that
assigns degree $0$ to $H$ and degree $1$ to $d,u$. The associated graded
algebra is the (possibly non-commutative) algebra
$\cals(H,\theta,0,z_1)$. A natural question is to classify all of the
flat -- or \textit{PBW} -- deformations $\cals(H,\theta,z_0,z_1)$. Recall
that flat deformations can be characterized in terms of Ore extensions
$S[X;\sigma,\delta]$, where $\sigma$ is an algebra automorphism of the
$\F$-algebra $S$, and $\delta$ is a $\sigma$-derivation of $S$. Now note
that $H$ and $u$ generate a semidirect product algebra $H \ltimes \F[u]$.
Then the following result is not hard to show, and is used without
reference throughout the remainder of the paper.

\begin{theorem}[PBW property]\label{Tpbw}
Suppose $H$ is an $\F$-algebra with automorphism $\theta$ and $z_0, z_1
\in H$. Define $A = \cals(H,\theta,z_0,z_1)$ as in Equation
\eqref{Egwareln}. Then the following are equivalent:
\begin{enumerate}
\item $\cals(H,\theta,z_0,z_1)$ is a flat deformation of
$\cals(H,\theta,0,z_1)$. (This is called the ``PBW property''.)

\item $z_0, z_1$ are central in $H$.

\item The maps $\sigma,\delta : H \ltimes \F[u] \to H \ltimes \F[u]$
given by
\[
\sigma(u) = z_1 u, \qquad \sigma|_H \equiv \theta, \qquad \delta(u) =
z_0, \qquad \delta|_H \equiv 0
\]

\noindent are indeed an algebra automorphism and a $\sigma$-derivation
respectively.
\end{enumerate}

\noindent In particular, $\cals(H,\theta,z_0,z_1)$ is an Ore extension if
these (equivalent) conditions hold:
\begin{equation}\label{Eskewder}
\cals(H,\theta,z_0,z_1) = H[u; \theta^{-1}, 0][d;\sigma,\delta].
\end{equation}

\noindent If, moreover, $z_1$ is not a zero-divisor in $H$, then these
conditions are equivalent to:
\begin{enumerate}
\setcounter{enumi}{3}
\item $ud, du$ are transcendental over $H$.
\end{enumerate}
\end{theorem}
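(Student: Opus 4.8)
The plan is to prove $(2)\Leftrightarrow(3)$ by a direct verification, then $(1)\Leftrightarrow(2)$ via a single application of Bergman's diamond lemma (which at the same time yields the iterated Ore presentation \eqref{Eskewder}), and finally to bring in $(4)$ under the extra hypothesis that $z_1$ is not a zero-divisor. For $(2)\Leftrightarrow(3)$ I would simply test the proposed maps against the one relation $uh=\theta(h)u$ of $H\ltimes\F[u]$: the assignments $\sigma|_H=\theta$ and $\sigma(u)=z_1u$ extend to a well-defined algebra endomorphism of $H\ltimes\F[u]$ exactly when $z_1\theta^2(h)=\theta^2(h)z_1$ for all $h$, i.e.\ (as $\theta$ is bijective) exactly when $z_1$ is central in $H$, in which case $\sigma$ is moreover invertible as soon as $z_1\in H^\times$; likewise $\delta|_H\equiv 0$ and $\delta(u)=z_0$ define a $\sigma$-derivation exactly when the constraint $z_0\theta(h)=\theta(h)z_0$ forced by that same relation holds for all $h$, i.e.\ exactly when $z_0$ is central. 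Reading these two equivalences in either direction gives $(2)\Leftrightarrow(3)$.

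Next I would prove $(1)\Leftrightarrow(2)$. The PBW property is the statement that the ordered monomials $\{d^ah_\alpha u^b\}_{a,b\ge 0}$ (for $\{h_\alpha\}$ any $\F$-basis of $H$) form an $\F$-basis of $A$, equivalently that the terminating rewriting system attached to \eqref{Egwareln} (rewrite $uh$ to $\theta(h)u$, $hd$ to $d\theta(h)$, and $ud$ to $z_0+dz_1u$) is confluent. Its only overlap ambiguity is the word $u\cdot h\cdot d$: resolving it the two ways gives $\theta(h)z_0+d\theta^2(h)z_1u$ on one side and $z_0\theta(h)+dz_1\theta^2(h)u$ on the other, and comparing the $H$-component with the $dHu$-component shows that these agree for every $h$ precisely when $z_0$ and $z_1$ are central. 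By the diamond lemma this is exactly confluence, so $(1)\Leftrightarrow(2)$; and when these conditions hold $\operatorname{gr} A$ is identified with the graded algebra $\cals(H,\theta,0,z_1)$. In that case the pair $(\sigma,\delta)$ of $(3)$ also presents $A$ as the iterated Ore extension $A=(H\ltimes\F[u])[d;\sigma,\delta]$, which is free over $H\ltimes\F[u]=H[u;\theta^{-1},0]$ on the powers of $d$ and carries the same basis; this proves \eqref{Eskewder}.

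It remains to incorporate $(4)$, assuming that $z_1$ is not a zero-divisor. If $du$ is transcendental over $H$, then by Lemma~\ref{Lgwa} it commutes with $H$, the subalgebra $H[du]=\bigoplus_{k\ge 0}H(du)^k$ is a free $H$-module, and $ud=z_0+\theta^{-1}(z_1)(du)$; carrying out the very same comparison of $(uh)d$ with $u(hd)$ inside the free module $H[du]$ and reading off the coefficients of $1$ and of $du$ once more forces $z_0$ and $z_1$ to be central, so $(4)\Rightarrow(2)$. Conversely, if $(2)$ holds then so does $(1)$, and an easy induction shows that the principal symbol of $(du)^k$ in $\operatorname{gr} A\cong\cals(H,\theta,0,z_1)$ has the form $d^k p_k u^k$ with $p_k$ a product of $\theta$-powers of $z_1$; such a $p_k$ is a non-zero-divisor, in particular nonzero, precisely because $z_1$ is not a zero-divisor, and since these symbols occupy pairwise distinct $\F$-degrees the powers $\{(du)^k\}_{k\ge 0}$ are $H$-linearly independent. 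Hence $du$ is transcendental over $H$, and so is $ud$ by Lemma~\ref{Lgwa}(2). The main point, and the only step with real content, is the PBW/flatness equivalence $(1)\Leftrightarrow(2)$; it however comes down to resolving the single diamond-lemma overlap $u\cdot h\cdot d$, and the remainder is routine bookkeeping with normal forms.
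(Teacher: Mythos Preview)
Your proposal is correct and follows essentially the same route as the paper's (commented-out) proof: a direct check for $(2)\Leftrightarrow(3)$, Bergman's diamond lemma for $(2)\Rightarrow(1)$, a normal-form comparison for $(1)\Rightarrow(2)$, and a leading-term/principal-symbol argument (via the non-zero-divisor hypothesis on $z_1$) for the equivalence with $(4)$. The only cosmetic difference is that for $(1)\Rightarrow(2)$ the paper compares the two expansions of $h(ud)=(ud)h$ rather than resolving the overlap $u\cdot h\cdot d$, but this is the same computation in disguise; your aside that $\sigma$ is only guaranteed to be an automorphism when $z_1\in H^\times$ is a fair caveat the paper elides.
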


\noindent Note that such a deformation would have a ``PBW'' $\F$-basis
$\{ d^r h_i u^s \ : \ 0 \leq r,s \in \Z, \ i \in I \}$,
where $\{ h_i : i \in I \}$ runs over an $\F$-basis of $H$. In
\cite[Section 8]{Kh2}, it was explored if the aforementioned examples of
triangular GWAs satisfied the assumptions of Theorems \ref{Tpbw} and
\ref{Tkoszul}.

\comment{
\begin{proof}
That $(2) \Longleftrightarrow (3)$ is easy, and to now show Equation
\eqref{Eskewder} is standard; see \cite[Proposition 1.2]{Smi} for this
computation in a special case. To show that $(1) \implies (2)$, note that
for all $h \in H$,
\[
h(ud) = h(z_0 + d z_1 u) = h z_0 + d \theta(h) z_1 u,
\]

\noindent On the other hand, it also equals:
\[
h(ud) = u \theta^{-1}(h) d = (ud)h = (z_0 + d z_1 u)h = z_0 h + d z_1
\theta(h) u.
\]

\noindent If $\cals(H,\theta,z_0,z_1)$ is flat, then the equality between
these two expressions for all $h \in H$ implies that $z_0, z_1 \in Z(H)$.
Next, to show that $(2) \implies (1)$, we use the Diamond Lemma from
\cite{Be}, as discussed in \cite[Seection 8]{Kh2}.
We next show that $(4) \implies (2)$. Note for any $h \in H$ that
\[
h z_0 + h \theta^{-1}(z_1) (du) = h z_0 + h d z_1 u = h \cdot ud = u
\theta^{-1}(h) \cdot d = (ud)h = z_0 h + \theta^{-1}(z_1)h (du).
\]

\noindent Hence by (4), $z_0, z_1$ are central in $H$.

Finally, we show that $(1) \implies (4)$. We note that since (1)--(3)
have been shown to be equivalent, hence Proposition \ref{Psmith1} (below)
holds. Now to show (4), by Lemma \ref{Lgwa} it suffices to show that $du$
is transcendental over $H$. First note that $z'_n$ is not a zero-divisor
in $H$ since $z_1$ is not; moreover, $du$ commutes with $H$. Now if $0
\neq f \in H[T]$ is of the form $\sum_{i=0}^m h'_i T^i$ with $h'_m \neq
0$, then using the second equation in \eqref{Ecomp} with $h_1 = \cdots =
h_n = 1$, we compute:
\[
f(du) = \sum_{i=0}^m h'_i (du)^m  \in d^m \theta^m(h'_m)
\prod_{i=0}^{m-1} \theta^i(z'_{m-1-i}) u^m + \sum_{i=0}^{m-1} d^i \cdot H
\cdot u^i.
\]

\noindent Hence $f(du)$ is nonzero by (1), since the leading term is
nonzero.
\end{proof}
}

In the proof of $(1) \implies (4)$ in Theorem \ref{Tpbw}, certain
computations are used that are also needed later in this paper. We now
state these computations for future use.

\begin{prop}\label{Psmith1}
Suppose $H$ is an $\F$-algebra, with automorphism $\theta$ and $z_0, z_1
\in Z(H)$ central. Define $A = \cals(H,\theta,z_0,z_1)$ as in Equation
\eqref{Egwareln}.
\begin{enumerate}
\item The centralizers in $H$ of $u$ and $d$ coincide: $Z_H(u) = Z_H(d) =
\ker (\id_H - \theta)$.

\item For all $h, h_1, \dots, h_n \in H$ and integers $0 \leq m \leq n$,
\begin{align}
u \cdot d^n h \quad = & \quad d^n \theta(h) z'_n u + d^{n-1} \z_n
h,\notag\\
\prod_{i=1}^n (d h_i u) \quad \in & \quad d^n \theta^{n-1}(h_1 \cdots h_n)
\prod_{i=0}^{n-1} \theta^i(z'_{n-1-i}) u^n + \sum_{i=1}^{n-1} d^i \cdot H
\cdot u^i, \label{Ecomp}\\
u^m d^n \quad \in & \quad d^{n-m} \cdot \prod_{j=n-m}^{n-1} \z_{j+1} +
\cals(H,\theta,z_0,z_1) \cdot u.\notag
\end{align}

\item For all $i,j,k,l \in \Z_+$ and $h,h' \in H$, $d^i h u^j \cdot d^k
h' u^l \in \sum_{t=0}^{\min(j,k)} d^{i+k-t} \cdot H \cdot u^{j+l-t}$.
\end{enumerate}
\end{prop}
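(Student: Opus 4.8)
Throughout, the plan is to obtain all three parts as formal consequences of the defining relations $uh=\theta(h)u$, $hd=d\theta(h)$, $ud=z_0+dz_1u$ (for $h\in H$), using that by hypothesis $z_0,z_1$ are central in $H$, whence so are all $\theta^i(z_0)$, $\theta^i(z_1)$, and hence all the elements $z'_n$ and $\z_n$ of Definition~\ref{Delements}; we also use constantly the elementary consequences $hd^n=d^n\theta^n(h)$ and $u^nh=\theta^n(h)u^n$ for $h\in H$, and (for part~(1)) the PBW $\F$-basis $\{d^rh_iu^s\}$ of $A$ furnished by Theorem~\ref{Tpbw}, whose condition~(2) is exactly our hypothesis. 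Part~(1) is then immediate: $h\in Z_H(u)$ says $(\theta(h)-h)u=0$, and since right multiplication by $u$ carries the PBW basis vectors $h_i\in H$ to the \emph{distinct} PBW basis vectors $h_iu$, it is injective on $H$, so $Z_H(u)=\ker(\id_H-\theta)$; the identical argument with left multiplication by $d$ (which carries the $h_i$ to the distinct basis vectors $dh_i$) and $hd=dh\Leftrightarrow d(\theta(h)-h)=0$ gives $Z_H(d)=\ker(\id_H-\theta)$.

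For part~(2) I would treat the three identities in turn. The first, $ud^nh=d^n\theta(h)z'_nu+d^{n-1}\z_nh$, is an induction on $n$, the base case $n=0$ being just $uh=\theta(h)u$ (the $d^{-1}$-term dropping out since $\z_0=0$). For the step one writes $ud^{n+1}h=(ud)\,d^nh=z_0d^nh+dz_1\,(ud^nh)$, inserts the inductive hypothesis, and pushes $z_0,z_1$ rightward past the powers of $d$; the identity then collapses to the two recursions $z'_{n+1}=\theta^n(z_1)z'_n$ and $\z_{n+1}=\theta^n(z_0)+\theta^{n-1}(z_1)\z_n$, both read directly off Definition~\ref{Delements} (the latter after factoring $\theta^{n-1}(z_1)$ out of $\sum_{i=0}^{n-1}\theta^i(z_0z'_{n-i})$). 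The third, $u^md^n\in d^{n-m}\prod_{j=n-m}^{n-1}\z_{j+1}+A\cdot u$, is an induction on $m$ with $m=0$ trivial; for the step, apply $u\cdot(-)$ to the inductive hypothesis, observe $u\cdot(A\cdot u)\subseteq A\cdot u$ because $u(au)=(ua)u$, and apply the first identity to $u\cdot d^{n-m}\bigl(\prod_{j=n-m}^{n-1}\z_{j+1}\bigr)$: one of the two resulting summands lies in $A\cdot u$, and the other equals $d^{n-m-1}\z_{n-m}\prod_{j=n-m}^{n-1}\z_{j+1}=d^{n-(m+1)}\prod_{j=n-(m+1)}^{n-1}\z_{j+1}$, exactly as required.

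The second identity of part~(2) is again an induction on $n$, using the first. One peels off the leftmost factor, $\prod_{i=1}^n(dh_iu)=(dh_1u)\bigl(\prod_{i=2}^n(dh_iu)\bigr)$, applies the inductive hypothesis to the product of the last $n-1$ factors, and then uses the first identity of part~(2) to move the $u$ of $dh_1u$ past the resulting $d^{n-1}(\cdots)$; every term not of the advertised leading shape is visibly of the form $d^i\cdot H\cdot u^i$ with $1\le i\le n-1$, and the leading coefficient comes out correctly from the telescoping identity $\theta\bigl(\prod_{i=0}^{m-1}\theta^i(z'_{m-1-i})\bigr)z'_m=\prod_{i=0}^m\theta^i(z'_{m-i})$.

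For part~(3) I would first record the auxiliary fact $u^jd^k\in\sum_{t=0}^{\min(j,k)}d^{k-t}\cdot H\cdot u^{j-t}$, proved by induction on $j+k$ (the cases $j=0$ or $k=0$ immediate): writing $u^jd^k=u^{j-1}(ud^k)=u^{j-1}\bigl(d^kz'_ku+d^{k-1}\z_k\bigr)$ via the first identity of part~(2) with $h=1$, one applies the inductive hypothesis to $u^{j-1}d^k$ and to $u^{j-1}d^{k-1}$ and absorbs the central factors $z'_k,\z_k$ past the remaining powers of $u$ (using $u^pc=\theta^p(c)u^p$ for central $c$); a routine index shift places both contributions in $\sum_{t=0}^{\min(j,k)}d^{k-t}Hu^{j-t}$. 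Granting this, part~(3) follows by writing $d^ihu^j\cdot d^kh'u^l=d^ih\,(u^jd^k)\,h'u^l$, expanding $u^jd^k$ by the auxiliary fact, and moving $h$ past $d^{k-t}$ and $h'$ past $u^{j-t}$, whereupon each summand becomes $d^{i+k-t}\,\theta^{k-t}(h)\,H\,\theta^{j-t}(h')\,u^{j+l-t}\subseteq d^{i+k-t}\cdot H\cdot u^{j+l-t}$. None of these steps is conceptually deep; the only real work is the bookkeeping with the twisted products $z'_n$ and $\z_n$ — principally verifying the $\z_n$-recursion for the first identity and threading the telescoping identity through the induction for the second — so the proof is organised to let those recursions carry the weight of each induction, all remaining $\theta$-twisting being routine applications of $hd^n=d^n\theta^n(h)$ and $u^nh=\theta^n(h)u^n$.
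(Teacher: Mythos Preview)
Your proof is correct and is exactly the sort of direct induction on the defining relations one would expect; the paper in fact omits the proof entirely, stating only that ``the proofs of these statements are standard and are hence omitted.'' Your verification of the recursion $\z_{n+1}=\theta^n(z_0)+\theta^{n-1}(z_1)\z_n$ and the telescoping identity for the $z'$-products are precisely the bookkeeping checks that make the argument go through.
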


\noindent The proofs of these statements are standard and are hence
omitted.

In proving Theorem \ref{Thom}, we require one further preliminary result
in $\cals(H,\theta,z_0,z_1)$.

\begin{prop}\label{Pakaki}
Suppose $A = \cals(H,\theta,z_0,z_1)$ is a triangular GWA.
Consider the grading on $A$ with $\deg u = 1, \deg d = -1, \deg H = 0$.
Then for all $n \in \Z$, $A[n]$ is isomorphic to $H[du]$ as a left
$H[du]$-module.
\end{prop}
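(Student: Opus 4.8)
The plan is to exhibit an explicit element $g_n \in A[n]$ and to check directly, via the PBW basis, that the map $\phi_n \colon H[du] \to A[n]$, $x \mapsto x g_n$, is an isomorphism of left $H[du]$-modules. By Theorem \ref{Tpbw} the PBW property holds here (since $H$ is commutative, $z_0,z_1$ are central; and $z_1 \in H^\times$ is not a zero-divisor), so $A$ has the $\F$-basis $\{ d^r h_i u^s : r,s \in \Z_+,\ i \in I\}$ and $du$ is transcendental over $H$; hence $H[du]$ is a genuine polynomial ring in $T := du$, and, with respect to the grading in the statement,
\[
A[n] \ = \ \bigoplus_{r \geq \max(0,-n)} d^r\, H\, u^{r+n}
\]
as an $\F$-vector space. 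I would take $g_n := u^n$ when $n \geq 0$ and $g_n := d^{-n}$ when $n \leq 0$ (so $g_0 = 1$); in either case $g_n \in A[n]$, and $\phi_n$ is visibly a homomorphism of left $H[du]$-modules, so the task is to show $\phi_n$ is bijective.

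The engine of the argument is a ``leading term'' computation for $(du)^r g_n$ in the PBW basis, whose essential feature is that the leading coefficient is a \emph{unit} of $H$ --- precisely where the hypothesis $z_1 \in H^\times$ enters. For $n \geq 0$, Equation \eqref{Ecomp} of Proposition \ref{Psmith1} with all $h_i = 1$ gives $(du)^r \in d^r c_r u^r + \sum_{0 \leq j < r} d^j H u^j$, where $c_r := \prod_{k=0}^{r-1}\theta^k(z'_{r-1-k})$ is a product of the units $z'_m$ (each $z'_m$ being a product of $\theta$-conjugates of $z_1$), so $c_r \in H^\times$. For $n \leq 0$, put $m := -n$; the first identity of Proposition \ref{Psmith1} with $h = 1$ reads $u d^k = d^k z'_k u + d^{k-1}\z_k$, whence $(du)\,d^k = d^{k+1} z'_k u + d^k \z_k$, and a short induction on $r$ then yields $(du)^r d^m \in d^{m+r} b_r u^r + \sum_{0 \leq j < r} d^{m+j} H u^j$ with units $b_r \in H^\times$ given by $b_0 = 1$ and $b_{r+1} = z'_{m+r}\theta(b_r)$. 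In both regimes the outcome has the uniform shape: writing $p$ for the relevant $d$-exponent ($p = r$ if $n \geq 0$, $p = m+r$ if $n \leq 0$) and $q := p+n$, one has for every $h \in H$
\[
\phi_n\big(h\,(du)^r\big) \ \in \ d^{\,p}\,\big(\theta^{\,p}(h)\cdot(\text{unit of }H)\big)\,u^{\,q} \ + \ (\text{terms of strictly smaller $d$-exponent, of the same type}).
\]

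From this, injectivity is immediate: if $0 \neq x = \sum_{i=0}^{r} h_i (du)^i$ with $h_r \neq 0$ (a unique expression, by transcendence of $du$), the component of $\phi_n(x)$ in the top PBW summand $d^{\,p} H u^{\,q}$ equals $d^{\,p}\big(\theta^{\,p}(h_r)\cdot(\text{unit})\big) u^{\,q} \neq 0$. Surjectivity follows by induction on the $d$-exponent: the bottom summand lies in $\im\phi_n$ since $\phi_n(H) = H g_n$ equals $H u^n$ (for $n \geq 0$) or $d^m\theta^m(H) = d^m H$ (for $n \leq 0$); and in the inductive step, as $h$ ranges over $H$ the coefficient $\theta^{\,p}(h)\cdot(\text{unit})$ ranges over all of $H$, so the next summand $d^{\,p} H u^{\,q}$ lies in $\im\phi_n$ modulo the image of the lower summands, which is covered by the inductive hypothesis. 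Since $A[n]$ is the union of these finite partial sums, $\phi_n$ is surjective, and we are done.

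The one genuinely technical step is deriving and bookkeeping the leading-term formula in the case $n \leq 0$: the required expansion of $(du)^r d^m$ is not stated verbatim in Proposition \ref{Psmith1} and must be assembled by the induction above. The crucial --- and easy --- point throughout is that every leading coefficient that appears is invertible in $H$, without which neither injectivity nor surjectivity would go through. The case $n = 0$ is degenerate ($g_0 = 1$), and simply recovers $A[0] = H[du]$, consistently with Lemma \ref{Lgwa}(1); the cases $n > 0$ and $n < 0$ are otherwise completely parallel.
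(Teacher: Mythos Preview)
Your proof is correct and follows essentially the same approach as the paper: both arguments rest on the PBW basis, the leading-term identities from Proposition \ref{Psmith1}, and the fact that $z_1 \in H^\times$ forces the leading coefficients to be units, giving a triangular (hence invertible) passage between $H[du]$ and $A[n]$. The only organizational difference is that the paper first reduces $n \geq 0$ to $n = 0$ and then handles $n < 0$ by showing $d^{-n}A[0] = A[0]\,d^{-n}$, whereas you treat all $n$ uniformly via the explicit generator $g_n$ and a direct leading-term induction --- but the underlying mathematics is the same.
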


\begin{proof}
It follows from Theorem \ref{Tpbw} that the $n$th graded component $A[n]$
of $A$ is spanned by $d^m H u^{m+n}$ for all $m \geq \max(0,-n)$. It
follows from the PBW Theorem \ref{Tpbw} that the result for $n \geq 0$
reduces to that for $n = 0$. It thus suffices to show the result for $n
\leq 0$.

First suppose that $n=0$, and consider the identity map between the
filtered vector spaces $: H[du] \to A[0]$, where the filtration is
according to the length of the monomials in $d,u$. By the second of the
equations \eqref{Ecomp} (with $h_i = 1$ for all $i$), this map is an
isomorphism on each filtered piece (given by an invertible triangular
matrix, since $z_1 \in H^\times$). This shows the result for $n=0$.
Next, if $n<0$, note that $A[n] = d^{-n} A[0] \cong d^{-n} H[du]$ is a
free rank one right $H[du]$-module. Thus, it remains to show that $d^{-n}
A[0] \cong A[0] d^{-n}$ for $n < 0$. This can be shown using the first of
the equations \eqref{Ecomp} (with $h=1$), the filtration on $A[0]$, and
that $z_1 \in H^\times$.
\end{proof}

\subsection{The Bernstein-Gelfand-Gelfand Category}

The goal of this subsection is to introduce and develop basic properties
of an important category of weight modules of triangular GWAs -- an
analogue of the Bernstein-Gelfand-Gelfand category $\calo$ \cite{BGG1}.
In light of Lemma \ref{Lgwa} and Theorem \ref{Tpbw}, we make the
following assumptions.

\begin{stand}\label{Stand2}
For the remainder of this paper, assume that $A =
\cals(H,\theta,z_0,z_1)$ is a triangular GWA for which $\hfree$ is
non-empty.
\end{stand}

These assumptions are satisfied by many of the examples in the literature
when $\theta$ is an algebra automorphism of $H$. See \cite[Section
8]{Kh2} for the two large ``classical'' and ``quantum'' families of
examples. Also note here that $\theta$ is necessarily not of finite
order.

We now define and study Category $\calo$ via a series of results that are
required in future sections. We omit the proofs as these results are
shown in \cite{Kh2} in greater generality. We begin by setting some
notation.

\begin{defn}
Define the \textit{Verma module} with highest weight $\lambda \in
\widehat{H}$ to be $M(\lambda) := A / (Au + A \cdot \ker \lambda)$.
\end{defn}

\begin{remark}\label{Rverma}
We now list standard properties of Verma modules and $\calo$; see
\cite{Kh2} for the proofs.
\begin{enumerate}
\item Given $n \geq 0$, $\lambda \in \widehat{H}$, and an $A$-module $M$,
we have: $u^n M_\lambda \subset M_{n * \lambda}$ and $d^n M_\lambda
\subset M_{(-n) * \lambda}$.

\item For all $\lambda \in \hfree$, $M(\lambda) \in \calo$. It is a
weight module with all nonzero weight spaces of weight $n * \lambda$ for
some $n \leq 0$.

\item $M(\lambda)$ is generated by a weight vector $m_\lambda$, as a free
$\F[d]$-module. The weight space $M_{(-n) * \lambda}$ is spanned by $d^n
m_\lambda$, and $u m_\lambda = 0$. From the point of view of $A$ being a
generalized Weyl algebra (Lemma \ref{Lgwa}), $M(\lambda)$ is killed by
$\ker_{H[du]}(\lambda)$, where every $\lambda \in \widehat{H}$ extends to
an algebra map $\lambda : H[du] \to \F$ that sends $du$ to $0$.

\item For $\lambda \in \hfree$, $M(\lambda)$ has a unique simple quotient
$L(\lambda)$. The modules $L(\lambda)$ are pairwise non-isomorphic.

\item $A$ has an anti-involution $i : A \to A$, which sends $d
\leftrightarrow u$ and fixes all of $H$.

\item The anti-involution $i$ induces a contravariant, involutive,
additive ``restricted duality functor'' $F$, which preserves the category
$\calo_\N$ of finite length objects in $\calo$. It is defined as follows:
given an $H$-weight module $M := \oplus_{\mu \in \widehat{H}} M_\mu$, we
define $F(M) := \oplus_{\mu \in \widehat{H}} M_\mu^*$. This is an
$A$-module action via: $(am^*)(m) := m^*(i(a)m)$ for $a \in A, m \in M,
m^* \in F(M)$. Then,
\[
F(L(\lambda)) = L(\lambda)\ \forall \lambda \in \hfree, \qquad F^2(M)
\cong M\ \forall M \in \calo_\N.
\]

\item Moreover, $F$ is an exact functor on $\calo_\N$.
\end{enumerate}
\end{remark}

We now discuss the structure of Category $\calo$, which turns out to be
somewhat different from the well-studied case of Lie algebras with
triangular decomposition \cite{MP}.

\begin{prop}\label{Pverma2}
Every module $M \in \calo$ is a direct sum of summands:
\[
M = \bigoplus_{\tangle{\mu} \in \widehat{H} / \Z} M \tangle{\mu},
\]

\noindent where given $\mu \in \widehat{H}$, $\tangle{\mu} := \Z * \mu$
and $M \tangle{\mu} := \bigoplus_{n \in \Z} M_{n * \mu}$.
Thus $M$ has a finite filtration, each of whose subquotients is either a
quotient of a Verma module, or else a finite-dimensional weight module
$N$ such that $\wt N \subset \widehat{H} \setminus \hfree$. In
particular, $\calo$ is finite length if and only if every Verma module
has finite length.
\end{prop}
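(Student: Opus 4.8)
The plan is to prove Proposition \ref{Pverma2} in three stages: first the direct-sum decomposition along $\Z$-orbits of weights, then the existence of a finite filtration with the stated subquotients, and finally the equivalence concerning finite length. Throughout we use the structural facts from Remark \ref{Rverma}, in particular that $u$ acts locally finitely, that $u^n$ and $d^n$ shift weights by $n*\lambda$ and $(-n)*\lambda$ respectively, and the PBW basis $\{d^r h_i u^s\}$ from Theorem \ref{Tpbw}.

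For the decomposition, I would argue as follows. Fix $M \in \calo$; since $M$ is a weight module, $M = \bigoplus_{\mu \in \widehat H} M_\mu$. The key point is that the three generating families $H$, $u$, $d$ of $A$ each preserve the coarser decomposition $M = \bigoplus_{\tangle{\mu} \in \widehat H / \Z} M\tangle{\mu}$: indeed $H$ preserves each weight space, while $u M_\nu \subset M_{1*\nu}$ and $d M_\nu \subset M_{(-1)*\nu}$ by Remark \ref{Rverma}(1), and both $1*\nu$ and $(-1)*\nu$ lie in the same $\Z$-orbit $\tangle{\nu}$. Hence each $M\tangle{\mu}$ is an $A$-submodule, and $M$ is their (internal) direct sum. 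One should also check each $M\tangle{\mu}$ lies in $\calo$ — it is a weight module with finite-dimensional weight spaces and locally finite $u$-action inherited from $M$, and it is finitely generated because $M$ is Noetherian over $A$ (as $A$ is Noetherian, being an iterated Ore extension by Theorem \ref{Tpbw}), so submodules of finitely generated modules are finitely generated. That only finitely many summands are nonzero then follows since a finite generating set of $M$ meets only finitely many orbits.

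For the filtration, it suffices by the decomposition to treat a single module $M = M\tangle{\mu}$ supported on one orbit $\Z * \mu$. Split into two cases depending on whether $\mu \in \hfree$. If $\mu \notin \hfree$, then by definition of \hfree\ there is some $n \neq 0$ with $\lambda \circ \theta^n = \lambda$ for $\lambda = \mu$; I would argue that the locally finite $u$-action together with finite-dimensional weight spaces forces $\wt M$ to be finite (the $u$-action being locally finite bounds weights from above along the orbit, and the ``periodicity'' obstruction prevents the orbit from being infinite in \hfree), so $M$ is finite-dimensional and we are in the second alternative. If $\mu \in \hfree$, pick a weight $\nu$ maximal in $\wt M$ (exists: weights along the orbit are bounded above by local finiteness of $u$, using that $u^N$ kills any given weight vector, and the partial order from the $\Z$-action is a total order on the orbit). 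Then any nonzero weight vector $v \in M_\nu$ is a highest-weight vector ($uv = 0$ by maximality), so it generates a quotient of the Verma module $M(\nu)$; quotienting $M$ by $A v$ strictly decreases the dimension of the top weight spaces, and one induces on a suitable length-type invariant (e.g.\ $\sum$ of dimensions of weight spaces of weights $\geq$ some bound, or Noetherianity) to build the finite filtration with Verma-quotient subquotients.

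The equivalence in the last sentence is then essentially formal: if every Verma module has finite length, then so does every quotient of a Verma module, and every finite-dimensional module trivially has finite length, so each subquotient in the filtration above has finite length, whence $M$ does; conversely if $\calo$ is finite length then in particular every $M(\lambda)$, being an object of $\calo$ by Remark \ref{Rverma}(2), has finite length. I expect the main obstacle to be the $\mu \notin \hfree$ case — pinning down rigorously that the combination of local finiteness of $u$ and the failure of the freeness of the $\Z$-action forces finite-dimensionality, and identifying the weight supports precisely as lying in $\widehat H \setminus \hfree$. The rest is routine given the PBW property and Noetherianity.
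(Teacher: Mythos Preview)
The paper does not actually prove Proposition~\ref{Pverma2}; it is one of the results whose proof is explicitly omitted with a reference to \cite{Kh2} for the argument in greater generality. So there is no in-paper proof to compare against, and I can only assess your proposal on its own merits. Your outline is the standard argument for this kind of result and is essentially correct.

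One point deserves sharpening, and it is precisely the case you flag as ``the main obstacle'': you are overcomplicating the case $\mu \notin \hfree$. By the definition of $\hfree$, there is some $n \neq 0$ with $\mu \circ \theta^n = \mu$, i.e.\ $n * \mu = \mu$; hence the orbit $\Z * \mu$ has at most $|n|$ elements and is finite outright. Since $\wt M \subset \Z * \mu$ and each weight space is finite-dimensional, $M$ is finite-dimensional immediately --- no appeal to local finiteness of $u$ is needed. (You should also observe that $\hfree$ is $\Z$-stable, so every weight in the orbit of such a $\mu$ lies in $\widehat H \setminus \hfree$, which gives the stated containment $\wt N \subset \widehat H \setminus \hfree$.) This is in fact the easier of the two cases.

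For termination of the filtration in the $\mu \in \hfree$ case, your sketch is right but the invariant should be made explicit: if $v_1, \dots, v_k$ are weight-vector generators of $M$ and $\nu_0$ is the minimum of their weights, then $\dim \bigoplus_{\nu \geq \nu_0} M_\nu$ is finite (weights are bounded above by local finiteness of $u$, and each weight space is finite-dimensional) and strictly decreases at each step, since the maximal weight of $M$ always dominates $\nu_0$. When this quantity reaches zero the module is zero, because $M$ is generated by the part in weights $\geq \nu_0$.
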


The next result is very useful in determining the structure of modules in
$\calo$.

\begin{prop}\label{Pext}
Given $\lambda,\mu \in \hfree$, $M \in \Ext^1_\calo(L(\mu),L(\lambda))$
is a non-split extension, if and only if there exists $0 \neq n \in \Z$
such that $\mu = n * \lambda$ and $M(\max(\lambda,\mu))$ surjects onto
one of $M$ and $F(M)$. In particular, the following are equivalent:
\begin{enumerate}
\item $\Ext^1_\calo(L(\mu),L(\lambda)) \neq 0$.
\item $\dim \Ext^1_\calo(L(\mu),L(\lambda)) = 1$.
\item $\Z * \lambda = \Z * \mu$ and $M(\min(\lambda,\mu))$ is the unique
maximal submodule in $M(\max(\lambda,\mu))$.
\end{enumerate}

\noindent In particular, $[\lambda] = [\mu]$.
\end{prop}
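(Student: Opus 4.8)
The plan is to reduce the whole proposition to one structural fact about Verma modules, namely that for every $\nu \in \hfree$ the Verma module $M(\nu)$ is \emph{uniserial}, and then to read off the remaining assertions. I would begin by proving this fact. By Remark \ref{Rverma}(3), $M(\nu)$ is a free $\F[d]$-module on $m_\nu$, so all its weight spaces are at most one-dimensional and every submodule is the span of the weight vectors it contains. Using the identity $u\, d^K m_\nu = \nu(\z_K)\, d^{K-1}m_\nu$ — the special case $h = 1$ of the first formula in Proposition \ref{Psmith1} — closure under $d$ and $u$ forces the nonzero submodules of $M(\nu)$ to be exactly the $N_K(\nu) := \bigoplus_{k \ge K}\F d^k m_\nu$ for those $K \ge 0$ with $\nu(\z_K) = 0$; each such $N_K(\nu)$ is a free $\F[d]$-module on a highest-weight vector of weight $\nu\circ\theta^K$, hence $N_K(\nu) \cong M(\nu\circ\theta^K)$. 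These submodules form a chain, so $M(\nu)$ is uniserial; in particular it has at most one quotient of each finite length, and, when it is not simple, its unique maximal submodule is $N_{K_1}(\nu)$, where $K_1$ is the least positive zero of $k \mapsto \nu(\z_k)$.

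Next I would analyze an extension $0 \to L(\lambda) \xrightarrow{\iota} M \xrightarrow{\pi} L(\mu) \to 0$ in $\calo$. If it is non-split then $M$ is indecomposable, so by Proposition \ref{Pverma2} all composition factors of $M$ lie on a single $\Z$-orbit; thus $\mu = n * \lambda$, and $n \neq 0$ since a non-split self-extension would have a two-dimensional $\lambda$-weight space while being a quotient of $M(\lambda)$, whose $\lambda$-weight space is one-dimensional. Hence $L(\lambda) \not\cong L(\mu)$, so $\Soc M = \iota(L(\lambda)) = \ker\pi$ and the head of $M$ is $L(\mu)$. Applying the duality functor $F$ of Remark \ref{Rverma} interchanges the two ends of the sequence, so I may assume $\lambda < \mu$; then every weight of $M$ is $\le \mu$, so choosing $m \in M_\mu$ that generates $M$ we get $u m \in M_{1*\mu} = 0$ and $\ker(\mu) m = 0$, whence $M$ is a quotient of $M(\mu) = M(\max(\lambda,\mu))$. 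For the converse implication, if an extension $M$ satisfies the displayed condition then it cannot split, because a split extension $L(\lambda) \oplus L(\mu)$ with $\lambda \neq \mu$ has non-simple head, whereas every quotient of a Verma module — and every $F$ of such — has simple head. This proves the stated equivalence.

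Finally I would close up the equivalences, taking $\lambda < \mu$ throughout (both condition (3) and, via $F$, the vanishing of $\Ext^1_\calo(L(\mu),L(\lambda))$ are symmetric in $\lambda,\mu$). The implication (2) $\Rightarrow$ (1) is trivial. For (1) $\Rightarrow$ (3): a non-split $M$ is a length-two quotient of $M(\mu)$, so $\Rad M(\mu)$ surjects onto $\Rad M = L(\lambda)$; since $\Rad M(\mu) = N_{K_1}(\mu)$ has simple head $L(\mu\circ\theta^{K_1})$, this forces $\lambda = \mu\circ\theta^{K_1}$ and therefore $M(\lambda) \cong N_{K_1}(\mu) = \Rad M(\mu)$, the unique maximal submodule of $M(\mu)$ — which is (3); it also shows $\lambda \in [\mu]$, so $[\lambda] = [\mu]$ by the consequence of \eqref{ES3} noted after it. For (3) $\Rightarrow$ (1): then $E := M(\mu) / \Rad M(\lambda)$ is a uniserial, hence indecomposable, module of length two with socle $L(\lambda)$ and head $L(\mu)$, giving a non-split class. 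For (1) $\Rightarrow$ (2): by the above, every non-split extension of $L(\mu)$ by $L(\lambda)$ is isomorphic as a module to this $E$ (the unique length-two quotient of $M(\mu)$); an arbitrary module isomorphism $M \xrightarrow{\sim} E$ sends $\Soc M = \ker\pi$ to $\Soc E = \ker\pi_0$, hence induces automorphisms of $L(\lambda)$ and of $L(\mu)$, each of which is a nonzero scalar because $\End_\calo$ of a simple module with one-dimensional weight spaces equals $\F$; thus $M \xrightarrow{\sim} E$ is an isomorphism of short exact sequences after rescaling the two ends, so every class in $\Ext^1_\calo(L(\mu),L(\lambda))$ is a scalar multiple of $[E]$, and the space is exactly one-dimensional.

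I expect the only genuinely delicate point to be (1) $\Rightarrow$ (2): the fact that all non-split extensions are isomorphic as modules does not on its own bound $\dim \Ext^1$, and the Schur-type rescaling must be carried out over an arbitrary ground field — which works here precisely because one-dimensionality of weight spaces forces $\End_\calo(L(\nu)) = \F$ rather than a larger division ring. Everything else follows routinely from Propositions \ref{Pverma2} and \ref{Psmith1} and the duality functor $F$.
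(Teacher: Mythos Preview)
Your proof is correct. Note, however, that the paper does not actually prove Proposition~\ref{Pext}: at the start of \S\ref{Sprelim}.2 it announces that the proofs of the results in that subsection --- including Propositions~\ref{Pverma2}, \ref{Pext}, and \ref{Pverma} --- are omitted and deferred to \cite{Kh2}. So there is no in-paper argument to compare against; you have supplied a self-contained proof where the paper gives none.

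Two remarks on your write-up. First, the uniseriality of $M(\nu)$ that you establish at the outset is precisely the content of Proposition~\ref{Pverma}, which the paper states separately (also without proof); you could simply cite it and shorten the argument. Second, your self-assessment of $(1)\Rightarrow(2)$ as the only delicate point is accurate, and your handling is the right one: one-dimensionality of weight spaces forces $\End_\calo(L(\nu)) = \F$ rather than merely a division ring, and this is exactly what lets you pass from ``all non-split middle terms are isomorphic to $E$'' to ``$\dim\Ext^1 = 1$'' via the action of $\Aut(L(\lambda))\times\Aut(L(\mu)) = \F^\times\times\F^\times$. The remaining steps --- ruling out self-extensions (implicitly using that an indecomposable length-two module has simple head, hence is cyclic on a highest-weight vector), the reduction via $F$ to $\lambda<\mu$, and the deduction of $[\lambda]=[\mu]$ from the remark after \eqref{ES3} --- are all fine, if occasionally terse.
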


The following result uses Proposition \ref{Psmith1} to analyze Verma
modules in detail in Category $\calo$, and provides motivation for
considering the sets $[\lambda]$ used in Definition \ref{Delements} and
Theorem \ref{Tkoszul}.

\begin{prop}\label{Pverma}
Fix any triangular GWA $A$ and $\mu \in \hfree$. Then $M(\mu)$ is a
uniserial module, with unique composition series:
\[
M(\mu) \supset M((-n_1) * \mu) \supset M((-n_2) * \mu) \supset \cdots,
\]

\noindent where $0 < n_1 \leq n_2 \leq \cdots$ comprise the set $\{ n
\geq 1 : \mu(\z_n) = 0 \}$. Thus $\calo$ is finite length if and only if
$[\mu] \cap (-\Z_+ * \mu)$ is finite for every $\mu \in \hfree$.
Moreover, the following are equivalent, given $n \in \Z_+$ and $\mu \in
\hfree$:
\begin{enumerate}
\item The multiplicity $[M(n * \mu) : L(\mu)]$ is nonzero.
\item $[M(n * \mu) : L(\mu)] = 1$.
\item $M(\mu) \hookrightarrow M(n * \mu)$.
\item $(n * \mu)(\z_n) = 0$.
\item $\mu(\z_{-n}) = 0$.
\end{enumerate}
\end{prop}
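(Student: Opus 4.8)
The plan is to reduce every assertion to an explicit description of the $A$-module $M(\mu)$ on its PBW basis. By Remark~\ref{Rverma}(3) we write $M(\mu)=\bigoplus_{n\ge 0}\F\,d^n m_\mu$, where $d^n m_\mu$ spans the one-dimensional weight space of weight $(-n)*\mu$ and $u m_\mu=0$; since $\mu\in\hfree$ these weights are pairwise distinct and the $\Z$-action is free. The only computation needed is the action of $u$: starting from the identity $u\,d^n h = d^n\theta(h)z'_n u + d^{n-1}\z_n h$ of Proposition~\ref{Psmith1} (with $h=1$, so $u\,d^n = d^n z'_n u + d^{n-1}\z_n$), applying it to $m_\mu$ and using $u m_\mu=0$ together with the fact that $\z_n\in H$ acts on $m_\mu$ by $\mu(\z_n)$, one obtains
\[
u\cdot d^n m_\mu=\mu(\z_n)\,d^{n-1}m_\mu,\qquad n\ge 1,
\]
which, together with the scalar action of $H$ and the free action of $d$, determines $M(\mu)$.

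Next I would classify the submodules. A submodule $N\subseteq M(\mu)$ is again a weight module — finitely many distinct characters of $H$ being separable by elements of $H$ — hence the span of a subset of the lines $\F d^n m_\mu$; stability under $d$ forces this subset to be $\{n\ge k\}$ for $k$ the least index occurring, so $N=N_k:=\bigoplus_{n\ge k}\F d^n m_\mu$. Stability under $u$ at the bottom index forces $u\cdot d^k m_\mu=\mu(\z_k)d^{k-1}m_\mu\in N_k$, i.e.\ $\mu(\z_k)=0$ when $k\ge 1$; conversely $N_k$ is manifestly $u$-stable once $\mu(\z_k)=0$. Thus the submodules of $M(\mu)$ are exactly the $N_k$ with $k=0$ or $\mu(\z_k)=0$, and they form a chain, so $M(\mu)$ is uniserial; moreover $1\mapsto d^k m_\mu$ induces an $A$-module isomorphism $M((-k)*\mu)\cong N_k$ (both free of rank one over $\F[d]$, with generator killed by $u$ and matching weights). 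Enumerating the positive $k$ with $\mu(\z_k)=0$ in increasing order as $n_1<n_2<\cdots$ gives the composition series $M(\mu)=N_0\supset N_{n_1}\supset N_{n_2}\supset\cdots$, each quotient $N_{n_i}/N_{n_{i+1}}$ being simple since no submodule of $M(\mu)$ lies strictly between, and hence equal to $L((-n_i)*\mu)$. For the finite-length claim: by the last sentence of Proposition~\ref{Pverma2}, $\calo$ is finite length iff each $M(\mu)$ is, iff each chain $(N_k)$ is finite, iff $\{n\ge 1:\mu(\z_n)=0\}$ is finite; and since $\z_0=0$ and $n\mapsto(-n)*\mu$ is injective, by Definition~\ref{Delements}(2) (recalling $\mu\circ\theta^n=(-n)*\mu$) this set is in bijection with $([\mu]\cap(-\Z_+*\mu))\setminus\{\mu\}$, giving the equivalence.

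Finally, the five conditions all follow from the classification applied to $M(n*\mu)$. First, (d)$\Leftrightarrow$(e) is immediate from $\z_{-n}:=\theta^{-n}(\z_n)$, since $\mu(\z_{-n})=(\mu\circ\theta^{-n})(\z_n)=(n*\mu)(\z_n)$. For $M(n*\mu)$, the submodule at index $k$ is isomorphic to $M((n-k)*\mu)$ and exists iff $k=0$ or $(n*\mu)(\z_k)=0$; taking $k=n$ — the only index giving a copy of $M(\mu)$, by freeness of the $\Z$-action — yields (c)$\Leftrightarrow$(d). The composition factors of $M(n*\mu)$ being the pairwise distinct $L((n-k)*\mu)$, every multiplicity in a composition series of $M(n*\mu)$ is $0$ or $1$, giving (a)$\Leftrightarrow$(b); and $[M(n*\mu):L(\mu)]>0$ iff $L(\mu)$ occurs iff $k=n$ is a valid index iff (d). Lastly (c)$\Rightarrow$(a) because $L(\mu)$ is the simple top of $M(\mu)$, hence a composition factor of any module containing it.

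I expect no genuine obstacle: the sole non-formal input is the identity $u\cdot d^n m_\mu=\mu(\z_n)d^{n-1}m_\mu$, which is exactly Proposition~\ref{Psmith1}, and the remainder is bookkeeping with the totally ordered lattice of submodules and the freeness of the $\Z$-action on $\hfree$. The one step worth spelling out in the write-up is that a submodule of $M(\mu)$ is a sum of weight lines — the standard separation-of-characters argument — after which everything is routine.
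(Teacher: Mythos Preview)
Your proof is correct. The paper does not actually supply a proof of this proposition: it states the result and omits the argument, referring to \cite{Kh2} for the details (see the sentence preceding Remark~\ref{Rverma}). The only hint given is that the result ``uses Proposition~\ref{Psmith1} to analyze Verma modules in detail,'' and your argument does precisely this: you extract the formula $u\cdot d^n m_\mu=\mu(\z_n)d^{n-1}m_\mu$ from the first identity in \eqref{Ecomp}, classify the submodules of $M(\mu)$ as the chain $\{N_k:\mu(\z_k)=0\}$, identify each $N_k$ with $M((-k)*\mu)$, and then read off uniseriality, the composition series, and the five equivalences by bookkeeping with the free $\Z$-action on $\hfree$. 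This is the natural and expected line of argument, and there are no gaps.
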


\noindent In particular, every submodule of a Verma module is a Verma
module. Moreover, the result also provides a ``BGG resolution'' of every
simple object $L(\mu)$: $M(\mu) = L(\mu)$ if $\dim L(\mu) = \infty$;
otherwise given $n_1$ as in Proposition \ref{Pverma}, we have
$0 \to M((-n_1)*\mu) \to M(\mu) \to L(\mu) \to 0$.

\begin{remark}
Observe that the restricted dual of every finite-dimensional highest
weight module $M(\mu) / M((-n_r) * \mu)$ (with notation as in Proposition
\ref{Pverma}, and some $r \geq 0$ such that $0 < n_r < \infty$) is a
\textit{lowest weight module}, generated by its lowest weight vector of
weight $(1-n_1) * \mu$. In particular for $r=1$, the simple
finite-dimensional module $L(\mu)$ is both a highest weight module and a
lowest weight module, akin to semisimple Lie algebras.
\end{remark}

\comment{
Our last result in this section collects together additional useful facts
about Verma modules. To state this result, recall that the {\em
Shapovalov form} $\tangle{-,-}$ is defined to be an $H$-valued bilinear
form on $A$, given by $\tangle{a,b} = \xi(i(a)b)$, where $\xi$ is the
\textit{Harish-Chandra projection} $: A = H \oplus (dA+Au) \to H$, and
$i$ is the fixed anti-involution on $A$ that fixes $H$ and interchanges
$u,d$.

\begin{prop}\label{Psmith2}
Suppose $A = \cals(H,\theta,z_0,z_1)$ as above. Then the Shapovalov form
(of every Verma module -- equivalently,) of $\F[d]$ is given by
$\tangle{d^m, d^n} = \delta_{m,n} \prod_{j=0}^{n-1} \z_j$ for all $m, n
\geq 0$. Moreover, given $\mu \in \hfree$, the following are equivalent:
\begin{enumerate}
\item There exists $\nu < \mu$ (i.e., $\nu = (-n) * \mu$ for some $n>0$)
such that\hfill\break $\dim \Hom_\calo(M(\nu),M(\mu)) = 1$.

\item There exists $\nu < \mu$ such that $\Hom_\calo(M(\nu),M(\mu)) \neq
0$.

\item There exists $\nu < \mu$ such that $M(\nu) \hookrightarrow M(\mu)$.

\item There exists $\nu < \mu$ such that $L(\nu)$ is a subquotient of
$M(\mu)$.

\item $M(\mu)$ is not simple.

\item There exists $n \in \N$, such that $\mu(\z_n) = 0$.

\item $L(\mu)$ is finite-dimensional.
\end{enumerate}
\end{prop}

\begin{proof}
The first assertion follows by repeatedly applying Equation
\eqref{Ecomp}. The equivalences in the second part are now shown by
standard arguments and using Proposition \ref{Pverma}.
\end{proof}
}

\subsection{Category $\calo$: projectives, blocks, and highest weight
categories}\label{SbggO}

The next step in proving Theorem \ref{Tkoszul} is to construct projective
modules in $\calo$. Note from the equivalences in Proposition
\ref{Pverma} that $[\lambda]$ is related to partitioning the set of
weights $\hfree$ (and hence, Category $\calo$) into blocks. Thus it is an
analogous notion to that of \textit{linkage} for semisimple Lie algebras,
as well as to \textit{Condition (S3)} in the axiomatic framework studied
in \cite{Kh2}. Note that these three notions coincide when $A =
\cals(H,\theta,z_0,z_1) = U(\lie{sl}_2)$.

We now recall additional standard constructions from \cite{Kh2}, for
which the following notation is required.

\begin{defn}
Set $A := \cals(H,\theta,z_0,z_1)$ as above.
Given $\lambda \in \widehat{H}$ and an integer $l \geq 0$, define
$P(\lambda,l) := A / (A u^l + A \cdot \ker \lambda)$, and
$\calo(\lambda,l) \subset \calo$ to be the full subcategory of all $M \in
\calo$ such that $u^l M_\lambda = 0$.
We also say that an object $X$ in $\calo$ has a \textit{(dual) Verma
flag} if $X$ has finite filtration in $\calo$ whose subquotients are
(restricted duals of) Verma modules.
\end{defn}

We now have the following standard results in the study of Category
$\calo$; we avoid the proofs as these results are shown in greater
generality in \cite{Kh2}.
\begin{enumerate}
\item $\calo_\N$ is a direct sum of \textit{blocks}:
\begin{equation}\label{Eblock}
\calo_\N = \bigoplus_{\mu \in (\widehat{H} \setminus \hfree)/\Z}
\calo_\N(\Z * \mu) \oplus \bigoplus_{[\lambda] \subset \hfree}
\calo_\N[\lambda].
\end{equation}

\noindent All morphisms and extensions between objects of distinct blocks
(i.e., distinct summands) are zero.

\item If $T \subset \widehat{H}$ is finite, then $\calo(T)$ is a finite
length, abelian $\F$-category.

\item For all finite $T \subset \hfree$ and all $\lambda \in
\widehat{H}$, there exists $l \geq 0$ such that $\calo(T) \subset
\calo(\lambda,l)$.

\item For all $l > 0$ and $\lambda \in \hfree$, $P(\lambda,l)$ is a
projective module in $\calo(\lambda,l)$.

\item Suppose $\calo$ is finite length and $[\lambda]$ is finite for some
$\lambda \in \hfree$. Fix $l \geq 0$ such that $\calo[\lambda] \subset
\calo(\lambda, l)$, and let $P(\lambda)$ be the direct summand
corresponding to the block $[\lambda]$, in the decomposition of
$P(\lambda,l)$ according to \eqref{Eblock}.
Then:
\begin{itemize}
\item $P(\lambda)$ is the projective cover of $L(\lambda)$ in
$\calo[\lambda]$ (and hence in $\calo$).
\item $P(\lambda)$ has a Verma flag, with Verma subquotients of the form
$M(\mu)$ for $\mu \in [\lambda]$.
\item $\calo[\lambda]$ has enough projectives and injectives.
\end{itemize}

\item If $\calo$ is finite length and $[\lambda]$ is finite for $\lambda
\in \hfree$, then $\calo[\lambda]$ is equivalent to finite-dimensional
(left) modules over a finite-dimensional quasi-hereditary algebra $\ba$.
In particular, it is a highest weight category (see \cite{CPS1}, as well
as \cite[Section 3]{Kh2} for further consequences) that satisfies
\textit{BGG Reciprocity}:
\[
[P(\mu) : M(\nu)] = [M(\nu) : L(\mu)], \quad \forall \mu \in [\lambda], \
\nu \in \hfree.
\]
\end{enumerate}

The algebra $\ba$ is obtained as follows: if $[\lambda] = \{ \lambda_1 <
\lambda_2 < \cdots < \lambda_n \} \subset \hfree$, and $P(\lambda_i)$ is
the projective cover of $L(\lambda_i)$ in $\calo[\lambda]$ (and hence in
$\calo$), then $\ba = \End_\calo(\bigoplus_{i=1}^n P(\lambda_i)^{\oplus
r_i})^{op}$, for any choice of positive integers $r_i$. Up to Morita
equivalence, we may choose $r_i = 1$ for all $i$.

We conclude this section by observing that the above standard facts prove
the first part of Theorem \ref{Tkoszul} except for the algebra
$\ba$ being graded Koszul. It is the goal of the following sections
to prove the remaining, more involved homological assertions in Theorems
\ref{Tkoszul}--\ref{Tproj}.

\section{Projectives and resolutions}\label{Sproj}

In this section and the next, we carry out the technical heart of the
computations needed to show the main results in this paper. We end the
section by proving Theorem \ref{Thom}.

The remainder of this paper operates under the following assumptions.

\begin{stand}\label{Stand3}
Henceforth assume that Assumption \ref{Stand2} holds, $\calo$ is finite
length, and the block $[\lambda]$ is finite for some $\lambda \in
\hfree$.
\end{stand}

We also set some \textbf{notation}. Enumerate the weights in the block as
follows: $[\lambda] = \{ \lambda_1 < \lambda_2 < \cdots < \lambda_n \}
\subset \hfree$. Given $1 \leq i, j \leq n$, we abuse notation and define
$\lambda_i - \lambda_j$ to be the (unique) integer $n$ such that $n *
\lambda_j = \lambda_i$. Recall also the notation of $L_i, M_i, P_i$ as in
Equation \eqref{Enotation}.

We begin by ascertaining the structure of every indecomposable projective
object $P(\lambda)$ in $\calo[\lambda]$.

\begin{prop}\label{Pproj}
We work in $\calo[\lambda]$. For all $1 \leq i \leq n$, $M_i$ has a
finite filtration
\[
M_i \supset M_{i-1} \supset \cdots \supset M_1 \supset 0,
\]

\noindent with successive subquotients $L_j$ for $1 \leq j \leq i$.
Dually, every $P_i$ has a finite filtration
\[
P_i \supset P_{i+1} \supset \cdots \supset P_n \supset 0,
\]

\noindent with successive subquotients $M_j$ for $i \leq j \leq n$.
Moreover, $\calo[\lambda] \subset \calo(\lambda_i, \lambda_n - \lambda_i
+ 1)$ for all $i$.
\end{prop}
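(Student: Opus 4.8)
My plan is to establish the three assertions in order, bootstrapping from the structure theory of Verma modules already in hand (Propositions \ref{Pverma} and \ref{Pext}) and from the standard facts about projectives in Section \ref{SbggO}.

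First I would handle the filtration of $M_i$. By Proposition \ref{Pverma}, $M_i = M(\lambda_i)$ is uniserial with composition factors $L(\mu)$ exactly for those $\mu \leq \lambda_i$ with $\mu \in [\lambda_i] = [\lambda]$; and each such $\mu$ occurs with multiplicity one. Since the block is $[\lambda] = \{\lambda_1 < \cdots < \lambda_n\}$, the weights $\mu \in [\lambda]$ with $\mu \leq \lambda_i$ are precisely $\lambda_1, \dots, \lambda_i$. The key point, from Proposition \ref{Pverma}(3), is that $M(\lambda_j) \hookrightarrow M(\lambda_i)$ whenever $j \leq i$ (both lie in $[\lambda]$, so $\lambda_i(\z_{-(\lambda_i-\lambda_j)}) = 0$ since $\lambda_j \in [\lambda]$). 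Uniseriality then forces these submodules to be totally ordered, giving the chain $M_i \supset M_{i-1} \supset \cdots \supset M_1 \supset 0$ with $M_j / M_{j-1}$ of length one, hence $\cong L_j$ (it has highest weight $\lambda_j$ and is a subquotient of a uniserial module, so it is simple). This part is essentially a repackaging of Proposition \ref{Pverma}.

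Next, the Verma flag of $P_i$. From fact (5) in Section \ref{SbggO}, $P_i = P(\lambda_i)$ has a Verma flag with subquotients among $M(\mu)$, $\mu \in [\lambda]$, and by BGG Reciprocity the multiplicity of $M(\mu)$ in this flag equals $[M(\mu):L_i]$. By Proposition \ref{Pverma} this is $1$ if $\lambda_i \leq \mu$ (i.e. $\mu \in \{\lambda_i, \dots, \lambda_n\}$) and $0$ otherwise. So $P_i$ has a Verma flag with subquotients $M_i, M_{i+1}, \dots, M_n$ each once. To pin down the \emph{order} of the flag — that it can be arranged as $P_i \supset P_{i+1} \supset \cdots \supset P_n \supset 0$ with $P_i / P_{i+1} \cong M_i$ at the top — I would argue as follows: $P_i$ has $L_i$ in its head, and the standard highest-weight-category fact is that $M_i$ is a quotient of $P_i$; let $P_{i+1}$ denote the kernel, so $P_i/P_{i+1} \cong M_i$ and $P_{i+1}$ inherits a Verma flag with subquotients $M_{i+1}, \dots, M_n$. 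The remaining task is to identify $P_{i+1}$ (the submodule of $P_i$) with the module earlier called $P_{i+1} = P(\lambda_{i+1})$. For this I would use Proposition \ref{Pext}: the head of $P_{i+1}$ can only involve $L_j$ with $\Ext^1_\calo(L_i, L_j) \neq 0$, which by Proposition \ref{Pext} forces $j = i \pm 1$ within $[\lambda]$; combined with the Verma-flag constraint (lowest weight subquotient is $M_n$, top is $M_{i+1}$) and a projectivity/minimality count of Verma multiplicities, one concludes $P_{i+1} \cong P(\lambda_{i+1})$, and then induct downward on $i$ (base case $i = n$: $P_n$ has a Verma flag with the single subquotient $M_n$, so $P_n = M_n$, or rather $P(\lambda_n) = M(\lambda_n)$ since $\lambda_n$ is maximal). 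I expect \textbf{this identification step — showing the submodule $P_{i+1}$ of $P_i$ is isomorphic to the independently defined projective cover $P(\lambda_{i+1})$ — to be the main obstacle}, since a priori one only knows it has the right Verma flag, and one must rule out other modules (e.g. decomposable ones, or the "wrong" indecomposable) with the same flag; the Ext-vanishing from Proposition \ref{Pext} plus the uniqueness of projective covers is what makes it go through.

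Finally, the containment $\calo[\lambda] \subset \calo(\lambda_i, \lambda_n - \lambda_i + 1)$. Here I would use that $\calo[\lambda]$ is generated (in the sense of having a projective generator) by $\bigoplus_j P(\lambda_j)$, so it suffices to check $u^{\lambda_n - \lambda_i + 1}$ kills the $\lambda_i$-weight space of each $P(\lambda_j)$ — equivalently of each Verma subquotient $M(\mu)$ with $\mu \in [\lambda]$, $\mu \geq \lambda_j$. For such $M(\mu)$, the $\lambda_i$-weight space is $\F \cdot d^{\,\mu - \lambda_i} m_\mu$ if $\lambda_i \leq \mu$ and $0$ otherwise. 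Applying Proposition \ref{Psmith1} (the first identity in \eqref{Ecomp}, iterated), $u^t \cdot d^{\,\mu-\lambda_i} m_\mu$ lies in the span of $d^{\,\mu-\lambda_i-t} m_\mu$ (weight $t*\lambda_i$), which vanishes in $M(\mu)$ once $t * \lambda_i > \mu$, i.e. once $t > \mu - \lambda_i$. Since $\mu \leq \lambda_n$, we have $\mu - \lambda_i \leq \lambda_n - \lambda_i$, so $t = \lambda_n - \lambda_i + 1$ suffices uniformly. This is a direct computation with \eqref{Ecomp} and the weight-space description of Verma modules, and I do not expect difficulty here.
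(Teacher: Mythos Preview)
Your argument for the Verma filtration of $M_i$ matches the paper's (both defer to Proposition~\ref{Pverma}). For the containment $\calo[\lambda] \subset \calo(\lambda_i, \lambda_n - \lambda_i + 1)$, the paper simply cites \cite[Section 3]{Kh2}; your direct argument is fine in spirit, but your reduction ``equivalently of each Verma subquotient $M(\mu)$'' is not valid as stated---the condition $u^l M_{\lambda_i} = 0$ does not pass to extensions. The clean fix is to bypass the Verma flag entirely: for any $M \in \calo[\lambda]$ one has $\wt(M) \subset \{\mu \in \Z * \lambda : \mu \leq \lambda_n\}$, so $u^l M_{\lambda_i} \subset M_{l*\lambda_i} = 0$ once $l > \lambda_n - \lambda_i$.

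The interesting divergence is in the projective filtration. The paper does \emph{not} argue via BGG reciprocity and head analysis; instead it works with the explicit module $P(\lambda_i, l)$ for $l = \lambda_n - \lambda_i + 1$, exhibits the filtration $N_k := A u^{l-k} p_{\lambda_i}$ with $N_k \cong P((l-k)*\lambda_i, k)$ and $N_k/N_{k-1} \cong M((l-k)*\lambda_i)$, and then passes to the $[\lambda]$-block component of each term. This directly produces the chain $P_n \subset P_{n-1} \subset \cdots \subset P_i$ together with the explicit description $p_{i+1} = u^{\lambda_{i+1}-\lambda_i} p_i$ of the generators (Remark~\ref{Rgen}), which is used repeatedly later (e.g.\ in Proposition~\ref{Pprojmaps} and Lemma~\ref{Lverma}). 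Your inductive approach via projective covers is more conceptual and would work in any highest weight category with the right Ext pattern, but it does not give this explicit embedding for free.

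Your identification step also has a genuine gap. You claim the head of $K := \ker(P_i \twoheadrightarrow M_i)$ can only involve $L_j$ with $\Ext^1_\calo(L_i, L_j) \neq 0$; but $K$ is not $\Rad(P_i)$, and in general the head of a submodule is not contained in the head of the ambient module. What is true is that $\Hom_\calo(K, L_j) \cong \Ext^1_\calo(M_i, L_j)$ (from the long exact sequence for $0 \to K \to P_i \to M_i \to 0$), and you then need $\Ext^1_\calo(M_i, L_j) = 0$ for $j \neq i+1$. This is not Proposition~\ref{Pext}. It can be shown directly by lifting the highest weight vector of $M_i$ in an extension $0 \to L_j \to E \to M_i \to 0$ and checking that $u$ annihilates the lift (because $1*\lambda_i$ is a weight of $L_j$ only when $j = i+1$), but you should carry this out rather than invoke Proposition~\ref{Pext}. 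Once the head is pinned down as $L_{i+1}$, your length count via BGG reciprocity finishes the identification $K \cong P(\lambda_{i+1})$.
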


\begin{proof}
The filtration of each Verma module $M_i$ is discussed in Proposition
\ref{Pverma}. Next, $\calo[\lambda] \subset \calo(\lambda_i, \lambda_n -
\lambda_i + 1)$ for all $i$ by \cite[Section 3]{Kh2}. Therefore the
$[\lambda]$-summand of $P(\lambda_i, \lambda_n - \lambda_i + 1)$ is
precisely $P_i$, from above.
We now consider the structure of $P(\lambda_i, l)$ for any $l>0$ and $1
\leq i \leq n$: if $p_{\lambda_i}$ is the image of $1$ in $P(\lambda_i,
n)$, then
\[
0 = N_0 \subset N_1 \subset N_2 \subset \cdots \subset N_l = P(\lambda_i,
l) = A p_{\lambda_i},
\]

\noindent where $N_k := A u^{l-k} p_{\lambda_i}$. It is then easy to
verify by comparing formal characters that $N_k = P((l-k) *
\lambda_i,k)$, and that $N_k / N_{k-1} \cong M((l-k) * \lambda_i)\
\forall 1 \leq k \leq l$.

Now set $l := \lambda_n - \lambda_i + 1$. Also let $N_k[\lambda]$ denote
the $[\lambda]$-component of $N_k$ under the decomposition \eqref{ES3}.
Then $N_k[\lambda] = N_{k-1}[\lambda]$ unless that particular subquotient
-- namely, $N_k / N_{k-1} = M((l-k) * \lambda_i)$ -- equals $M_j$ for
some $j \geq i$. Otherwise if $N_k / N_{k-1} \cong M_j$, then
$N_k[\lambda] / N_{k-1}[\lambda] = N_k / N_{k-1} \cong M_j$, by repeated
applications of Proposition \ref{Pext} in the abelian category $\calo =
\calo_\N$.

Since $P(\lambda_i) = P_i$ is the $[\lambda]$-summand of $P(\lambda_i,
\lambda_n - \lambda_i + 1)$ for all $i$, we thus obtain the following
commuting sequence, by considering only those $N_k$'s in $P(\lambda_i,
\lambda_n - \lambda_i + 1)$ which correspond to some $\lambda_j$ for
$j>i$:
\[
 \begin{CD}
  0  @>>>    P_n @>>> P_{n-1} @>>> \cdots @>>> P_i\\
  @VVV       @VVV     @VVV         @.         @VVV\\
  0  @>>>    P(\lambda_n, 1)   @>>> P(\lambda_{n-1}, \lambda_n -
  \lambda_{n-1} + 1)      @>>> \cdots @>>> P(\lambda_i, \lambda_n -
  \lambda_i + 1)\\
 \end{CD}
\]

\noindent Over here, all arrows are inclusions, and the subquotients in
the top row are Verma modules $M_j$ for $i \leq j \leq n$. Moreover, each
vertical arrow represents the inclusion of the corresponding
$[\lambda]$-summand, which concludes the proof.
\end{proof}

\begin{remark}\label{Rgen}
In fact, if $p_i$ is the image of $1$ in $P(\lambda_i, \lambda_n -
\lambda_i + 1)$ (and hence the generator of its quotient $P_i$ as well),
then it is easy to check that $u^{\lambda_{i+1} - \lambda_i} p_i$ is the
image of the generator $\overline{1}$ in $P(\lambda_{i+1}, \lambda_n -
\lambda_{i+1} + 1)$.
Also note that if we reverse both vertical arrows or the right-hand
vertical arrow in any commuting square in the diagram (by the
corresponding projection maps onto the $[\lambda]$-summands), then we
still obtain a commuting square.
\end{remark}

The following result provides a projective resolution in $\calo$ of every
highest weight module.

\begin{prop}\label{Presol}
Suppose $0 < s < r \leq n$. Then the following is a projective resolution
of the highest weight module $M_r / M_s$ in $\calo$:
\begin{equation}
0 \to P_{s+1} \to P_s \oplus P_{r+1} \to P_r \to M_r / M_s \to 0,
\end{equation}

\noindent with the understanding that $P_{n+1} = 0$. If $0 = s < r \leq
n$, then the Verma module $M_r$ has a projective resolution:
\begin{equation}
0 \to P_{r+1} \to P_r \to M_r \to 0, \quad \forall 1 \leq r \leq n.
\end{equation}
\end{prop}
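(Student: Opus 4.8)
The plan is to build the resolution from the bottom up, starting with the Verma case, which is essentially immediate from the material already developed. For $0 = s < r$, recall from Proposition~\ref{Pverma} (and the ``BGG resolution'' remark following it) that $M_r$ has unique maximal submodule $M((-n_1)*\lambda_r)$, and the linkage/block structure forces this submodule to be exactly $M_{r-1}$ when $r \geq 2$, and $0$ when $r=1$; in all cases the unique maximal submodule of $M_r$ inside $\calo[\lambda]$ is $M_{r-1}$. Wait -- more carefully, the composition series of $M_r$ has top $L_r$, so the maximal submodule is the one with subquotients $L_1, \dots, L_{r-1}$, which by the filtration in Proposition~\ref{Pproj} is $M_{r-1}$. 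Thus there is a short exact sequence $0 \to M_{r-1} \to M_r \to L_r \to 0$, but that is not quite what we want; instead I would use the Verma flag of $P_r$ from Proposition~\ref{Pproj}: $P_r \supset P_{r+1} \supset 0$ with $P_r / P_{r+1} \cong M_r$. Since $P_{r+1}$ is projective (it is a summand of a $P(\mu, l)$), this displays $0 \to P_{r+1} \to P_r \to M_r \to 0$ as a projective resolution, proving the second assertion. (For $r = n$ this reads $0 \to 0 \to P_n \to M_n \to 0$, consistent with $P_n \cong M_n$.)

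For the general case $0 < s < r \leq n$, the strategy is to splice two such Verma resolutions together using the short exact sequence $0 \to M_s \to M_r \to M_r/M_s \to 0$ -- here $M_s \hookrightarrow M_r$ because $M_s$ appears as the bottom of the Verma filtration of $M_r$ in Proposition~\ref{Pproj}. We have the projective presentation $P_r \twoheadrightarrow M_r$ with kernel $P_{r+1}$, and $P_s \twoheadrightarrow M_s$ with kernel $P_{s+1}$. The composite $P_r \twoheadrightarrow M_r \twoheadrightarrow M_r/M_s$ has kernel equal to the preimage of $M_s \subset M_r$ under $P_r \twoheadrightarrow M_r$; this preimage is an extension $0 \to P_{r+1} \to K \to M_s \to 0$. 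I would identify $K$ explicitly: the natural candidate is $K = $ the pullback, and I claim $K \cong$ a submodule of $P_s \oplus P_{r+1}$ surjecting onto $M_s$. Concretely, since $P_s \twoheadrightarrow M_s$ is a projective cover, lift the map $P_s \to M_s$ against $K \twoheadrightarrow M_s$ to get $P_s \to K$; combined with $P_{r+1} \hookrightarrow K$ this gives a map $P_s \oplus P_{r+1} \to K$, which is surjective (it hits $M_s$ and all of $P_{r+1}$). Its kernel should be $P_{s+1}$, embedded antidiagonally: the kernel of $P_s \to M_s$ is $P_{s+1}$, and one checks the composite $P_{s+1} \hookrightarrow P_s \to K$ lands in $P_{r+1} \subset K$ via a map $P_{s+1} \to P_{r+1}$, so the kernel of $P_s \oplus P_{r+1} \to K$ is the graph $\{(x, -\phi(x)) : x \in P_{s+1}\} \cong P_{s+1}$. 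Assembling: $0 \to P_{s+1} \to P_s \oplus P_{r+1} \to P_r \to M_r/M_s \to 0$ is exact, with all of $P_{s+1}, P_s, P_{r+1}, P_r$ projective.

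An alternative, perhaps cleaner, route -- which I would also sketch as a cross-check -- is to observe that $M_r/M_s$ has a two-step Verma flag $0 \to L$-flag, i.e.\ a short exact sequence coming from $M_s \subset M_r$, and to invoke the horseshoe lemma: given projective resolutions $0 \to P_{s+1} \to P_s \to M_s \to 0$ and $0 \to P_{r+1} \to P_r \to M_r \to 0$ of the sub and quotient respectively, the horseshoe lemma produces a length-one projective resolution $0 \to P_{s+1}\oplus P_{r+1} \to P_s \oplus P_r \to M_r \to 0$ of $M_r$ compatible with the filtration; but we want a resolution of $M_r/M_s$, not of $M_r$. So instead one takes the resolution $0 \to P_{r+1} \to P_r \to M_r \to 0$ and the \emph{shifted} resolution of $M_s$, and forms the mapping cone of the chain map lifting $M_s \hookrightarrow M_r$; the cone computes $M_r/M_s$ up to the usual sign and gives precisely $0 \to P_{s+1} \to P_s \oplus P_{r+1} \to P_r \to M_r/M_s \to 0$ after checking that the augmentation $P_s \to M_s \to M_r$ has image inside $\ker(P_r \to \text{stuff})$, i.e.\ that the lift exists -- which it does since $P_s$ is projective.

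The main obstacle I anticipate is not the homological algebra but the bookkeeping of the maps: one must verify that the connecting map $P_{s+1} \to P_{r+1}$ (equivalently, the differential $P_s \oplus P_{r+1} \to P_r$ restricted appropriately) is the ``right'' one so that exactness holds at every spot -- in particular exactness at $P_s \oplus P_{r+1}$, which amounts to showing $\ker(P_s\oplus P_{r+1}\to P_r)$ is exactly the image of $P_{s+1}$ and no larger. This is where one uses that $P_s \twoheadrightarrow M_s$ and $P_r \twoheadrightarrow M_r$ are \emph{projective covers} (so the lifts have minimal-sized kernels), together with the explicit generators $p_i$ and the relation $u^{\lambda_{i+1}-\lambda_i} p_i = p_{i+1}$ from Remark~\ref{Rgen}, which pins down the maps between the $P_i$'s up to scalar. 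A dimension/character count using the Verma flags of the $P_i$ from Proposition~\ref{Pproj} then confirms exactness: the alternating sum of formal characters of $P_r, P_s\oplus P_{r+1}, P_{s+1}$ must equal that of $M_r/M_s$, which it does since $\operatorname{ch} M_r/M_s = \operatorname{ch} M_r - \operatorname{ch} M_s$ and $\operatorname{ch} P_i = \sum_{j\geq i}\operatorname{ch} M_j$.
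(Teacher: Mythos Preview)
Your proposal is correct and follows essentially the same route as the paper: identify the kernel $K$ of $P_r \twoheadrightarrow M_r/M_s$ as an extension $0 \to P_{r+1} \to K \to M_s \to 0$, lift $P_s \twoheadrightarrow M_s$ through $K$ by projectivity, and show the resulting surjection $P_s \oplus P_{r+1} \to K$ has kernel the antidiagonal copy of $P_{s+1}$. The only difference is that the paper dispatches your ``bookkeeping'' concern directly rather than via a character count: an element $(m,n)$ lies in the kernel iff $\varphi(n) = \iota(m) \in P_{r+1}$, iff the image of $n$ in $M_s$ vanishes, iff $n \in \ker(P_s \to M_s) = P_{s+1}$; so no dimension argument is needed.
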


\begin{proof}
We begin with the following observation:
\begin{equation}
\Hom_\calo(P_i, L_j) = \Hom_\calo(M_i, L_j) = \Hom_\calo(L_i, L_j) =
\delta_{i,j} \F, \quad \forall 1 \leq i,j \leq n.
\end{equation}

Now note first that the theorem holds for all Verma modules $M_r$ by
Proposition \ref{Pproj}. Thus, for the remainder of the proof we fix $0 <
s < r \leq n$.
Suppose $0 \to K \to P_r \to M_r / M_s \to 0$;
then the kernel $K$ equals the lift to $P_r$ of $M_s \subset M_r =
P_r / P_{r+1}$. In other words,
\[
0 \to P_{r+1} \mapdef{\iota} K \mapdef{\pi} M_s \to 0.
\]

\noindent Next, the surjection $P_s \twoheadrightarrow M_s$ factors
through a map $: P_s \mapdef{\varphi} K \mapdef{\pi} M_s$, since $P_s$ is
projective. Thus, define a map $f : P_{r+1} \oplus P_s \to K$ via:
$f(m,n) := \varphi(n) - \iota(m)$. This is a morphism in
$\calo[\lambda]$, and given $k \in K$, choose $n \in P_s$ such that
$\varphi(n) - k \in P_{r+1}$. Then $k = f(\varphi(n) - k, n)$, whence $f$
is a surjection.

It remains to compute the kernel of $f$, which equals $\{ (\varphi(n),n)
: \varphi(n) \in P_{r+1} \}$. Now $\varphi(n) \in P_{r+1}$ if and only if
$\pi(\varphi(n)) = 0$, if and only if (from the above factoring of the
surjection) $n$ is in the kernel of $P_s \twoheadrightarrow M_s$, if and
only if $n \in P_{s+1}$ (by Proposition \ref{Pproj}). But then the map
$\psi : P_{s+1} \to \ker(f)$, given by $\psi(n) := (\varphi(n),n)$, is an
$A$-module isomorphism, since $\varphi$ is an $A$-module map. This yields
the required projective resolution of $M_r / M_s$ in $\calo$.
\end{proof}

An easy consequence of Proposition \ref{Presol} is that we can now
compute all $\Ext$-groups between simple objects in the block
$\calo[\lambda]$ (and hence, in $\calo$), as shown presently. Indeed, the
\textit{Hilbert matrix} of the block $\calo[\lambda]$ is defined to be 
\begin{equation}\label{Ehilbert}
H(E(\ba),t)_{i,j} := \sum_{l \geq 0} t^l \dim \Ext^l_\calo(L_i, L_j).
\end{equation}

\noindent The matrix $H(E(\ba),t)$ encodes homological information
in the block $\calo[\lambda]$, and can now be computed explicitly:

\begin{cor}\label{Cext}
For all $1 \leq i,j \leq n$ and $l \geq 0$, $\Ext^l_\calo(L_i, L_j)$
satisfies the formula stated in Theorem \ref{Thom}.
In particular, the Hilbert matrix of $\calo[\lambda]$ (or of $E(\ba)$) is
the following symmetric tridiagonal $n \times n$ matrix with determinant
$1$:
\begin{equation}
H(E(\ba),t) =
  \begin{pmatrix}
    1 & t & 0 & \cdots & 0 & 0\\
    t & 1+t^2 & t & \cdots & 0 & 0\\
    0 & t & 1+t^2 & \cdots & 0 & 0\\
    \vdots & \vdots & \vdots & \ddots & \vdots & \vdots\\
    0 & 0 & 0 & \cdots & 1+t^2 & t\\
    0 & 0 & 0 & \cdots & t & 1+t^2
  \end{pmatrix}.
\end{equation}
\end{cor}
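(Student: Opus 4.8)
The plan is to deduce Corollary \ref{Cext} directly from the projective resolutions in Proposition \ref{Presol}, applied with the quotient $L_i = M_i / M_{i-1}$ (which is a highest weight module of the form $M_r/M_s$ with $r=i$, $s=i-1$, once $i \geq 2$; the case $i=1$ uses the second resolution $0 \to P_2 \to P_1 \to M_1 = L_1 \to 0$). First I would record the basic $\Hom$-computation already stated at the start of the proof of Proposition \ref{Presol}, namely $\Hom_\calo(P_i, L_j) = \delta_{i,j}\F$, which is what makes the resolution into an $\Ext$-computing tool: applying $\Hom_\calo(-, L_j)$ to the (minimal) projective resolution of $L_i$ kills all differentials, so $\Ext^l_\calo(L_i, L_j)$ is simply the $\F$-dimension of $\Hom_\calo(P^l, L_j)$, where $P^\bullet$ is the relevant resolution.

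Concretely, for $2 \leq i \leq n$ the resolution of $L_i = M_i/M_{i-1}$ reads
\[
0 \to P_i \to P_{i-1} \oplus P_{i+1} \to P_i \to L_i \to 0,
\]
(taking $r=i$, $s=i-1$ in Proposition \ref{Presol}, so $s+1 = i$, $r+1 = i+1$), and for $i=1$ it reads $0 \to P_2 \to P_1 \to L_1 \to 0$. Thus I would tabulate: $\Ext^0_\calo(L_i,L_j) = \Hom_\calo(P_i, L_j) = \delta_{i,j}\F$; $\Ext^1_\calo(L_i,L_j) = \Hom_\calo(P_{i-1}\oplus P_{i+1}, L_j)$, which is $\F$ exactly when $j \in \{i-1, i+1\}$, i.e. when $|i-j|=1$ (for $i=1$ this is $\Hom_\calo(P_1,L_j)$, wait — for $i=1$ the degree-one term is $P_2$, giving $\F$ iff $j=2 = i+1$, consistent with $|i-j|=1$); $\Ext^2_\calo(L_i, L_j) = \Hom_\calo(P_i, L_j) = \delta_{i,j}\F$ when $i \geq 2$, and is $0$ when $i=1$ since the resolution of $L_1$ has no degree-$2$ term; and $\Ext^l = 0$ for $l \geq 3$ since the resolutions have length $\leq 2$. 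This matches exactly the four cases in the displayed formula of Theorem \ref{Thom}. Finally, assembling $H(E(\ba),t)_{i,j} = \sum_l t^l \dim\Ext^l_\calo(L_i,L_j)$ gives $1$ on the diagonal from $l=0$, plus $t^2$ on the diagonal from $l=2$ except at $i=j=1$, plus $t$ on the off-diagonal entries $|i-j|=1$ from $l=1$; this is precisely the stated symmetric tridiagonal matrix, and its determinant can be computed by the standard three-term recurrence $D_n = (1+t^2)D_{n-1} - t^2 D_{n-2}$ with $D_1 = 1$, $D_2 = (1+t^2) - t^2 = 1$, giving $D_n = 1$ for all $n$ by induction.

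The only genuine subtlety — and the step I'd flag as the main point to verify carefully rather than the "hard part" — is the boundary behaviour at the indices $1$ and $n$: one must use the convention $P_{n+1}=0$ (so that the resolution of $M_n/M_{n-1} = L_n$ becomes $0 \to P_n \to P_{n-1} \to P_n \to L_n \to 0$, still with a degree-$2$ term $P_n$, which is why $\Ext^2_\calo(L_n,L_n) = \F$ survives) and the separate short resolution at $i=1$ (which has no degree-$2$ term, which is exactly why the case "$i=j\neq 1$ and $l=2$" in Theorem \ref{Thom} excludes $i=1$). Everything else is a mechanical transcription, so no further obstacle is expected; I would just remark that the resulting matrix being unipotent-tridiagonal with determinant $1$ is consistent with $E(\ba)$ being Koszul and of the expected Hilbert series, foreshadowing the proof of Theorem \ref{Tkoszul} in Section \ref{S5}.
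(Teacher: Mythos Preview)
Your proposal is correct and follows essentially the same approach as the paper, which also derives the result from Proposition \ref{Presol} (together with Proposition \ref{Pext}) and computes the determinant by the tridiagonal recurrence via expansion along the last row. Your observation that the induced differentials in $\Hom_\calo(-,L_j)$ vanish is immediate since the projective summands at adjacent stages have pairwise distinct indices, so the source and target of each induced map are never simultaneously nonzero; this makes the minimality remark unnecessary (though correct), and the boundary checks at $i=1$ and $i=n$ are handled exactly as you describe.
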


\noindent The corollary follows easily from Propositions \ref{Pext} and
\ref{Presol}; for sake of brevity, we do not elaborate further here, as
the steps are similar to those in proving Theorem \ref{Tproj}(1)
below. Note that the determinant of the given $n \times n$ matrix can be
computed by induction on $n$ and expanding along the last row.

We now prove additional homological properties of the block
$\calo[\lambda]$. Note by Proposition \ref{Pproj} that inside each module
$P_j / P_k$ for $1 \leq j < k \leq n+1$, sits a copy of the Verma module
$M_{k-1}$. Thus, $\dim \Hom_\calo(M_i, P_j / P_k) \geq {\bf 1}(i < k)$
for $1 \leq i \leq n$. We now show that this inequality is actually an
equality -- namely, that inside each projective cover, there is at most
one maximal vector of each possible weight $\lambda_i$. In particular,
this helps prove one of our main results.

\begin{proof}[Proof of Theorem \ref{Thom}]
The assertions prior to Equation \eqref{Eext} were shown in Proposition
\ref{Pproj} and Corollary \ref{Cext}. We next claim that
\begin{equation}\label{Emult}
\dim \Hom_\calo(P_i, M) = [M : L_i], \ \forall M \in \calo, \ 1 \leq i
\leq n.
\end{equation}

\noindent The equation holds because the functions $\dim \Hom_\calo(P_i,
-)$ and $[- : L_i]$ are additive on short exact sequences, and both equal
$\delta_{ij}$ when evaluated at a simple object $L_j$.

The heart of the proof involves showing the first assertion in Equation
\eqref{Eext}. For this, we first \textbf{claim} that $\dim
\Hom_\calo(M(\mu), P(\lambda,l)) \leq 1$ for all $\lambda,\mu \in \hfree$
and all integers $l \geq 1$. The claim is obvious if $\lambda \notin \Z *
\mu$. Now suppose $\mu = n_0 * \lambda$ for some $n_0 \in \Z$, and define
$\max(\lambda,\mu)$ to be $\mu$ if $n_0 \geq 0$, and $\lambda$ otherwise.
Let $m_\lambda, m_\mu \in \Z_+$ denote the unique integers such that
$m_\lambda * \lambda = m_\mu * \mu = \max(\lambda,\mu)$; then $n_0 =
m_\lambda - m_\mu$. Note that if $\Hom_\calo(M(\mu), P(\lambda,l)) \neq
0$ then $\mu < l * \lambda$. Now verify that $P(\lambda,l)_\mu$ is
spanned by
\[
\{ d^{m_\mu + i} u^{m_\lambda + i} \overline{1}_\lambda : 0 \leq i \leq l
- 1 - \max(\lambda,\mu) \},
\]

\noindent where  $\overline{1}_\lambda$ is the generating vector in (the
definition of) $P(\lambda,l)_\lambda$. (We use here that $d^{m_\mu + i}
u^{m_\lambda + i}$ kills $\overline{1}_\lambda$ if $i \geq l -
\max(\lambda,\mu)$.) Use Proposition \ref{Pakaki} to conclude that $V :=
P(\lambda,l)_\mu$ is a finite-dimensional quotient of $A[m_\lambda -
m_\mu]$, and hence a finite-dimensional $\F[du]$-module. It follows that
$\dim \coker(du|_V) = \dim \ker(du|_V) \leq 1$. Now $\ker(u|_V) \subset
\ker (du|_V)$, so $P(\lambda,l)_\mu$ has at most one maximal vector (up
to scalar multiples). This proves the claim.

The next step is to note that since $M_i \hookrightarrow M_n = P_n
\hookrightarrow P_j$ for all $1 \leq j \leq n$, hence by the claim,
\[
1 \leq \dim \Hom_\calo(M_i, P_j) \leq \dim \Hom_\calo(M_i, P(\lambda_j,
\lambda_n - \lambda_j + 1)) \leq 1.
\]

\noindent Thus all inequalities are equalities, and $\dim
\Hom_\calo(M_i,P_j) = 1$ for all $i,j$.

We now compute $\Ext^1_\calo(M_i,P_j)$. Note that, given any object $X$
in $\calo[\lambda]$, applying the functor $\Hom_\calo(-, X)$ to the short
exact sequence $0 \to P_{i+1} \to P_i \to M_i \to 0$ yields the long
exact sequence:
\[
0 \to \Hom_\calo(M_i, X) \to \Hom_\calo(P_i, X) \to \Hom_\calo(P_{i+1},
X) \to \Ext_\calo^1(M_i, X) \to \Ext_\calo^1(P_i, X) \to \cdots
\]

\noindent The last term is zero since $P_i$ is projective. (Also note
that all higher $\Ext$-groups are zero.) Thus the Euler characteristic of
the terms listed above is zero, which yields via
\eqref{Emult}:
\begin{equation}\label{Eakaki}
\dim \Ext_\calo^1(M_i,X) = [X : L_{i+1}] - [X : L_i] + \dim
\Hom_\calo(M_i,X), \quad \forall 1 \leq i \leq n, \ X \in
\calo[\lambda].
\end{equation}

\noindent Now apply Equation \eqref{Eakaki} for $X = P_j$; then there are
two cases. First, if $1 \leq i < j$, then $[P_j : L_i] = [P_j : L_{i+1}]$
by Proposition \ref{Pproj}, so by Equation \eqref{Eakaki},
\[
\dim \Ext_\calo^1(M_i, P_j) = \dim \Hom_\calo(M_i, P_j) = 1.
\]

\noindent Similarly, if $i \geq j$, then  $[P_j : L_i] = [P_j : L_{i+1}]
+ 1$, whence $\dim \Ext_\calo^1(M_i, P_j) = 0$.

We now show the results for $\Hom_\calo(M_i, P_j/P_k)$ and
$\Ext^1_\calo(M_i, P_j/P_k)$ simultaneously. We assume below that $k \in
(j,n+1)$, since the $k=n+1$ case follows from the above analysis. First
suppose that $i \geq k$, and apply $\Hom_\calo(M_i,-)$ to the short exact
sequence $0 \to P_k \to P_j \to P_j/P_k \to 0$, to obtain:
\[
0 \to \Hom_\calo(M_i, P_k) \to \Hom_\calo(M_i, P_j) \to \Hom_\calo(M_i,
P_j/P_k) \to \Ext^1_\calo(M_i,P_k) \to \cdots
\]

\noindent Since the last term is zero from above, computing the Euler
characteristic via the above analysis yields: $\Hom_\calo(M_i, P_j/P_k) =
0$. Now use Equation \eqref{Eakaki} with $X = P_j/P_k$ to conclude that
$\Ext^1_\calo(M_i, P_j/P_k) = 0$ for all $1 \leq j < k \leq i$.

We next carry out a similar analysis for $i < k$, applying
$\Hom_\calo(M_i,-)$ to the short exact sequence $0 \to P_k \to P_j \to
P_j/P_k \to 0$, to obtain:
\begin{align}\label{Elong}
0 \to \Hom_\calo(M_i, P_k) \to & \Hom_\calo(M_i, P_j) \to \Hom_\calo(M_i,
P_j/P_k)\notag\\
\to & \Ext^1_\calo(M_i,P_k) \to \Ext^1_\calo(M_i,P_j) \to \cdots
\end{align}

\noindent There are now two sub-cases:
\begin{enumerate}
\item First suppose that $j \leq i < k$. Since the last term in
\eqref{Elong} is zero from above, computing the Euler characteristic via
the above analysis yields: $\Hom_\calo(M_i, P_j/P_k) = 1$. Now use
Equation \eqref{Eakaki} with $X = P_j/P_k$ to get: $\Ext^1_\calo(M_i,
P_j/P_k) = 0$ for $1 \leq j \leq i < k$.

\item If instead $i < j$, then first note that $\Hom_\calo(M_i,
P_j/P_k) \neq 0$. Additionally, in the long exact sequence \eqref{Elong},
the first two terms are one-dimensional, whence
\[
0 \neq \Hom_\calo(M_i, P_j/P_k) = \ker \left( \Ext^1_\calo(M_i,P_k) \to
\Ext^1_\calo(M_i,P_j) \right).
\]
\end{enumerate}

\noindent Now since $\Ext^1_\calo(M_i,P_k)$ is one-dimensional, it
follows that so is $\Hom_\calo(M_i, P_j/P_k)$. Finally, apply Equation
\eqref{Eakaki} with $X = P_j/P_k$ to get: $\dim \Ext^1_\calo(M_i,P_j/P_k)
= 1$ if $1 \leq i < j < k$.\medskip

Thus we have shown the first assertion in \eqref{Eext}. The second
assertion is clear for $l \geq 2$; in fact, $\Ext^l_\calo(P_j/P_k, X) =
0$ for all $X \in \calo[\lambda]$ and $l \geq 2$.

It remains to show the second assertion in \eqref{Eext} for $l=0,1$.
First note that if $\varphi : P_j/P_k \to M_r/M_s$ is nonzero, then the
generating vector $\overline{1}_{\lambda_j} \in P_j/P_k$ maps to a
nonzero weight vector in $M_r/M_s$ of weight $\lambda_j$. Therefore
$\lambda_s < \lambda_j \leq \lambda_r$, i.e., $s < j \leq r$. Moreover,
any such nonzero homomorphism is unique since $\dim (M_r/M_s)_{\lambda_j}
\leq 1$. This shows that $\dim \Hom_\calo(P_j/P_k, M_r/M_s) = {\bf 1}(s <
j \leq r)$, as claimed. Finally, apply $\Hom_\calo(-, M_r/M_s)$ to the
short exact sequence
\[
0 \to P_k \to P_j \to P_j / P_k \to 0
\]

\noindent to obtain the long exact sequence:
\begin{align*}
0 \to \Hom_\calo(P_j / P_k, M_r/M_s) & \to \Hom_\calo(P_j, M_r/M_s) \to
\Hom_\calo(P_k, M_r/M_s)\\
& \to \Ext^1_\calo(P_j / P_k, M_r/M_s) \to \Ext^1_\calo(P_j, M_r/M_s) \to
\cdots
\end{align*}

\noindent Since the last term is zero, and the Euler characteristic of
this terms of the sequence displayed above is zero as well, we compute
using Equation \eqref{Emult}:
\begin{align*}
\dim \Ext^1_\calo(P_j/P_k, M_r/M_s) = &\ [M_r/M_s : L_k] - [M_r/M_s :
L_j] + {\bf 1}(s < j \leq r)\\
= &\ {\bf 1}(s < k \leq r) - {\bf 1}(s < j \leq r) + {\bf 1}(s < j \leq
r) = {\bf 1}(s < k \leq r),
\end{align*}

\noindent as claimed.
\end{proof}

For completeness, we also compute the morphisms between highest weight
modules and quotients of Verma modules.

\begin{prop}\label{Phom}
Fix integers $0 \leq s < r \leq n$ and $0 \leq j < k \leq n+1$. Then,
\begin{alignat}{3}\label{Eprojext}
\dim \Hom_\calo(M_r / M_s, M_k / M_j) = &\ \mathbf{1}(s \leq j < r \leq
k), \qquad & \text{if } k \leq n,\\
\dim \Hom_\calo(M_r / M_s, P_j / P_k) = &\ \delta_{s,0} \mathbf{1}(r <
k), & \text{if } j \geq 1.\notag
\end{alignat}
\end{prop}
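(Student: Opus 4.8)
The plan is to reduce both computations to counting maximal vectors, via the standard presentation of a highest weight module as a cyclic quotient of $A$. By Proposition~\ref{Pproj} and Remark~\ref{Rverma}, for $s\geq 1$ the submodule $M_s\subset M_r$ is generated by the unique (up to scalar) maximal vector of weight $\lambda_s$, namely $d^{\lambda_r-\lambda_s}m_{\lambda_r}$; hence $M_r/M_s\cong A/(Au+A\ker\lambda_r+A\,d^{\lambda_r-\lambda_s})$, while $M_r\cong A/(Au+A\ker\lambda_r)$. Consequently, for every $N\in\calo[\lambda]$,
\[
\Hom_\calo(M_r/M_s,N)\;\cong\;\{\,w\in N_{\lambda_r}:uw=0,\ \text{and } d^{\lambda_r-\lambda_s}w=0\text{ when }s\geq 1\,\},
\]
so the task is to evaluate this space for $N=M_k/M_j$ and $N=P_j/P_k$. (When $s=0$ or $j=0$ the relevant submodule is $0$ and the corresponding $d$-killing condition simply drops out.)

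For $\Hom_\calo(M_r/M_s,M_k/M_j)$ with $k\leq n$, first recall that $M_k=M(\lambda_k)$ is free of rank one over $\F[d]$ on $m_{\lambda_k}$ with at most one-dimensional weight spaces, and that $M_j\subset M_k$ is the $\F[d]$-span of $\{d^p m_{\lambda_k}:p\geq\lambda_k-\lambda_j\}$ (both from Proposition~\ref{Pproj} and Remark~\ref{Rverma}). Thus $(M_k/M_j)_{\lambda_r}$ is nonzero --- and then one-dimensional, spanned by the class of $d^{\lambda_k-\lambda_r}m_{\lambda_k}$ --- exactly when $j<r\leq k$. In that range $u\,d^{\lambda_k-\lambda_r}m_{\lambda_k}=\lambda_k(\z_{\lambda_k-\lambda_r})\,d^{\lambda_k-\lambda_r-1}m_{\lambda_k}$ by Proposition~\ref{Psmith1}, and this scalar vanishes because $\lambda_r$ and $\lambda_k$ lie in one block (Proposition~\ref{Pverma}), so the spanning vector is automatically maximal. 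Finally $d^{\lambda_r-\lambda_s}$ carries it to the class of $d^{\lambda_k-\lambda_s}m_{\lambda_k}$, which is zero in $M_k/M_j$ iff $\lambda_k-\lambda_s\geq\lambda_k-\lambda_j$, i.e.\ iff $s\leq j$. Assembling the conditions $j<r\leq k$ together with ($s=0$, or $s\leq j$ when $s\geq 1$) gives $\dim\Hom_\calo(M_r/M_s,M_k/M_j)=\mathbf{1}(s\leq j<r\leq k)$.

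For $\Hom_\calo(M_r/M_s,P_j/P_k)$ with $j\geq 1$: when $s=0$ this is $\dim\Hom_\calo(M_r,P_j/P_k)=\mathbf{1}(r<k)$, which is precisely the $l=0$ instance of the first line of \eqref{Eext} already established in Theorem~\ref{Thom}. When $s\geq 1$ we must show the group vanishes. Since $M_r\twoheadrightarrow M_r/M_s$, the group embeds into $\Hom_\calo(M_r,P_j/P_k)$, so there is nothing to do when $r\geq k$. If $r<k$, Theorem~\ref{Thom} gives $\dim\Hom_\calo(M_r,P_j/P_k)=1$, and this line is spanned by the composite $M_r\hookrightarrow M_{k-1}\hookrightarrow P_j/P_k$, where the first map is the Verma inclusion of Proposition~\ref{Pproj} (legal as $r\leq k-1$) and the second is the bottom step of the Verma flag of $P_j/P_k$ furnished by Proposition~\ref{Pproj}. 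This composite is injective, hence does not annihilate the nonzero submodule $M_s$; so no nonzero map $M_r\to P_j/P_k$ factors through $M_r/M_s$, i.e.\ $\Hom_\calo(M_r/M_s,P_j/P_k)=0$. This yields $\dim\Hom_\calo(M_r/M_s,P_j/P_k)=\delta_{s,0}\mathbf{1}(r<k)$.

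I expect the only delicate points to be bookkeeping rather than substance: treating the degenerate indices $s=0$ and $j=0$ uniformly, correctly identifying which monomial $d^{\bullet}m_{\lambda_\bullet}$ generates which Verma submodule, and verifying the linkage input $\lambda_k(\z_{\lambda_k-\lambda_r})=0$ that makes the candidate weight vector maximal. None of these is a genuine obstacle, since all the structural facts needed are already contained in Propositions~\ref{Pverma}, \ref{Pproj} and Theorem~\ref{Thom}.
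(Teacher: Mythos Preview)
Your proof is correct and follows essentially the same route as the paper's: identify $\Hom_\calo(M_r/M_s,N)$ with those maps $M_r\to N$ that kill $M_s$, compute the (at most one-dimensional) space $\Hom_\calo(M_r,N)$, and then check when $M_s$ lies in the kernel. The only tactical difference is that for $N=M_k/M_j$ you count maximal weight-$\lambda_r$ vectors directly via Proposition~\ref{Psmith1} and the linkage condition, whereas the paper obtains $\dim\Hom_\calo(M_r,M_k/M_j)$ from the multiplicity formula $\dim\Hom_\calo(P_r,X)=[X:L_r]$; for $N=P_j/P_k$ the two arguments are identical.
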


\begin{proof}
We will use the following consequence of Equation \eqref{Emult} without
further reference:
\begin{equation}
\dim \Hom_\calo(P_i, M_k / M_j) = [M_k / M_j : L_i]
= \mathbf{1}(j < i \leq k).
\end{equation}

We now show the first assertion. If $s=0$, then $\Hom_\calo(M_r, M_k /
M_j) = \Hom_\calo(P_r, M_k / M_j)$ has dimension $\mathbf{1}(j < r \leq
k)$. If instead $s>0$, then $\Hom_\calo(M_r / M_s, M_k / M_j)$ consists
of precisely the maps $: M_r \to M_k / M_j$ such that the image of $M_s$
is killed. By the above analysis, this happens if and only if $j < r \leq
k$ and $s \leq j$, proving the first assertion.

Now note that the $s=0$ case of the second assertion was shown in Theorem
\ref{Thom}. If instead $s>0$, then every morphism $: M_r / M_s \to P_j /
P_k$ gives rise to a morphism $: M_r \to P_j / P_k$. By Theorem
\ref{Thom}, no such map kills $M_s \subset M_r$, so $\Hom_\calo(M_r /
M_s, P_j / P_k) = 0$ as claimed.
\end{proof}

\section{Tilting modules and submodules of projective
modules}\label{Stilting}

The goal of this section is to prove Theorem \ref{Tproj}, which
classifies all the tilting modules as well as submodules of quotients of
projectives $P_r / P_s$ in the block $\calo[\lambda]$. A crucial
ingredient in this analysis is the study of maps between quotients of
projective objects in the block $\calo[\lambda]$. This is the focus of
the next subsection.

\subsection{Graded maps between quotients of projective modules}

Recall that in order to prove Theorem \ref{Tkoszul}, we need to study the
algebra
\[
\ba = \End_\calo(\bp), \quad \text{where} \quad \bp = \bigoplus_{1 \leq r
\leq n} P_r.
\]

\noindent Our aim in this subsection is to first study the larger algebra
\[
\widetilde{\ba} = \End_\calo(\widetilde{\bp}), \quad \text{where} \quad
\widetilde{\bp} = \bigoplus_{1 \leq r < s \leq n+1} P_r / P_s.
\]

\noindent The first goal is to show that $\widetilde{\ba}$ is a
finite-dimensional, $\Z_+$-graded $\F$-algebra with a distinguished
basis, a subset of which spans the subalgebra $\ba$. We begin by
considering one such family of maps.

\begin{prop}\label{Pses}
Given integers $1 \leq r \leq s \leq n$, we have the following short
exact sequence in the block $\calo[\lambda]$:
\begin{equation}\label{Eses}
0 \to P_r / P_s \mapdef{f_{r,s}^{++}} P_{r+1} / P_{s+1} \to F(M_s / M_r)
\to 0,
\end{equation}
where $F$ is the restricted duality functor defined in Remark
\ref{Rverma}(6).
\end{prop}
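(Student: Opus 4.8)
The plan is as follows. Since for $r=s$ the three modules $P_r/P_s$, $P_{r+1}/P_{s+1}$ and $M_s/M_r$ all vanish, assume $r<s$. First some bookkeeping. By Proposition~\ref{Pproj}, $P_r/P_s$ and $P_{r+1}/P_{s+1}$ carry Verma flags with sections $M_r,\dots,M_{s-1}$ and $M_{r+1},\dots,M_s$ respectively; in particular both are finite-dimensional objects of $\calo[\lambda]$ with simple heads $L_r$ and $L_{r+1}$. Since $F$ is an exact, simple-preserving, contravariant duality on $\calo_\N$ (Remark~\ref{Rverma}) and $M_s/M_r$ is uniserial (Proposition~\ref{Pverma}) with head $L_s$, socle $L_{r+1}$ and composition factors $L_{r+1},\dots,L_s$, the module $F(M_s/M_r)$ is uniserial with head $L_{r+1}$, socle $L_s$, and the same factors. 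Using $[M_j:L_i]=\mathbf 1(i\le j)$, a direct count gives
\[
[P_{r+1}/P_{s+1}:L_i]\;=\;[P_r/P_s:L_i]+[F(M_s/M_r):L_i],\qquad 1\le i\le n .
\]
Thus the three modules have ``matching'' composition series; the work is to realize this as an actual extension.

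The first step is to produce the quotient map onto $F(M_s/M_r)$. Applying $\Hom_\calo(-,F(M_s/M_r))$ to the presentation $0\to P_{s+1}\to P_{r+1}\to P_{r+1}/P_{s+1}\to 0$ (the end of the resolution in Proposition~\ref{Presol}), and using $\Ext^{\ge 2}_\calo(P_{r+1}/P_{s+1},-)=0$ together with $\dim\Hom_\calo(P_i,F(M_s/M_r))=[F(M_s/M_r):L_i]=\mathbf 1(r<i\le s)$ (which is $1$ for $i=r+1$ and $0$ for $i=s+1$), the Euler characteristic yields $\dim\Hom_\calo(P_{r+1}/P_{s+1},F(M_s/M_r))=1$ and $\Ext^1_\calo(P_{r+1}/P_{s+1},F(M_s/M_r))=0$. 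Let $\phi$ be a nonzero such homomorphism. Its image is a quotient of $P_{r+1}/P_{s+1}$, hence has head $L_{r+1}$; but every proper submodule of the uniserial module $F(M_s/M_r)$ has head $L_j$ with $j\ge r+2$, so $\phi$ is surjective. Set $K:=\ker\phi$, so that $0\to K\to P_{r+1}/P_{s+1}\xrightarrow{\ \phi\ }F(M_s/M_r)\to 0$ is exact.

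It remains to identify $K$ with $P_r/P_s$, which is the crux of the proof. By the displayed identity $K$ and $P_r/P_s$ have the same composition factors, hence the same (finite) dimension, and $[K:L_s]=0$, so $\Hom_\calo(P_s,K)=0$. I claim $K$ is cyclic with head $L_r$. Granting this, $K$ is a quotient of its projective cover $P_r$; the quotient map $P_r\twoheadrightarrow K$ vanishes on $P_s\subseteq P_r$ (as $\Hom_\calo(P_s,K)=0$), hence descends to a surjection $P_r/P_s\twoheadrightarrow K$, which is an isomorphism since both modules have the same dimension — and one takes $f_{r,s}^{++}$ to be the composite $P_r/P_s\xrightarrow{\ \sim\ }K\hookrightarrow P_{r+1}/P_{s+1}$. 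To prove the claim, one computes $\Hom_\calo(K,L_i)$ from the long exact sequence obtained by applying $\Hom_\calo(-,L_i)$ to $0\to K\to P_{r+1}/P_{s+1}\to F(M_s/M_r)\to 0$: here $\Hom_\calo(P_{r+1}/P_{s+1},L_i)=\Hom_\calo(F(M_s/M_r),L_i)=\delta_{i,r+1}\F$ with the induced map $\Hom_\calo(F(M_s/M_r),L_i)\to\Hom_\calo(P_{r+1}/P_{s+1},L_i)$ an isomorphism (precomposition with the surjection $\phi$); $\Ext^1_\calo(P_{r+1}/P_{s+1},L_i)=\delta_{i,s+1}\F$, again read off from $0\to P_{s+1}\to P_{r+1}\to P_{r+1}/P_{s+1}\to 0$; and $\Ext^1_\calo(F(M_s/M_r),L_i)\cong\Ext^1_\calo(L_i,M_s/M_r)$ by the duality, which one evaluates using Theorem~\ref{Thom} after resolving $M_s/M_r$ through its Verma filtration and the sequences $0\to M_{j-1}\to M_j\to L_j\to 0$. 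The outcome is $\Hom_\calo(K,L_i)=\delta_{i,r}\F$, i.e.\ $K$ is cyclic with head $L_r$.

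I expect this last paragraph — pinning down $\ker\phi$ — to be the main obstacle: the equality of composition factors does not by itself force $K\cong P_r/P_s$, and one genuinely needs the fine vanishing in Theorem~\ref{Thom} (through the computation of $\Ext^1_\calo(L_i,M_s/M_r)$) to see that $\ker\phi$ is cyclic with head $L_r$. Everything else is Euler-characteristic bookkeeping built on the Verma-flag structure of Proposition~\ref{Pproj} and the duality $F$.
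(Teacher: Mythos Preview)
Your route is genuinely different from the paper's. The paper builds the \emph{injection} $f_{r,s}^{++}$ first, by reverse induction on $r$: it pushes forward the extension $0\to P_{r+1}/P_s\to P_r/P_s\to M_r\to 0$ along the already-constructed $f_{r+1,s}^{++}$ and identifies the result with the preimage of $M_r\subset M_{r+1}$ inside $P_{r+1}/P_{s+1}$. The only nontrivial vanishing it needs is $\Hom_\calo(M_r,L_j)=\Ext^1_\calo(M_r,L_j)=0$ for $j\ge r+2$, which is immediate from the two-term resolution $0\to P_{r+1}\to P_r\to M_r\to 0$. Your approach instead produces the \emph{surjection} $P_{r+1}/P_{s+1}\twoheadrightarrow F(M_s/M_r)$ first (which is clean) and then tries to pin down the kernel by computing its head.

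The gap is in that last step. From your long exact sequence, for $i\neq s+1$ one has $\Hom_\calo(K,L_i)\cong\Ext^1_\calo(L_i,M_s/M_r)$, and you need this to vanish for $r+1\le i\le s$. But Theorem~\ref{Thom} gives only \emph{dimensions} of $\Ext^l_\calo(L_i,L_j)$; deducing $\Ext^1_\calo(L_i,M_s/M_r)=0$ from the Loewy filtration of $M_s/M_r$ forces you to show that specific connecting maps are nonzero --- e.g.\ that the Yoneda product $\Ext^1_\calo(L_i,L_{i+1})\otimes\Ext^1_\calo(L_{i+1},L_i)\to\Ext^2_\calo(L_i,L_i)$ is nonzero --- and this is not a dimension count. (Concretely, already for $r=1,\,s=3,\,i=2$ the long exact sequence reads $0\to\Ext^1_\calo(L_2,M_3/M_1)\to\Ext^1_\calo(L_2,L_3)\to\Ext^2_\calo(L_2,L_2)$ with both right-hand terms one-dimensional; you must check the last arrow is injective.) Your phrase ``resolving $M_s/M_r$ through its Verma filtration'' is also off: $M_s/M_r$ has no Verma flag; what you presumably mean is either its Loewy filtration by simples or the two-step resolution $0\to M_r\to M_s\to M_s/M_r\to 0$, and in either case the analysis of connecting maps is the real work.

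The strategy is salvageable --- one can prove $\dim\Ext^1_\calo(L_i,M_s/M_r)=\delta_{i,r}+\delta_{i,s+1}$ by induction on $s-r$, checking the connecting maps explicitly via the uniseriality of $M_s/M_r$ --- but it is noticeably more delicate than the paper's approach, which was engineered precisely so that the needed vanishing involves $\Ext^\bullet_\calo(M_r,-)$ rather than $\Ext^\bullet_\calo(L_i,-)$.
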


\begin{proof}
We begin by proving the claim that there exists an injection
$f_{r,s}^{++} : P_r / P_s \hookrightarrow P_{r+1} / P_{s+1}$.
The proof is by reverse induction on $r \in [1,s]$. For $r=s, s-1$, the
assertion is immediate since $P_{s-1} / P_s \cong M_{s-1}$ for all $s$.
Now suppose the assertion holds for $r+1 \leq s$. We then have
\begin{equation}\label{Eproof21}
0 \to P_{r+1}/P_s \to P_r/P_s \to M_r \to 0,
\end{equation}

\noindent and $f_{r+1,s}^{++} : P_{r+1}/P_s \hookrightarrow P_{r+2} /
P_{s+1}$. If we push-forward \eqref{Eproof21} by $f_{r+1,s}^{++}$ we get
an exact sequence
\begin{equation}\label{Eproof22}
0 \to P_{r+2} / P_{s+1} \to N \to M_r \to 0.
\end{equation}

\noindent We now make the sub-claim that the extension $N$ in
\eqref{Eproof22} is the same as the submodule $N'$ of $P_{r+1} /
P_{s+1}$, given by:
\begin{equation}\label{Eproof23}
0 \to P_{r+2} / P_{s+1} \to N' \to M_r \to 0,
\end{equation}

\noindent where $N'$ is the pre-image of $M_r$ under $P_{r+1}
\twoheadrightarrow M_{r+1}$. In order to prove the sub-claim, it suffices
to prove the following facts:
\begin{enumerate}
\item The short exact sequences in \eqref{Eproof21} and \eqref{Eproof23}
are non-split;

\item $f_{r+1,s}^{++}$ induces an isomorphism $: \Ext^1_\calo(M_r,
P_{r+1}/P_s) \to \Ext^1_\calo(M_r, P_{r+2} / P_{s+1})$;

\item $\dim \Ext^1_\calo(M_r, P_{r+2} / P_{s+1}) = 1$. (This was already
shown in Theorem \ref{Thom}.)
\end{enumerate}

We first show (1). By Theorem \ref{Thom}, the map $: \Hom_\calo(M_r,
P_{r+1}/P_s) \to \Hom_\calo(M_r, P_r/P_s)$ induced by post-composing with
inclusion, is a nonzero map between one-dimensional vector spaces, hence
is an isomorphism. It follows that \eqref{Eproof21} is non-split.
Similarly, using Theorem \ref{Thom} and that $\Hom_\calo(M_r, P_{r+2} /
P_{s+1}) = \Hom_\calo(M_r, P_{r+1} / P_{s+1}) = \Hom_\calo(M_r, N')$,
shows that \eqref{Eproof23} is also non-split.

To show (2), it suffices to show that
\[
\Hom_\calo(M_r, \coker(f_{r+1,s}^{++})) =
\Ext^1_\calo(M_r, \coker(f_{r+1,s}^{++})) = 0,
\]

\noindent by using an appropriate long exact sequence obtained from the
inclusion $f_{r+1,s}^{++}$. Now note by Equation \eqref{Emult} that the
Jordan-Holder factors of $\coker(f_{r+1,s}^{++})$ are precisely one copy
of $L_j$ for $r+2 \leq j \leq s+1$. Thus to show (2) it suffices to prove
that $\Hom_\calo(M_r, L_j)=0=\Ext^1_\calo(M_r, L_j)$ for $j \geq r+2$.
But this follows immediately from the projective resolution of $M_r$.
This concludes the proof of the claim that $f_{r,s}^{++} : P_r / P_s \to
P_{r+1} / P_{s+1}$ is an injection.

To complete the proof of Equation \eqref{Eses}, let $V$ denote the
cokernel of the inclusion $f_{r,s}^{++} : P_r / P_s \hookrightarrow
P_{r+1} / P_{s+1}$. We first show the sub-claim that the vectors
\begin{equation}\label{Edual-verma}
\{ v_t := u^{\lambda_t - \lambda_{r+1}} 1_{P_{r+1} / P_{s+1}} : r+1 \leq
t \leq s \}
\end{equation}

\noindent are not contained in $P_r/P_s$, and hence have nonzero images
in $V$. The proof is by contradiction; thus, suppose for some integer $t
\in [r+1,s]$ that $v_t = u^{\lambda_t - \lambda_{r+1}} 1_{P_{r+1} /
P_{s+1}} \in P_r / P_s$. By the proof of Proposition \ref{Pproj}, $A v_t
\cong P_t / P_{s+1}$ then embeds into $P_r / P_s$. But then Theorem
\ref{Thom} would imply:
\[
1 = \dim \Hom_\calo(M_s, P_t / P_{s+1}) = \dim \Hom_\calo(M_s, A v_t)
\leq \dim \Hom_\calo(M_s, P_r / P_s) = 0,
\]

\noindent which is impossible, and hence shows the sub-claim.

Now consider the module $V = \coker(f_{r,s}^{++})$, which is a weight
module containing the weight vectors $v_t$, and with composition factors
$\{ L_t : r+1 \leq t \leq s \}$. This implies that $V$ is a
finite-dimensional, lowest weight module with specified formal character.
Now $V$ is easily verified to be the dual of the highest weight module
$M_s / M_r$, which completes the proof.
\end{proof}

Our next goal is to produce a distinguished $\Z_+$-graded basis of
$\widetilde{\ba}$. For this we first introduce the maps
\begin{equation}
f_{jk}^{++} : P_j / P_k \hookrightarrow P_{j+1} / P_{k+1}, \qquad
f_{jk}^{- \bullet} : P_j / P_k \hookrightarrow P_{j-1} / P_k, \qquad
f_{jk}^{\bullet -} : P_j / P_k \twoheadrightarrow P_j / P_{k-1}.
\end{equation}

\noindent Here, $f_{jk}^{++}$ was defined in Proposition \ref{Pses},
while the other two maps are canonically induced by the inclusion of
$P_j$ in $P_{j-1}$ for all $1 \leq j \leq n$, from Proposition
\ref{Pproj}. Now define for integers $1 \leq r < s \leq n+1$, $1 \leq j <
k \leq n+1$, and suitable $t > 0$:
\begin{equation}\label{Ephi}
\varphi_{(r,s), (j,k)}^{(t)} :=
\underbrace{f^{- \bullet}_{j+1,k} \circ \cdots \circ f^{-
\bullet}_{k-t,k}}_{k-j-t} \circ
\underbrace{f^{++}_{k-t-1,k-1} \circ \cdots \circ f^{++}_{r,r+t}}_{k-r-t}
\circ \underbrace{f^{\bullet -}_{r,r+t+1} \circ \cdots \circ f^{\bullet
-}_{r,s}}_{s-r-t}.
\end{equation}

\noindent Observe that Equation \eqref{Ephi} shows the maps
$\varphi_{(r,s), (j,k)}^{(t)}$ to be defined only for $1 \leq t \leq
\min(s-r, k-r, k-j)$. Our next result shows that the family of maps
\eqref{Ephi} provides the aforementioned graded basis of the algebra
$\widetilde{\ba}$.

\begin{prop}\label{Pprojmaps}
Setting as in Theorems \ref{Tkoszul} and \ref{Thom}.
\begin{enumerate}
\item Fix integers $1 \leq \{ r,s \} \leq j \leq k \leq n + 1$. Then the
image of the vector
\[
d^{\lambda_j - \lambda_s} u^{\lambda_j - \lambda_r} 1_{P_r/P_k} \in
P_r/P_k
\]

\noindent generates the submodule $P_s / P_{s+k-j}$ of $P_j / P_k
\hookrightarrow P_r / P_k$.

\item The maps $f_{jk}^{++}, f_{jk}^{- \bullet}, f_{jk}^{\bullet -}$
generate the $\F$-algebra $\widetilde{\ba} =
\End_\calo(\widetilde{\bp})$. Moreover, the maps
\[
\{ \varphi_{(r,s), (j,k)}^{(t)} : 1 \leq r < s \leq n+1, 1 \leq j < k
\leq n+1, 1 \leq t \leq \min(s-r, k-r, k-j) \}
\]

\noindent form a $\Z_+$-graded basis of $\widetilde{\ba}$.
Under this grading on $\widetilde{\ba}$,
\begin{equation}\label{Egrading2}
\deg f_{jk}^{++} = \deg f_{jk}^{- \bullet} = 1, \quad
\deg f_{jk}^{\bullet -} = 0, \quad
\deg \varphi_{(r,s), (j,k)}^{(t)} = 2(k-t) - r - j.
\end{equation}

\noindent Furthermore, if $1 \leq a < b \leq n+1$, then for all choices
of suitable $u,t$,
\begin{equation}\label{Egrading}
\varphi_{(j,k), (a,b)}^{(u)} \circ \varphi_{(r,s), (j,k)}^{(t)} =
{\bf 1}(u+t+j-k > 0) \varphi_{(r,s), (a,b)}^{(u+t+j-k)}.
\end{equation}

\item For all integers $1 \leq r < s \leq n+1$, the module $P_r / P_s$ is
indecomposable.
\end{enumerate}
\end{prop}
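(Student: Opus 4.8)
The plan is to prove parts (1) and (2) first, and then derive indecomposability of $P_r/P_s$ from the explicit description of the endomorphism algebra $\End_\calo(P_r/P_s)$ obtained in (2). Part (1) is a direct computation: using the PBW basis from Theorem \ref{Tpbw} together with Proposition \ref{Psmith1}, one checks that inside $P_r/P_k$ the vector $d^{\lambda_j-\lambda_s} u^{\lambda_j-\lambda_r} 1_{P_r/P_k}$ is a nonzero weight vector of weight $\lambda_s$, and that (by Proposition \ref{Pproj} and the character computation in its proof, cf.\ Remark \ref{Rgen}) it generates a submodule isomorphic to $P_s/P_{s+k-j}$; the numerology $s+k-j$ comes from matching the length of the Verma flag. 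For part (2), I would first verify that the composites $\varphi_{(r,s),(j,k)}^{(t)}$ are nonzero — each is a composite of the canonical inclusions/projections $f^{-\bullet}, f^{\bullet-}$ (which are clearly nonzero) with the $f^{++}$'s, and Proposition \ref{Pses} guarantees the latter are injections, so no composite collapses as long as the index $t$ lies in the stated range $1\le t\le\min(s-r,k-r,k-j)$. Counting these maps for fixed $(r,s),(j,k)$ gives exactly $\min(s-r,k-r,k-j)$ of them, which by Theorem \ref{Tproj}(1) — whose degree-$0$ part asserts $\dim\Hom_\calo(P_r/P_s,P_j/P_k)=\mathbf{1}(r<k)\min(s-r,k-r,k-j)$ — is the full dimension of the $\Hom$-space; hence they form a basis once one checks linear independence, which follows because they can be distinguished by where the generating vector $1_{P_r/P_s}$ is sent (by part (1), different values of $t$ send it to generators of submodules of different "depth"). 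The grading formula \eqref{Egrading2} and the composition rule \eqref{Egrading} are then bookkeeping: track the shift in the highest weight and the Verma-flag length under each elementary map, and compose.

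For part (3) proper: apply the $\Hom$-formula with $(j,k)=(r,s)$. We get $\dim\End_\calo(P_r/P_s)=\min(s-r,s-r,s-r)=s-r$, with basis $\{\varphi_{(r,s),(r,s)}^{(t)} : 1\le t\le s-r\}$, where $\varphi^{(s-r)}=\id$ (the unique degree-$0$ endomorphism) and the remaining $\varphi^{(t)}$ for $t<s-r$ have strictly positive degree by \eqref{Egrading2}, hence lie in the radical. Using the composition rule \eqref{Egrading} with $(a,b)=(r,s)$ and $j=r$, $k=s$, one reads off $\varphi^{(u)}\circ\varphi^{(t)}=\mathbf{1}(u+t+r-s>0)\,\varphi^{(u+t+r-s)}$, which exhibits $\End_\calo(P_r/P_s)$ as the truncated polynomial algebra $\F[x]/(x^{s-r})$ with $x=\varphi^{(s-r-1)}$. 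This is a local ring, so it has no nontrivial idempotents; therefore $P_r/P_s$ is indecomposable.

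The main obstacle is establishing that the maps $\varphi_{(r,s),(j,k)}^{(t)}$ are linearly independent and genuinely exhaust $\Hom_\calo(P_r/P_s,P_j/P_k)$ — i.e.\ that part (1) is precise enough to separate the different $t$. Everything else (nonvanishing of the $f^{++}$, the dimension count from Theorem \ref{Tproj}(1), the degree and composition formulas) is either already in hand or a routine weight-space computation; but the claim in (1) that the generated submodule is exactly $P_s/P_{s+k-j}$ — and not merely a subquotient of it — rests on comparing formal characters carefully, and that comparison is what ultimately pins down both the basis in (2) and the local structure in (3). I would therefore spend the bulk of the argument on part (1), after which (2) and (3) follow cleanly.
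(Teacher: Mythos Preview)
Your approach to part (3) is essentially the paper's: both identify $\End_\calo(P_r/P_s)$ as a $\Z_+$-graded algebra whose degree-zero piece is one-dimensional (spanned by the identity), and conclude that the only idempotent is the identity. Your further identification of the endomorphism algebra with $\F[x]/(x^{s-r})$ is a pleasant extra but not needed for indecomposability.

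There is, however, a genuine circularity in your argument for part (2). You invoke Theorem \ref{Tproj}(1) to supply the dimension formula $\dim\Hom_\calo(P_r/P_s, P_j/P_k) = \mathbf{1}(r<k)\min(s-r,k-r,k-j)$ and then conclude that the $\varphi^{(t)}$ span. But in the paper's logical order Theorem \ref{Tproj} is proved \emph{after} Proposition \ref{Pprojmaps} and explicitly cites it; the $l=0$ case of Theorem \ref{Tproj}(1) \emph{is} the dimension count that Proposition \ref{Pprojmaps}(2) is meant to establish. So you need an independent route to this dimension.

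The paper's route is to lift to $P_r$: by Equation \eqref{Emult} one has $\dim\Hom_\calo(P_r, P_j/P_k) = [P_j/P_k : L_r] = \min(k-r,k-j)$, and a basis is given by the maps $\varphi_t$ for $1 \le t \le \min(k-r,k-j)$, each with image landing in $P_{k-t}/P_k$ but not in $P_{k-t+1}/P_k$ (this is the linear-independence argument, carried out via the filtration on the target using the analysis after Equation \eqref{Edual-verma}, rather than merely via the image of the generator). The substantive step your outline skips is then to show that for $t > s-r$ no linear combination of the $\varphi_t$ kills $P_s$: the paper does this by repeatedly applying Proposition \ref{Pses} to see that $\varphi_t$ embeds $P_s/P_{r+t}$ into $P_{k-t+s-r}/P_k$ and not into the next layer down, so the images of $P_s$ under the various $\varphi_t$ are themselves linearly independent. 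Only after this does one know that exactly the maps with $t \le s-r$ descend to $P_r/P_s$, giving the dimension formula.
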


\noindent One can also show that $\dim \widetilde{\ba} = \displaystyle
\frac{(n+1)^5 - (n+1)^3}{24}$ (although this is not used in the paper).

\begin{proof}\hfill
\begin{enumerate}
\item Using Proposition \ref{Pproj}, Remark \ref{Rgen}, and the
previously developed theory of Category $\calo$, note that if $\varphi :
P_j / P_k \hookrightarrow P_r / P_k$, with the image of $\varphi$ denoted
by $V \subset P_r / P_k$, then $u^{\lambda_j - \lambda_r} 1_{P_r / P_k} =
\varphi(1_{P_j / P_k})$, where $1_{P_r / P_k}$ is the image of
$\overline{1} \in P(\lambda_r, \lambda_n - \lambda_r + 1) \in \calo$.
Thus we may set $r=j$ without loss of generality. We can also assume that
$s \leq n$.

Now let $M \subset P_j / P_k$ denote the submodule generated by
$d^{\lambda_j - \lambda_s} 1_{P_j / P_k}$. Clearly $M \twoheadrightarrow
M_s$ under the surjection $P_j / P_k \twoheadrightarrow P_j / P_{j+1}
\cong M_{j+1}$. Thus $P_s$ maps onto the cyclic $A$-module $M$ by
projectivity, yielding a morphism $\varphi : P_s \twoheadrightarrow M
\subset P_j / P_k$ whose image does not lie in $P_{j+1} / P_k$ (since the
image is $M_s \neq 0$). By the analysis in the proof of Theorem
\ref{Tproj}(1), $\varphi$ factors through an injective map $: P_s /
P_{s+k-j} \hookrightarrow P_j / P_k$, whose image equals $M$.

\item First suppose $\varphi : P_r / P_s \to P_j / P_k$ is a nonzero
morphism. Then $[P_j / P_k : L_r] > 0$ by Equation \eqref{Emult}, which
shows using Proposition \ref{Pproj} that $r < k$. Now suppose for the
remainder of this part that $r < k$.
Clearly, $\varphi_{(r,s),(j,k)}^{(t)}$ is an $A$-module morphism whose
image is contained in $P_{k-t} / P_k$. Moreover, the image is not
contained in $P_{k-t-1}/P_k$, by using the analysis after Equation
\eqref{Edual-verma}. Thus, the maps $\varphi_{(r,s),(j,k)}^{(t)}$ are
linearly independent, which shows that
\begin{equation}
\dim \Hom_\calo(P_r / P_s, P_j / P_k) \geq \mathbf{1}(r<k)
\min(s-r,k-r,k-j).
\end{equation}

We now show that the above maps also span the $\Hom$-space. Indeed,
suppose $\varphi \in \Hom_\calo(P_r / P_s, P_j / P_k)$; then
composing with the surjection $: P_r \twoheadrightarrow P_r / P_s$ yields
a map in $\Hom_\calo(P_r, P_j / P_k)$. By Equation \eqref{Emult}, this
latter space has dimension $[P_j / P_k : L_r] = \min(k-r,k-j)$. Thus,
assume for each $t \in (s-r, \min(k-r,k-j)]$ that $\varphi_t : P_r /
P_{r+t} \to P_j / P_k$ is a morphism with image in $P_{k-t} / P_k$.
Repeatedly applying Proposition \ref{Pses} shows that the nonzero
submodule $P_s / P_{r+t}$ embeds into the submodule $P_{k-t+s-r} / P_k
\neq 0$, but not in $P_{k-t+s-r+1}/P_k$, once again using the analysis
after Equation \eqref{Edual-verma} as well as Remark \ref{Rgen}. It
follows that no linear combination of the $\varphi_t$ is a map between
$P_r / P_s$ and $P_j / P_k$. Thus the maps $\varphi_{(r,s), (j,k)}^{(t)}$
(with $1 \leq t \leq \min(s-r, k-r, k-j)$) form an $\F$-basis of
$\Hom_\calo(P_r / P_s, P_j / P_k)$ for all $(r,s), (j,k)$. Consequently,
the maps $f_{jk}^{++}, f_{jk}^{- \bullet}, f_{jk}^{\bullet -}$ generate
$\widetilde{\ba}$, by Equation \eqref{Ephi}.

Now consider $\varphi_{(j,k), (a,b)}^{(u)} \circ \varphi_{(r,s),
(j,k)}^{(t)}$ for $1 \leq a < b \leq n+1$ and suitable $u>0$. The image
under $\varphi_{(r,s), (j,k)}^{(t)}$ of the generator $1_{P_r / P_s}$
lies in
\[
P_r / P_{r+t} \hookrightarrow P_{k-t} / P_k \hookrightarrow P_j / P_k,
\]

\noindent so we now ask where this generator goes under the surjection $:
P_j / P_k \twoheadrightarrow P_j / P_{j+u}$ (which is the first factor of
the composite map $\varphi_{(j,k), (a,b)}^{(u)}$). By the previous part,
the generator of $1_{P_r / P_{r+t}}$ in $P_j / P_k$ is precisely
\[
d^{\lambda_{k-t} - \lambda_r} u^{\lambda_{k-t} - \lambda_j} 1_{P_j /
P_k},
\]

\noindent so under the surjection $: P_j / P_k \twoheadrightarrow P_j /
P_{j+u}$, this generator goes to
\[
d^{\lambda_{k-t} - \lambda_r} u^{\lambda_{k-t} - \lambda_j} 1_{P_j /
P_{j+u}}.
\]

\noindent Once again applying the previous part, this vector generates
the submodule $P_r / P_{r+v} \hookrightarrow P_a / P_b$, with
\[
v := u+t+j-k.
\]

\noindent Thus, the composite map is nonzero if and only if $v > 0$, in
which case it sends $P_r / P_s \twoheadrightarrow P_r / P_{r+v}
\hookrightarrow P_{b-v} / P_b \hookrightarrow P_a / P_b$. We now verify
that $v$ is indeed at most $\min(s-r, b-r, b-a)$:
\begin{align*}
t \leq & \min(s-r, k-r, k-j), \quad u \leq \min(b-a, b-j, k-j)\\
\implies \quad v = &\ u + (t+j-k) \leq u \leq b-a;\\
v = &\ t + (u+j-k) \leq t \leq s-r;\\
v = &\ t+u + j-k \leq k-r + b-j + j-k = b-r.
\end{align*}

\noindent Thus we have shown that Equation \eqref{Egrading} holds.
The proof concludes by observing that Equations \eqref{Ephi},
\eqref{Egrading2}, and \eqref{Egrading} show that 
$\End_\calo(\widetilde{\bp})$ is indeed $\Z_+$-graded.

\item Note that $\End_\calo(P_r / P_s)$ is a $\Z_+$-graded subalgebra of
$\End_\calo(\widetilde{\bp})$, from the previous part. We now claim that
the only idempotent is $\varphi_{(r,s), (r,s)}^{(s-r)} = \id_{P_r /
P_s}$, which would show the result. Indeed, observe by the previous part
that $\varphi_{(r,s), (r,s)}^{(s-r)}$ is the only endomorphism having
(graded) degree zero. Now it is standard to verify that if $0 \neq
\sum_{t \geq 0} c_t \varphi_t$ is an idempotent with $\deg \varphi_t =
t$, then $c_0 = 1$ and $c_t = 0$ for $t>0$.\qedhere
\end{enumerate}
\end{proof}

\subsection{Tilting objects and their submodules}

The results in the previous subsection enable the analysis of the modules
$P_r / P_s$ and the classification of their submodules, as well as of all
tilting modules in the block $\calo[\lambda]$. The classifications
require repeated use of the following result.

\begin{lemma}\label{Lverma}
Fix $1 \leq r < k \leq n+1$, and a vector $0 \neq x \in P_r / P_k$.
Let $j \in [r,k)$ and $s \in [1,j]$ denote the unique integers such that
(a) $x \in (P_j / P_k) \setminus (P_{j+1} / P_k)$, and
(b) $x \mod (P_{j+1}/P_k) \in P_j / P_{j+1} \cong M_j$ lies in $M_s
\setminus M_{s-1}$.
Then the submodule generated by $x$ in $P_r / P_k$ contains the unique
copy of the submodule $P_s / P_{s+k-j}$ of $P_j / P_k \hookrightarrow P_r
/ P_k$.
\end{lemma}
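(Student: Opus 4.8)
My plan is to reduce at once to the case $r=j$ and then induct on $k-j$. Since $P_j/P_k$ occurs as a term of the Verma flag of $P_r/P_k$ (Proposition \ref{Pproj}) and $x\in P_j/P_k$ by hypothesis~(a), both the submodule $Ax$ and the distinguished copy of $P_s/P_{s+k-j}$ live inside $P_j/P_k$; moreover, by Proposition \ref{Pprojmaps}(1) this copy is $N:=A\cdot(d^{\lambda_j-\lambda_s}1_{P_j/P_k})$, generated by $y:=d^{\lambda_j-\lambda_s}1_{P_j/P_k}$, so it suffices to prove $N\subseteq Ax$ when $r=j$. Write $\pi\colon P_j/P_k\twoheadrightarrow P_j/P_{j+1}=M_j$ for the canonical surjection; since $M_j=M(\lambda_j)$ is uniserial with submodule chain $M_j\supset M_{j-1}\supset\cdots\supset M_1\supset 0$ (Proposition \ref{Pverma}), hypothesis~(b) forces the submodule of $M_j$ generated by $\pi(x)$ to be exactly $M_s$. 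If $k=j+1$, then $\pi=\mathrm{id}$ and $Ax=M_s$, which is the copy of $P_s/P_{s+k-j}=M_s$ inside $M_j$, so the base case holds.

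For the inductive step $k-j\ge 2$, I would pass to the canonical surjection $q\colon P_j/P_k\twoheadrightarrow P_j/P_{k-1}$, whose kernel is the bottom Verma $M_{k-1}:=P_{k-1}/P_k$. A short check --- using $q^{-1}(P_{j+1}/P_{k-1})=P_{j+1}/P_k$ (which needs $k-1\ge j+1$) together with the fact that $\pi$ factors through $q$ --- shows that $q(x)$ has invariants $(j,s)$ inside $P_j/P_{k-1}$. The inductive hypothesis then gives $Aq(x)\supseteq N'$, the copy of $P_s/P_{s+(k-1)-j}$ in $P_j/P_{k-1}$; and by Proposition \ref{Pprojmaps}(1) one has $N'=Aq(y)=q(N)$. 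Hence $q(N)\subseteq q(Ax)$, that is
\[
N\ \subseteq\ Ax+\ker q\ =\ Ax+M_{k-1}.
\]

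It remains to delete the summand $M_{k-1}$, which is the crux. Suppose the ``obstruction'' $V:=(N+Ax)/Ax\cong N/(N\cap Ax)$ is nonzero. On one hand, $V$ is a quotient of $N\cong P_s/P_{s+k-j}$, itself a quotient of the projective cover $P_s$ of $L_s$, so $V$ has simple top $L_s$. On the other hand, $N\subseteq Ax+M_{k-1}$ yields an embedding $V\hookrightarrow(M_{k-1}+Ax)/Ax\cong M_{k-1}/(M_{k-1}\cap Ax)$; pulling $V$ back along $M_{k-1}\twoheadrightarrow M_{k-1}/(M_{k-1}\cap Ax)$ identifies it with $\widetilde N/(M_{k-1}\cap Ax)$, where $\widetilde N:=M_{k-1}\cap(N+Ax)$. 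Since every submodule of the Verma module $M_{k-1}$ is again a Verma module (Proposition \ref{Pverma}), $\widetilde N=M_i$ for some $i$, so $V$ has simple top $L_i$; comparing tops forces $i=s$, i.e.\ $\widetilde N=M_s$. But $\widetilde N\supseteq M_{k-1}\cap N$, and $M_{k-1}\cap N=\ker(q|_N)$ is a Verma submodule $M_t$ of $M_{k-1}$ with $N/M_t\cong N'$; reading off the composition multiplicity of $L_{s+k-j-1}$ from the Verma flags of $N$ and of $N'$ (which are respectively $1$ and $0$) gives $[M_t:L_{s+k-j-1}]=1$, hence $t\ge s+k-j-1$. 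Then $M_{s+k-j-1}\subseteq M_t\subseteq\widetilde N=M_s$ forces $k-j\le 1$, contradicting $k-j\ge 2$. So $V=0$ and $N\subseteq Ax$.

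The step I expect to be the main obstacle is exactly this last one: once the inductive hypothesis has concentrated the discrepancy in the single bottom Verma $M_{k-1}$, one must identify that residual contribution sharply enough to rule it out. The three inputs that make it work are that $P_s/P_{s+k-j}$ has simple top $L_s$, that submodules of Verma modules are again Verma modules, and that the correct bookkeeping is by composition multiplicities rather than dimensions --- since the smallest simple $L_1=M_1$ may well be infinite-dimensional.
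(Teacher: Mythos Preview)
Your argument is correct and takes a genuinely different route from the paper's proof. The paper works directly with the weight-space structure: it first produces an element of $Ax$ of the form $1_{P_s/P_{s+k-j}} + v$ with an ``error term'' $v \in P_{j+1}/P_k$ of weight $\lambda_s$, and then applies successive powers of $u$ to annihilate $v$ while landing on generators of the successive Verma layers $M_{s+k-j-1}, P_{s+k-j-2}/P_{s+k-j}, \ldots$ of $N$; this is a hands-on computation using Remark~\ref{Rgen} and Proposition~\ref{Pprojmaps}(1). By contrast, your proof avoids weight-vector manipulations entirely: you induct on $k-j$, push the problem down to $P_j/P_{k-1}$, and then isolate the obstruction inside the bottom Verma $M_{k-1}$, killing it by the elegant ``two simple tops'' argument (quotient of $N$ versus quotient of a Verma submodule). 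Your approach is more categorical and transports well to settings where one knows the submodule lattice of Vermas and the simple tops of projectives but does not want to compute with explicit generators; the paper's approach is more constructive and actually identifies, layer by layer, which elements of $Ax$ generate each Verma piece of $N$. One minor point: you do not separately justify the word ``unique'' in the statement, whereas the paper does so via the graded basis of $\Hom_\calo(P_s/P_{s+k-j},P_j/P_k)$ from Proposition~\ref{Pprojmaps}; but since you exhibit the copy explicitly as $A\cdot(d^{\lambda_j-\lambda_s}1_{P_j/P_k})$, this is harmless.
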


\noindent Note that the lemma extends the analysis in the proof of
Theorem \ref{Thom} (see Equation \eqref{Edual-verma} and thereafter).

\begin{proof}
We first claim that the copy of $P_s / P_{s+k-j}$ inside $P_j / P_k$ is
unique. The claim follows via a careful analysis of the space
$\Hom_\calo(P_s / P_{s+k-j}, P_j / P_k)$ and its distinguished graded
basis, via Proposition \ref{Pprojmaps}.

Next, $(Ax)/ [Ax \cap (P_{j+1} / P_k)] \cong M_s$ by Proposition
\ref{Pverma}. It follows by Proposition \ref{Pprojmaps}(1) that
$v_{j,r,s} + v' \in Ax$, for some $v' \in P_{j+1}/P_k$, where
\[
v_{j,r,s} := d^{\lambda_j - \lambda_s} u^{\lambda_j - \lambda_r} 1_{P_r /
P_k}.
\]

\noindent Since all objects in the block $\calo[\lambda]$ are weight
modules, we may further assume that $v'$ is a weight vector of weight
$\lambda_s$ (as is $v_{j,r,s}$). By Proposition \ref{Pprojmaps}(1) again,
$v + 1_{P_s / P_{s+k-j}} \in Ax$ for some weight vector $v \in
P_{j+1}/P_k$ of weight $\lambda_s$. Now observe that if $v \in (P_l/P_k)
\setminus (P_{l+1}/P_k)$ for some $l < k$, then $v$ is a weight vector of
weight $\lambda_s$ under the quotient map $: P_l/P_k \twoheadrightarrow
M_l$. It follows that $u \cdot v \in P_{l+1}/P_k$. Applying $u$
repeatedly, one observes that $u^{\lambda_{k-j+s-1} - \lambda_s}$ kills
$v$, and sends $1_{P_s / P_{s+k-j}}$ via Proposition \ref{Pprojmaps}(1)
to the generating highest weight vector in the Verma submodule
$M_{k-j+s-1} \subset P_s / P_{s+k-j}$. It follows that $M_{k-j+s-1}
\subset Ax$.

By a similar argument, $u^{\lambda_{k-j+s-2} - \lambda_s}$ sends $v$ to a
weight vector in $M_{k-j+s-1}$ and $1_{P_s / P_{s+k-j}}$ to a generator
of $P_{s+k-j-2} / P_{s+k-j}$. By the previous paragraph, it follows that
$P_{s+k-j-2} / P_{s+k-j} \subset Ax$. Proceeding inductively along these
lines, we conclude that $P_s / P_{s+k-j} \subset Ax$.
\end{proof}

We now prove another of our main theorems in the present paper.

\begin{proof}[Proof of Theorem \ref{Tproj}]\hfill
\begin{enumerate}
\item First observe that if $s=k=n+1$, then using Proposition
\ref{Pprojmaps} and Equation \eqref{Emult}:
\begin{equation}
\dim \Hom_\calo(P_r,P_j) = [P_j : L_r] = \min(n+1-r,n+1-j) = n+1 +
\min(-r,-j) = n+1 - \max(r,j).
\end{equation}

Next, apply $\Hom_\calo(-, P_j / P_k)$ to the short exact sequence $0 \to
P_s \to P_r \to P_r / P_s \to 0$, and note that the Euler characteristic
of the corresponding long exact sequence is zero. Thus, we compute using
the above analysis:
\begin{align}\label{Eproj}
&\ \dim \Ext^1_\calo(P_r/P_s, P_j/P_k)\\
= &\ \dim \Hom_\calo(P_s, P_j/P_k) - \dim \Hom_\calo(P_r, P_j/P_k) + \dim
\Hom_\calo(P_r/P_s, P_j/P_k)\notag\\
= &\ [P_j / P_k : L_s] - [P_j / P_k : L_r] + 
\mathbf{1}(r<k) \min(s-r,k-r,k-j)\notag\\
= &\ \max(k,s) - \max(j,s) + \max(j,r) - \max(k,r) +
\mathbf{1}(r<k) \min(s-r,k-r,k-j).\notag
\end{align}

We now explicitly compute the last expression in \eqref{Eproj}, and
verify that it equals precisely $\mathbf{1}(r \leq j) \mathbf{1}(s \leq
k) (\min(0, j-s) + \min(s-r,k-j))$. There are three possible cases, and
in each of them the verification is straightforward:
\begin{enumerate}
\item $r \leq j, s \leq k$:
In this case, the last expression in \eqref{Eproj} equals
\[
k - \max(j,s) + j - k + \min(s-r,k-r,k-j) = \min(0,j-s) + \min(s-r,k-j).
\]

\item $j < r < k$:
In this case, the last expression in \eqref{Eproj} equals
\[
\max(k,s) - s + r - k + \min(s-r,k-r) = - \min(k,s) + r + \min(s,k) - r =
0.
\]

\item $r \not\in (j,k]; s > k$:
In this case, the last expression in \eqref{Eproj} equals
\[
s - s + \max(j,r) - \max(k,r) + \mathbf{1}(r \leq j) \min(s-r,k-r,k-j),
\]

\noindent and this is easily verified to equal $0$ in the two sub-cases:
$r \leq j$ and $r > k$.
\end{enumerate}

The above three cases prove the second of the Equations
\eqref{Eproj-dim}. Finally, to compute the higher $\Ext$-groups, a
similar computation to the ones above, using $\Hom_\calo(-,X)$ for any $X
\in \calo[\lambda]$, reveals that $\Ext^l_\calo(P_j/P_k,X) = 0$ for $X
\in \calo[\lambda]$ and $l \geq 2$.

\item The first observation is that every submodule $N \subset P_r / P_s$
has a filtration:
\[
0 \subset N \cap (P_{s-1}/P_s) \subset N \cap (P_{s-2} / P_s) \subset
\cdots \subset N \cap (P_r/P_s) = N,
\]

\noindent whose subquotients are submodules of Verma modules $(P_j/P_s) /
(P_{j+1}/P_s) \cong M_j$, hence are Verma modules themselves. It follows
that every submodule $N \subset P_r / P_s$ has a Verma flag.

Next, if $N \neq 0$, then $N \cap (P_{s-1}/P_s)$ is a nonzero submodule
by Lemma \ref{Lverma} and Proposition \ref{Pprojmaps}, so it necessarily
contains the submodule $L_1 \subset M_{s-1} = P_{s-1}/P_s \subset
P_r/P_s$. It follows that every submodule of $P_r/P_s$ is indecomposable.

Finally, we produce a bijection between the set of submodules $N \subset
P_r/P_s$ and the decreasing sequences specified in the statement of the
result. Given a decreasing sequence $s-1 \geq m_l > \cdots > m_1 \geq 1$,
construct the corresponding module $N \subset P_r/P_s$ as follows: first
set $N_s = 0$. Now given $N_j$ for some $s-l < j \leq s$, define
$N_{j-1}$ to be the lift to $N_j$ of the Verma submodule $M_{m_{j+l-s-1}}
\subset M_{j-1} \cong (P_{j-1}/P_s) / (P_j/P_s)$. In other words,
\[
0 \to N_j \to N_{j-1} \to M_{m_{j+l-s-1}} \to 0.
\]

\noindent In constructing $N_{j-1}$, we crucially use Lemma \ref{Lverma},
since the lift of $M_{m_{j+l-s-1}}$ to $P_r/P_s$ equals $P_{m_{j+l-s-1}}
/ P_{m_{j+l-s-1} + s-r}$, and since $\ker(P_{m_{j+l-s-1}} /
P_{m_{j+l-s-1} + s-r} \twoheadrightarrow M_{m_{j+l-s-1}})$ is necessarily
contained in $N_j$ by the hypothesis that the sequence of $m_j$ is
strictly decreasing. Proceeding inductively, we obtain the desired
submodule $N = N_{s-l} \subset P_{s-l}/P_s \subset P_r/P_s$.

Conversely, given $N \subset P_r/P_s$, let $l$ be the unique integer such
that $N \subset P_{s-l}/P_s$ but $N \subsetneq P_{s-l+1}/P_s$. Now
consider the filtration
\[
0 \subset N \cap (P_{s-1}/P_s) \subset N \cap (P_{s-2} / P_s) \subset
\cdots \subset N \cap (P_{s-l}/P_s) = N,
\]

\noindent whose subquotients are submodules of Verma modules $(P_j/P_s) /
(P_{j+1}/P_s) \cong M_j$, hence are Verma modules themselves. Denote the
subquotients as $M_{m_{s-1}}, \cdots, M_{m_{s-l}}$. Now choose $1 \leq j
\leq l$, and any weight vector $n'_j \in N \cap (M_{s-j}/M_s)$ whose
image modulo $N \cap (P_{s-j+1}/P_s)$ is the highest weight vector in
$M_{m_{s-j}}$. It follows by applying Lemma \ref{Lverma} to $x = n'_j$
that $s-1 \geq m_{s-1} > m_{s-2} > \cdots > m_{s-l} \geq 1$. Now it is
not hard to show that the two maps are inverse assignments, leading to
the aforementioned bijection.

In particular, the number of such submodules equals the number of such
decreasing subsequences, which can be of lengths $l = 0, 1, \dots, s-r$.
It follows that there are precisely $\displaystyle \sum_{l=0}^{s-r}
\binom{s-1}{l}$ such sequences.

\item \textbf{Step 1:}
We first show the following more general result:\medskip

\textit{Given integers $1 \leq j \leq r \leq s \leq n+1$, define
$M_{r,s} := P_1 / P_{1+s-r}$.
Then $M_{r,s}$ embeds into $P_j / P_s$, and its
cokernel has a finite filtration, with subquotients}
\begin{align*}
&\ F(M_{s-r}), F(M_{s-r+1}), \dots, F(M_{s-j-1}), F(M_{s-j});\\
&\ F(M_{s-j+1} / M_1), F(M_{s-j+2} / M_2), \dots, F(M_{s-1} / M_{j-1}).
\end{align*}

\noindent To show the above result, we begin by observing via Proposition
\ref{Pses} that
\[
M_{r,s} = P_1 / P_{1+s-r} \hookrightarrow P_2 / P_{2+s-r}
\hookrightarrow P_1 / P_{2+s-r},
\]

\noindent and the subquotients are $F(M_{s-r}/M_1)$ and $L_1 = F(M_1)$
respectively. Moreover, if we denote by $v_1$ the generator $1_{P_1 /
P_{2+s-r}}$, then for all $1 \leq t \leq 1+s-r$, the vector $u^{\lambda_t
- \lambda_1} v_1$ does not lie in $M_{r,s}$ by the analysis in
Proposition \ref{Pses}. It follows by lowest weight theory that the
cokernel of the inclusion $P_1 / P_{1+s-r} \hookrightarrow P_1 /
P_{2+s-r}$ is precisely $F(M_{s-r})$. Now change $s$ to $s+1, s+2, \dots$
in order to show that the inclusions
\[
M_{r,s} = P_1 / P_{1+s-r} \hookrightarrow P_1 / P_{2+s-r}
\hookrightarrow \cdots \hookrightarrow P_1 / P_{s-j+1}
\]

\noindent have respective cokernels equal to $F(M_{s-r}), F(M_{s-r+1}),
\dots, F(M_{s-j})$. This shows the first row of subquotients in the
statement above.
The second row of subquotients immediately follows by applying
Proposition \ref{Pses} to the successive inclusions
\[
P_1 / P_{s-j+1} \hookrightarrow P_2 / P_{s-j+2} \hookrightarrow \cdots
\hookrightarrow P_j / P_s.
\]

\noindent \textbf{Step 2:}
We now conclude the proof. Note from Proposition \ref{Pprojmaps} below
that $P_j / P_k$ is indecomposable for all $1 \leq j \leq k \leq n+1$.
Now apply the previous step with $j=1$ and $s=r=k+1$ for some $1 \leq k
\leq n$. It follows that $M_{k,k} = 0 \hookrightarrow P_1 / P_{k+1}$, and
the cokernel has a dual Verma flag. Thus $T_k := P_1 / P_{k+1}$ is an
indecomposable tilting module. Using Equation \eqref{Emult}, it is easily
verified that
\begin{equation}\label{Etilting}
[T_k : L_j] > 0 \Rightarrow j \leq k; \qquad
[T_k : L_k] = 1, \qquad \forall 1 \leq k \leq n.
\end{equation}

\noindent It follows by \cite[Theorem A4.2(i)]{Don} that every
indecomposable tilting module is isomorphic to $T_k$ for a unique $1 \leq
k \leq n$. Next, $F(T_k)$ is also an indecomposable tilting module, so
$F(T_k) \cong T_k$ by \eqref{Etilting}.
Finally, observe from Equation \eqref{Ephi} that there is a unique $t =
s-r$ such that $\varphi = \varphi_{(1,k), (1,n+1)}^{(t)} : T_{k-1}
\hookrightarrow T_n$ is injective. Moreover, every other map
$\varphi_{(1,k), (1,n+1)}^{(t)}$ has image contained inside
$\im(\varphi)$. 
Now dualize the short exact sequence $0 \to P_k \to T_n \to T_{k-1} \to
0$ to obtain
\[
0 \to T_{k-1} \to T_n \to F(P_k) \to 0,
\]

\noindent which concludes the proof by generalities in the highest weight
category $\calo[\lambda]$.
\end{enumerate}
\end{proof}

This classification of tilting modules sets the stage for employing the
comprehensive machinery developed by Ringel. We refer the reader to
\cite[Section A4]{Don} for some of the consequences for (tilting theory
in) the block $\calo[\lambda]$. Here we present one application of
Theorem \ref{Tproj}.

\begin{cor}\label{Ctilting}
The tilting modules form an increasing chain: $T_1 = L_1 \subset T_2
\subset \cdots \subset T_n = P_1$; dually, $T_n \twoheadrightarrow
T_{n-1} \twoheadrightarrow \cdots \twoheadrightarrow T_1$. Moreover, the
following are equivalent for a module $M$ in $\calo[\lambda]$:
\begin{enumerate}
\item $M$ is a submodule of a tilting module $T_k$ for some $1 \leq k
\leq n$.
\item $F(M)$ is a quotient of $T_k$.
\end{enumerate}
In this case, $F(T_k / M)$ is a submodule of $T_k$, hence indecomposable
with a Verma flag.
\end{cor}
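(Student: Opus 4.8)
The plan is to deduce the whole corollary from the self-duality of the $T_k$ (Theorem~\ref{Tproj}(3)) together with the description of submodules of $P_r/P_s$ (Theorem~\ref{Tproj}(2)), using only the formal properties of the restricted duality functor $F$ from Remark~\ref{Rverma}(6)--(7): it is exact and contravariant on $\calo_\N$, satisfies $F^2 \cong \id$, and fixes each $T_k$.

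First I would record the increasing chain. Since $T_k = P_1/P_{k+1}$ and $T_0 := 0$, the inclusions $T_0 \subset T_1 \subset \cdots \subset T_n = P_1/P_{n+1} = P_1$ are exactly the chain $0 \hookrightarrow P_1/P_2 \hookrightarrow \cdots \hookrightarrow P_1/P_{n+1}$ already constructed in Step~1 of the proof of Theorem~\ref{Tproj}(3) (take $j=1$, $r=s=n+1$ there; equivalently, compose the injection $f^{++}_{1,k+1}\colon P_1/P_{k+1} \hookrightarrow P_2/P_{k+2}$ of Proposition~\ref{Pses} with the canonical inclusion $P_2/P_{k+2} \hookrightarrow P_1/P_{k+2}$). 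That $T_1 = L_1$ follows since $T_1 = P_1/P_2 \cong M_1 = L_1$ by Proposition~\ref{Pproj}. Applying the exact contravariant functor $F$ to each inclusion $T_{k-1} \hookrightarrow T_k$ and invoking $F(T_j) \cong T_j$ for all $j$ then yields the dual chain of surjections $T_n \twoheadrightarrow T_{n-1} \twoheadrightarrow \cdots \twoheadrightarrow T_1$.

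Next I would prove the equivalence by a single application of $F$ in each direction. If $M \hookrightarrow T_k$, then $F$ turns this into a surjection $T_k \cong F(T_k) \twoheadrightarrow F(M)$, giving $(1)\Rightarrow(2)$; conversely, a surjection $T_k \twoheadrightarrow F(M)$ becomes, after applying $F$ and using $F^2 \cong \id$ and $F(T_k) \cong T_k$, an injection $M \cong F(F(M)) \hookrightarrow F(T_k) \cong T_k$, giving $(2)\Rightarrow(1)$. For the final sentence, I would apply $F$ to the short exact sequence $0 \to M \to T_k \to T_k/M \to 0$ to get $0 \to F(T_k/M) \to F(T_k) \to F(M) \to 0$; composing with an isomorphism $F(T_k) \cong T_k = P_1/P_{k+1}$ realizes $F(T_k/M)$, up to isomorphism, as a submodule of $P_1/P_{k+1}$, which is indecomposable and has a Verma flag by Theorem~\ref{Tproj}(2).

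Each step is a direct appeal to a result already in place, so I do not expect a substantive obstacle; the only point requiring care is the contravariance of $F$ --- consistently converting submodules into quotients and back --- and keeping track of the identifications $F(T_k)\cong T_k$ and $F^2 \cong \id$ so that the conclusions land on the correct side.
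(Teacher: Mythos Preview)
Your proof is correct and is precisely the argument the paper has in mind: the corollary is stated without proof immediately after Theorem~\ref{Tproj}, and everything you invoke---the chain of inclusions from Step~1 of the proof of Theorem~\ref{Tproj}(3), the self-duality $F(T_k)\cong T_k$, exactness and contravariance of $F$ from Remark~\ref{Rverma}, and the indecomposability and Verma-flag property of submodules from Theorem~\ref{Tproj}(2)---is exactly what is needed. There is nothing to add.
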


\section{Koszulity and the SKL condition}\label{S5}

In this section we show that the endomorphism algebra $\ba$ is Koszul and
satisfies the Strong Kazhdan-Lusztig condition. The first step is to use
the analysis in the preceding sections to prove our remaining main
result.

\begin{proof}[Proof of Theorem \ref{Tkoszul}]
All assertion in the first part, except for the grading, Koszulity, and
dimension of $\ba$ follow from the analysis in Section \ref{SbggO}. Now
note that $\ba \subset \End_\calo(\widetilde{\bp})^{op}$, as discussed in
Proposition \ref{Pprojmaps}. Thus $\ba$ inherits the grading in
Proposition \ref{Pprojmaps}. In particular, the $\Z_+$-graded vector
space $V_{ij} := \Hom_\calo(P_i, P_j)$ has an $\F$-basis of maps
\[
\varphi_{(i,n+1), (j,n+1)}^{(u)}, \quad 1 \leq u \leq \min(n+1-i,
n+1-j) = n+1 - \max(i,j).
\]

\noindent The grading here is given by $\deg \varphi_{(i,n+1),
(j,n+1)}^{(u)} = 2(n+1-u) - i - j$. Now define the Hilbert matrix of
$\ba$ to be $H(\ba,t)_{i,j} := \sum_{l \geq 0} t^l \dim V_{ij}[l]$ (with
respect to this grading). It follows by the above analysis and Corollary
\ref{Cext} that
\[
H(\ba, t)_{ij}
= \sum_{u = \max(i,j)}^n t^{2u - i - j} = (H(E(\ba),-t)^{-1})_{ij}.
\]

\noindent Moreover, the algebra $\ba$ has zero-degree graded component
equal to the $\F$-span of
$\varphi_{(j,n+1), (j,n+1)}^{(n+1-j)}$ $= \id_{P_j}$, for all $1 \leq j
\leq n$. Thus, $\ba[0]$ is a semisimple algebra that contains a copy of
the unit in $\ba$: $1_{\ba} = \sum_{i=1}^n \id_{P_j}$. Using the
numerical criterion for Koszulity from \cite[Theorem 2.11.1]{BGS}, it
follows that $\ba$ is Koszul. This shows the first part of the theorem.
(It also follows by \cite[Section 2.5]{BGS} that $\ba$ is the associated
graded algebra of its radical filtration.)

For the second part, first define the quiver algebra with relations
$Q_\lambda$ to be the quotient of the path algebra of the double of the
$A_n$-quiver, modulo the relations \eqref{Erelations}. Now note that the
$\Ext$-quiver of $\ba$ is as claimed, by Corollary \ref{Cext}.
Next, define $\gamma_i, \delta_i \in \ba$, and the idempotent zero-length
path $e_i$ at $[i]$, as follows:
\[
e_i \leftrightarrow \varphi_{(i,n+1), (i,n+1)}^{(n+1-i)}, \qquad
\gamma_i \leftrightarrow \varphi_{(i+1,n+1), (i,n+1)}^{(n-i)} =
f_{i+1,n+1}^{- \bullet}, \qquad
\delta_i \leftrightarrow \varphi_{(i,n+1), (i+1,n+1)}^{(n-i)} =
f_{i,n+1}^{++}.
\]

\noindent Using the explicit relations \eqref{Egrading} satisfied by the
maps $\varphi_{(r,s), (j,k)}^{(t)}$, it follows that $Q_\lambda
\twoheadrightarrow \ba^{op}$ as $\Z_+$-graded algebras. On the other
hand, it is easily verified that $\dim Q_\lambda = 1^2 + \cdots + n^2 =
\dim \ba^{op}$. This concludes the proof.
\end{proof}

\begin{remark}\label{Rtgwa}
The assumption that $z_1 \in H^\times$ was required in order to equip
$\cals(H,\theta,z_0,z_1)$ with a GWA structure. Thus, algebras defined by
\eqref{Egwareln} with $z_1 \in H \setminus H^\times$ are algebras with
triangular decomposition that are not GWAs, by Lemma \ref{Lgwa}.

We now remark that our main results, Theorems \ref{Tkoszul}--\ref{Tproj},
in fact hold for this more general family of algebras (given that $H$ is
commutative, whence $ud,du$ are transcendental over $H$). The proof in
this general setting involves certain explicit computations, which do not
require that $z_1 \in H^\times$; instead, it suffices to assume the
weaker condition that $\mu(z_1) \neq 0\ \forall \lambda_1 \leq \mu \leq
\lambda_n$. Specifically, these explicit computations occur only
in Section \ref{Sprelim}, and in proving Proposition \ref{Pprojmaps} and
Theorem \ref{Thom}; the remaining proofs go through unchanged.

Note that in several prominent examples in the literature listed above,
$\hfree = \widehat{H}$ (see \cite[Section 8]{Kh2} for more details). Thus
for our results to hold in all blocks of $\calo$ in such examples, we
would need to assume that $z_1$ does not belong to any maximal ideal
$\ker(\lambda)$ for $\lambda \in \widehat{H}$. In other words, $z_1 \in
H$ would need to be a unit, which explains the assumption in the present
paper.
\end{remark}

\begin{remark}\label{Rkoszul}
As discussed after the statement of Theorem \ref{Tkoszul}, the algebra
$\ba$ only depends on $n = |[\lambda]|$. Thus we define $\A_n := \ba$. 
For completeness, we briefly discuss other settings in the
literature where the family of algebras $\A_n$ is studied.
Note that $\A_n$ is the endomorphism algebra of the projective generator
of various (singular) blocks of Category $\calo$ over complex simple Lie
algebras of low rank; see \cite[Sections 6 and 7]{FNP} and \cite[Section
5]{St} for more details.
The algebra also features in the study of Category $\calo$ over the
Virasoro algebra, in finite blocks, or finite quotients or truncations of
thin blocks; this is discussed at length in \cite{BNW}. Furthermore, the
algebra $\A_n$ and its quadratic dual play a role in the study of
hyperplane arrangements, the hypertoric category $\calo$, perverse
sheaves on $\lie{gl}_n(\C)$, and Cherednik algebras. We refer the reader
to \cite{BEG,BG,BLPW} for more details.
\end{remark}

We next show that the algebra $\ba$ possesses an additional useful
homological property called the \textit{Strong Kazhdan-Lusztig
condition}. We begin with an Ext-computation that holds in greater
generality, in any highest weight category. Say that an object $X$ in the
block $\calo[\lambda]$ is in $\calf(\Delta)$ (respectively
$\calf(\nabla)$) if $X$ has a (dual) Verma flag.

\begin{theorem}[{\cite[Proposition A2.2]{Don}}]\label{Tdon}
For all $X \in \calf(\Delta), Y \in \calf(\nabla)$, we have:
\[
\dim \Ext^l_\calo(X,Y) = \delta_{l,0} \sum_{i=1}^n [X : M_i][Y : F(M_i)].
\]

\noindent Moreover, an object $X \in \calo[\lambda]$ is in
$\calf(\Delta)$, if and only if $\Ext^1_\calo(X,F(M_i)) = 0\ \forall 1
\leq i \leq n$.
\end{theorem}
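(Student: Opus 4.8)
The plan is to reduce both assertions to the $\Ext$-orthogonality of Verma and dual Verma modules, namely to the claim that $\Ext^l_\calo(M_i, F(M_j)) = \delta_{l,0}\,\delta_{i,j}\,\F$ for all $1 \le i,j \le n$, and then to propagate this along flags and, for the characterization, run the argument standard in any highest weight category.

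\textbf{Step 1 (orthogonality).} I would apply $\Hom_\calo(-, F(M_j))$ to the two-term projective resolution $0 \to P_{i+1} \to P_i \to M_i \to 0$ of Proposition~\ref{Presol}. Since $F$ is an exact contravariant duality (Remark~\ref{Rverma}) it preserves composition multiplicities, so Equation~\eqref{Emult} and Proposition~\ref{Pproj} give $\dim \Hom_\calo(P_k, F(M_j)) = [F(M_j):L_k] = [M_j:L_k] = \mathbf{1}(k \le j)$. Reading off the cohomology of the resulting two-term complex settles all cases with $i \ge j$ at once (yielding $\Hom_\calo(M_i,F(M_i)) \cong \F$ and vanishing otherwise, and no higher $\Ext$ since $M_i$ has projective dimension at most $1$), while for $i < j$ the complex is a map $\F \to \F$ whose kernel is $\Hom_\calo(M_i, F(M_j))$; hence the whole statement reduces to showing $\Hom_\calo(M_i, F(M_j)) = 0$ when $i < j$. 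For this I would use that $M_j$ is uniserial (Proposition~\ref{Pverma}) with composition layers $L_j, L_{j-1}, \dots, L_1$ from the top down (Proposition~\ref{Pproj}), so $F(M_j)$ is uniserial with layers $L_1, \dots, L_j$ from the top down: the image of any nonzero map $M_i \to F(M_j)$ would be simultaneously a quotient of $M_i$, hence with composition factors inside $\{L_1, \dots, L_i\}$, and a nonzero submodule of the uniserial $F(M_j)$, hence with composition factors $\{L_{j-t+1}, \dots, L_j\}$ for some $t \ge 1$, which is impossible for $i < j$.

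\textbf{Step 2 (the formula).} With orthogonality in hand the general formula follows by d\'evissage in two stages. First, fix $Y = F(M_j)$ and induct on the length of a $\Delta$-flag of $X$: a short exact sequence $0 \to X' \to X \to M_k \to 0$ with $X' \in \calf(\Delta)$ yields, upon applying $\Hom_\calo(-, F(M_j))$, a long exact sequence in which $\Ext^{\ge 1}_\calo(M_k, F(M_j)) = 0$ (orthogonality plus the projective dimension bound) and $\Ext^{\ge 1}_\calo(X', F(M_j)) = 0$ (induction), which forces $\Ext^{\ge 1}_\calo(X, F(M_j)) = 0$ together with additivity of $\dim \Hom_\calo(-, F(M_j))$, so $\dim \Hom_\calo(X, F(M_j)) = [X:M_j]$. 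Second, fix such an $X$ and induct on the length of a $\nabla$-flag of $Y$: applying $\Hom_\calo(X, -)$ to $0 \to F(M_k) \to Y \to Y'' \to 0$ with $Y'' \in \calf(\nabla)$, and using $\Ext^{\ge 1}_\calo(X, F(M_k)) = 0$ from the previous stage, gives $\Ext^{\ge 1}_\calo(X, Y) = 0$ and $\dim \Hom_\calo(X,Y) = [X:M_k] + \sum_i [X:M_i][Y'':F(M_i)] = \sum_i [X:M_i][Y:F(M_i)]$, as claimed.

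\textbf{Step 3 (the characterization).} The forward implication is the $l=1$ case of Step 2 with $Y = F(M_i)$. For the converse --- which I expect to be the main obstacle --- I would induct on the composition length of $X$ in the manner standard for highest weight categories. If $X \ne 0$, let $\lambda_k$ be the maximal weight occurring in $X$; then $X \in \calo(\lambda_k, 1)$, in which $M_k = P(\lambda_k, 1)$ is the projective cover of $L_k$, so $\dim \Hom_\calo(M_k, X) = [X:L_k] > 0$ (the composition factors of $X$ lie among $L_1,\dots,L_k$, all objects of $\calo(\lambda_k,1)$). The crux is to promote a suitable nonzero map $M_k \to X$ to an injection $M_k \hookrightarrow X$ whose cokernel $X''$ again satisfies $\Ext^1_\calo(X'', F(M_i)) = 0$ for all $i$; this uses the long exact sequence for $\Hom_\calo(-, F(M_i))$ applied to $0 \to M_k \to X \to X'' \to 0$, together with $\Hom_\calo(M_k, F(M_i)) = \delta_{ik}\F$ (Step 1) and the injectivity of the costandard $F(M_k)$ in the Serre subcategory of $\calo[\lambda]$ on the simples $L_1,\dots,L_k$, which makes the restriction $\Hom_\calo(X,F(M_k)) \to \Hom_\calo(M_k,F(M_k))$ surjective and thereby kills $\Ext^1_\calo(X'',F(M_i))$. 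Applying the inductive hypothesis to $X''$ then exhibits a $\Delta$-flag of $X$. This is exactly \cite[Proposition~A2.2]{Don}; the delicate point is the choice of the injective map $M_k \hookrightarrow X$ with $\Delta$-good cokernel, as everything else in the statement is a formal consequence of the orthogonality established in Step 1.
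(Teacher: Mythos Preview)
The paper does not prove this statement: it is cited as a black box from \cite[Proposition~A2.2]{Don} and used only once (to verify the SKL condition). Your sketch is therefore strictly more than the paper provides. Steps~1 and~2 are correct and complete; the uniserial argument for $\Hom_\calo(M_i,F(M_j))=0$ when $i<j$ is clean and particular to this setting, while the d\'evissage in Step~2 is the standard one. In Step~3 you correctly isolate the one genuinely nontrivial point --- passing from a nonzero map $M_k\to X$ to an \emph{embedding} with $\Delta$-good cokernel --- and, appropriately, defer to Donkin for it; everything you write around it (projectivity of $M_k$ in $\calo(\lambda_k,1)$, injectivity of $F(M_k)$ in the truncated block, and the long exact sequence argument for $\Ext^1_\calo(X'',F(M_i))=0$) is correct once that embedding is in hand.
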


We now discuss the Strong Kazhdan-Lusztig condition in the block
$\calo[\lambda]$, where $[\lambda]$ is finite.

\begin{defn}[{\cite[\S 2.1]{CPS2}}]
Given a finite length $A$-module $M$, define its \textit{radical} and
\textit{socle} filtrations by:
\begin{align*}
&\ \Rad^0 M = M, \qquad \Rad^j M = \Rad(\Rad^{j-1} M),\\
& \ \Soc^0 M = M, \qquad \Soc^j M / \Soc^{j-1} M = \Soc(M / \Soc^{j-1}
M), \qquad j > 0.
\end{align*}

\noindent The block $\calo[\lambda]$ satisfies the \textit{Strong
Kazhdan-Lusztig condition (SKL)} relative to a given function $\ell :
[\lambda] \to \Z$, if for all integers $0 \leq l,i$ and $1 \leq j,k \leq
n$,
\begin{align*}
&\ \Ext^l_\calo(M_j, \Soc^i(F(M_k))) \neq 0 \mbox{ or }
\Ext^l_\calo(\Rad^i(M_j), F(M_k)) \neq 0\\
\implies &\ l \equiv \ell(\lambda_j) - \ell(\lambda_k) + i \mod 2.
\end{align*}
\end{defn}

Kazhdan-Lusztig theories and conditions such as (SKL) are desirable
properties to have in a highest weight category (equivalently, for
quasi-hereditary algebras). A large program has been developed in the
literature by Cline, Parshall, and Scott whereby they discuss how such
conditions can be attained, as well as specific consequences of having
such a theory at hand. See \cite{CPS1,CPS2} and the references therein
for more information on the subject.

We conclude this section by proving the Strong Kazhdan-Lusztig condition
for the block $\calo[\lambda]$.

\begin{prop}
If Assumption \ref{Stand3} holds, then $\calo[\lambda]$ satisfies the
Strong Kazhdan-Lusztig condition with respect to the natural length
function $\ell : [\lambda] \to \Z$ given by $\ell(\lambda_j) := j =
l(M_j)$ for $1 \leq j \leq n$.
\end{prop}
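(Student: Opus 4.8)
The plan is to unwind the definition of (SKL) using that Verma modules are uniserial. By Proposition \ref{Pverma}, $M_j$ is uniserial with $\Rad^i M_j = M_{j-i}$, so its radical layers are $L_j, L_{j-1}, \dots, L_1$ read from the head; dually, $F(M_k)$ is uniserial with socle layers $L_k, L_{k-1}, \dots, L_1$ read from the socle, and $\Soc^i F(M_k) = F(M_k/M_{k-i})$. Consequently every $\Ext$-group appearing in (SKL) is, after applying the duality functor $F$ where needed (which turns $\Ext^l_\calo(L_m, F(M_k))$ into $\Ext^l_\calo(M_k, L_m)$), one of: $\Ext^l_\calo(M_j, L_m)$, $\Ext^l_\calo(M_j, F(M_k))$, or $\Ext^l_\calo(M_j, F(M_k/M_{k-i}))$. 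The first step is to compute these three families completely.

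For $\Ext^l_\calo(M_j, L_m)$: apply $\Hom_\calo(-, L_m)$ to the two-term projective resolution $0 \to P_{j+1} \to P_j \to M_j \to 0$ of Proposition \ref{Presol} and use $\dim \Hom_\calo(P_a, L_b) = \delta_{a,b}$; since the source $\Hom_\calo(P_j, L_m)$ and target $\Hom_\calo(P_{j+1}, L_m)$ of the connecting map are never simultaneously nonzero, one reads off that $\Ext^l_\calo(M_j, L_m)$ is nonzero precisely for $(l,m) \in \{(0,j), (1, j+1)\}$. For $\Ext^l_\calo(M_j, F(M_k))$: here $M_j \in \calf(\Delta)$ and $F(M_k) \in \calf(\nabla)$, so Theorem \ref{Tdon} gives $\dim \Ext^l_\calo(M_j, F(M_k)) = \delta_{l,0}\, \delta_{j,k}$. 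For $\Ext^l_\calo(M_j, F(M_k/M_{k-i}))$: apply $F$ to $0 \to M_{k-i} \to M_k \to M_k/M_{k-i} \to 0$ and feed $\Hom_\calo(M_j, -)$ into the resulting long exact sequence; the flanking terms are the groups just computed, so this $\Ext$-group vanishes for $l \ge 2$ and in degrees $0,1$ is governed by $\mathbf{1}(j=k)$ and $\mathbf{1}(j=k-i)$ — alternatively one runs the four-term resolution $0 \to P_{k-i+1} \to P_{k-i}\oplus P_{k+1} \to P_k \to M_k/M_{k-i} \to 0$ of Proposition \ref{Presol} against $\Hom_\calo(-, F(M_j))$, using $\dim \Hom_\calo(P_a, F(M_j)) = [M_j : L_a] = \mathbf{1}(a \le j)$ from \eqref{Emult}.

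It then remains to substitute these into the congruence defining (SKL) and verify, in each of the handful of nonzero cases, that $l \equiv \ell(\lambda_j) - \ell(\lambda_k) + i = j - k + i \pmod 2$: on the costandard side, writing $L_m$ for the simple occupying socle layer $i$ of $F(M_k)$ (so that $m+i$ depends only on $k$) and using the formula for $\Ext^l_\calo(M_j, L_m)$; on the standard side, writing $L_{m'}$ for the simple in radical layer $i$ of $M_j$ and using $\Ext^l_\calo(M_k, L_{m'})$; and for the terms built from the socle/radical submodules rather than layers, using the $\delta_{l,0}\delta_{j,k}$-type computations above. The parity is the numerical shadow of the Koszulity established in Theorem \ref{Tkoszul}: the resolution $0 \to P_{j+1} \to P_j \to M_j \to 0$ is \emph{linear} — its second syzygy is generated in internal degree $\deg f^{-\bullet}_{j+1,n+1} = 1$ by \eqref{Egrading2} — so the homological degree and the radical-layer degree move in lockstep, and $\ell(\lambda_j) = j$ is precisely the grading position, since $L_m$ occupies radical layer $j-m$ of $M_j$. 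One may therefore also finish abstractly, by invoking the general principle that a Koszul highest weight category all of whose standard modules have linear projective resolutions satisfies (SKL) with respect to its grading length function, which here is $\ell(\lambda_j) = j$. The one point requiring care is the index bookkeeping relating $l$ to $i$ so that the single congruence absorbs every case; the explicit $\Ext$-computations above make this mechanical, each relevant group being at most one-dimensional and situated in a predictable homological degree.
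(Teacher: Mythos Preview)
Your route is considerably more elaborate than the paper's, and it contains a step that does not go through as written.

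The paper's argument is two lines. Since $\Rad^i M_j = M_{j-i}$ is again a Verma module and (in the paper's intended convention) $\Soc^i F(M_k) = F(M_{k-i})$ is again a dual Verma module, \emph{both} Ext-groups in the (SKL) condition are of the form $\Ext^l_\calo(M_a, F(M_b))$, and Theorem~\ref{Tdon} computes these in one stroke as $\delta_{l,0}\,\delta_{a,b}$. The parity check is then trivial: nonvanishing forces $l=0$ together with $j-i=k$ (resp.\ $j=k-i$), so $j-k+i$ is even. None of the auxiliary computations you set up---$\Ext^l_\calo(M_j,L_m)$ via the two-term resolution, socle layers, or the abstract Koszulity/linearity principle---are needed.

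More seriously, your identification $\Soc^i F(M_k) = F(M_k/M_{k-i})$, while the standard one, does not match how the paper actually uses the symbol in its proof (the stated definition with $\Soc^0 M = M$ is evidently a typo; the proof treats $\Soc^i F(M_k)$ as $F(M_{k-i})$, i.e.\ the quotient $F(M_k)/\Soc_i F(M_k)$ in ordinary notation, which is the formulation in \cite{CPS2}). If you carry out your long exact sequence computation with $F(M_k/M_{k-i})$ in the second slot, you will find for instance $\Hom_\calo(M_k, F(M_k/M_{k-i})) \cong \F$ for every $i\ge 1$ (the map $M_k \twoheadrightarrow L_k \hookrightarrow \Soc\, F(M_k/M_{k-i})$), and then the required congruence $0 \equiv j-k+i = i \pmod 2$ fails for odd $i$. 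So the ``substitute and verify'' step you defer would in fact fail. The fix is exactly the observation the paper makes: the object entering the (SKL) condition is a genuine dual Verma $F(M_{k-i})$, whence Theorem~\ref{Tdon} applies directly and no long exact sequence is needed.
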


\noindent In particular, $\ba^!$ is also Koszul (by results in
\cite{CPS2}), and $(\ba^!)^! \cong \ba$. Using Theorem \ref{Tbgs}, this
provides a second (albeit less direct) proof of the Koszulity of the
endomorphism algebra $\ba$.

\begin{proof}
By Proposition \ref{Pverma}, $\Rad^j M_i = M_{i-j}$ for all $j$. Now
compute using Theorem \ref{Tdon}:
\[
\dim \Ext_\calo^l(\Rad^i(M_j),F(M_k)) = \dim \Ext_\calo^l(M_{j-i},
F(M_k)) = \delta_{l,0} \delta_{j-i,k}.
\]

\noindent Thus, if $\Ext_\calo^l(\Rad^i(M_j), F(M_k))$ is nonzero, then
$l=0$ and $j-i=k$, whence $\ell(\lambda_j) - \ell(\lambda_k) + i - l =
j-k+i-0 = 2i \equiv 0 \mod 2$, as desired. Now note via the duality
functor $F$ that the socle series of $F(M_k)$ is dual to the radical
series of $M_k$, and hence is also uniserial. Thus, the condition
involving the socle filtration is verified as above, since
$\Soc^i(F(M_k)) = F(\Rad^i(M_k)) = F(M_{k-i})$.
\end{proof}

\section{The category of sub-triangular Young tableaux}\label{Sstyt}

We now introduce the notion of sub-triangular Young tableaux. This is a
hitherto unexplored phenomenon for triangular GWAs, which affords a
combinatorial interpretation of morphisms and extensions between
distinguished objects of the block $\calo[\lambda]$.

\subsection{The transfer maps}

We begin with the ``transfer map'' obtained from Theorem \ref{Tproj}(2),
which sends a submodule $N \subset P_r/P_s$ to an integer tuple $(s-1
\geq m_1 > \cdots > m_l \geq 1$), for some $0 \leq l \leq s-r$. Since
$P_r/P_s \hookrightarrow P_1/P_s \hookrightarrow P_1 = T_n$ via
Proposition \ref{Pprojmaps}(1), we can now define a map $\Psi$ from a
submodule of $P_1 = T_n$ to tuple of integers, via:
\begin{equation}
N \subset T_n  \quad \leadsto \quad \Psi(N) := (m_1, \dots, m_l).
\end{equation}

\noindent Moreover, the integers $m_j$ are obtained as follows: consider
the filtration
\[
0 \subset N \cap (P_{s-1}/P_s) \subset N \cap (P_{s-2} / P_s) \subset
\cdots \subset N \cap (P_r/P_s) = N.
\]

\noindent Each subquotient is a submodule of the Verma module $M_j \cong
(P_j/P_s) / (P_{j+1}/P_s)$, hence is a Verma module. Denote by $l$ the
number of nonzero subquotients, and by $M_{m_j}$ the subquotient of
$M_{s-j}$ for $1 \leq j \leq l$.
For instance, $\Psi(P_r/P_s) = (s-1, s-2, \dots, r)$ for all $1 \leq r <
s \leq n+1$, which includes all tilting, projective, and Verma modules in
the block $\calo[\lambda]$.

We now explain how to encode the transfer map $\Psi$ by Young tableaux.
First,
observe by the above discussion that the submodules $N \subset
T_n$ are in bijection with
Young tableaux with consecutively decreasing integer entries in
each column (to $1$)
and each row, and where the topmost cells of each row form the
sequence $\Psi(N)$. For
instance, the following figure corresponds to the submodule
$N_0 := \Psi^{-1}((5,3,2))$.
\[ \young(5,432,321,21,1) \]

This module is contained in $P_3/P_6$, and hence in any
$P_r/P_s$ into which $P_3/P_6$ embeds.
Moreover, as explained in the construction of the map $\Psi$,
the columns of the diagram
correspond to a Verma flag of $N_0$, and for each $j=1,2,3$,
the first $j$ leftmost columns
contain the Jordan-Holder factors in the corresponding
submodule of $N_0$.

Given any submodule $N \subset P_r/P_s$, define $\Y(N)$ to be the Young
tableau with strictly decreasing entries, which is obtained from
$\Psi(N)$ in the above manner.
We now observe that the map $\Y(\cdot)$ behaves well under taking the
quotient of one submodule of $P_r/P_s$ by another. Namely, it is not hard
to show that if $N' \subset N \subset P_r/P_s$, $N/N'$ has a filtration
whose subquotients are highest weight modules of the form $M_{m_j} /
M_{m'_j}$, where $\Psi(N) = (m_1, \dots, m_l)$ and $\Psi(N') = (m'_1,
\dots, m'_l)$ (with possibly some zeros added to the end to obtain
exactly $l$ entries). Thus it is natural to define $\Y(\cdot)$ at any
subquotient of $P_r/P_s$ (and hence of $T_n$), as the skew-tableau
\begin{equation}\label{Esubquot}
\Y(N/N') := \Y(N) \setminus \Y(N'),
\end{equation}

\noindent where $\Y(N')$ embeds into the first few leftmost columns of
$\Y(N)$, with each cell of $N'$ mapping to a cell in $N$ with the same
number. Note that such subquotients cover all objects in the block
$\calo[\lambda]$ that are generated by a single weight vector.

\subsection{Dual objects and dual Young tableaux}\hfill

We now show that the diagrams of dual objects are closely related --
in fact, they are transposes of one another. To examine this in
closer
detail, first recall that tilting objects are self-dual. This
is also reflected
in their corresponding Young tableaux, which we now give a
name.

\begin{defn}
Given an integer $k \geq 1$, define $\Y_k$ to be the
labelled triangular diagram:
\[
{\Large\young(\kmz\kmo\kmt\cdots21,\kmo\kmt\cdots\cdots1,\kmt\vdots\ddots\ddots,\vdots\vdots\ddots,21,1)}
\]
\end{defn}

Observe that $\Y_k = \Y(T_k)$ for all $1 \leq k \leq n$. As is standard,
we will denote the conjugate, or transpose, of a Young tableau $X$ by
$X^T$. Then $\Y_k = \Y(T_k)$ is self-conjugate for each $k$,
corresponding to the self-duality of $T_k$. We now show the relation
between the Young tableau corresponding to a subquotient of $T_k$ and its
dual, by refining Corollary \ref{Ctilting}.

\begin{prop}\label{Pstyt}
Suppose $N \subset T_k = P_1 / P_{k+1}$ is a submodule for some $1 \leq k
\leq n$, and $\psi := \Psi(N)$. Then we have the following short exact
sequence:
\begin{equation}\label{Etiltsub}
0 \to N = \Psi^{-1}(\psi) \to T_k = \Psi^{-1}((k, \dots, 1)) \to T_k/N =
F(\Psi^{-1}(\{ k, \dots, 1 \} \setminus \psi)) \to 0,
\end{equation}

\noindent i.e., $\Psi(F(T_k/N))$ equals the set $\{ 1, \dots, k \}
\setminus \Psi(N)$ arranged in decreasing order. In particular, if $N'
\subset N \subset P_r/P_s \subset T_n$ are $A$-submodules, then $N/N',
F(N/N')$ are subquotients of $T_n$, and
\begin{equation}
\Y(F(N/N')) = Y(N/N')^T = \Y(N)^T \setminus \Y(N')^T.
\end{equation}

\noindent Moreover, the number of cells labelled $\framebox{j}$ in
$\Y(N/N')$ is precisely the multiplicity $[N/N' : L_j]$ for all $1 \leq j
\leq n$. In particular, the total number of cells in $\Y(N/N')$ is
precisely the length of $N/N'$.
\end{prop}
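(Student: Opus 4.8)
The plan is to prove the short exact sequence \eqref{Etiltsub} first, deduce the complementarity statement $\Psi(F(T_k/N)) = \{1,\dots,k\} \setminus \Psi(N)$ as an immediate corollary, and then bootstrap to the general statement about subquotients $N/N'$ of $T_n$. First I would recall from Corollary \ref{Ctilting} that for a submodule $N \subset T_k$, the module $F(T_k/N)$ is a submodule of $T_k$ which is indecomposable with a Verma flag; so $\Psi(F(T_k/N))$ is defined, and it suffices to identify the tuple. The key observation is that the Jordan--H\"older multiplicities are additive on the short exact sequence $0 \to N \to T_k \to T_k/N \to 0$, so $[N : L_j] + [T_k/N : L_j] = [T_k : L_j]$ for each $j$; since $T_k = \Psi^{-1}((k,\dots,1))$ is multiplicity-free by \eqref{Etilting}, we get $[T_k : L_j] = \mathbf 1(j \le k)$. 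Thus $[N : L_j]$ and $[T_k/N : L_j] = [F(T_k/N):L_j]$ are complementary indicator functions on $\{1,\dots,k\}$, and since by Theorem \ref{Tproj}(2) a submodule of $T_k$ is multiplicity-free with composition factors exactly $\{L_{m_1},\dots,L_{m_l}\}$ where $\Psi(N)=(m_1,\dots,m_l)$, the tuple $\Psi(F(T_k/N))$ must consist precisely of the complementary indices, arranged in decreasing order. This proves \eqref{Etiltsub}.

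Next I would establish the ``number of cells labelled $\framebox{j}$'' statement. By construction of $\Y(N)$ from $\Psi(N)=(m_1,\dots,m_l)$, the $p$-th row (read top-to-bottom) contains the consecutively decreasing entries $m_p, m_p-1, \dots, 1$, so a cell labelled $j$ appears in row $p$ exactly when $m_p \ge j$. Hence the total number of cells labelled $j$ in $\Y(N)$ is $\#\{p : m_p \ge j\}$. On the other hand, from Theorem \ref{Tproj}(2) and the construction of the bijection there, the successive Verma subquotients of $N$ in the natural filtration are exactly $M_{m_1}, \dots, M_{m_l}$, and by Proposition \ref{Pproj} each $M_{m_p}$ has composition factors $L_1, \dots, L_{m_p}$; so $[N:L_j] = \#\{p : m_p \ge j\}$ as well. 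This matches, and the skew case $\Y(N/N') = \Y(N)\setminus\Y(N')$ follows by subtracting: $[N/N':L_j] = [N:L_j]-[N':L_j]$ equals the number of cells labelled $j$ in the skew tableau. Summing over $j$ gives the total cell count equal to the length of $N/N'$.

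Finally, for the transpose relation $\Y(F(N/N')) = \Y(N/N')^T = \Y(N)^T \setminus \Y(N')^T$ in the general setting $N' \subset N \subset P_r/P_s \subset T_n$, I would reduce to the tilting case just handled. Embed $P_r/P_s \hookrightarrow T_n$ via Proposition \ref{Pprojmaps}(1); then $N', N$ are submodules of $T_n$, and applying \eqref{Etiltsub} with $k = n$ gives $\Psi(F(T_n/N')) = \{1,\dots,n\}\setminus\Psi(N')$ and similarly for $N$. Since $F$ is exact on $\calo_\N$ (Remark \ref{Rverma}(7)) and contravariant, dualizing $0 \to N' \to N \to N/N' \to 0$ shows $F(N/N')$ is the kernel of $F(N) \twoheadrightarrow F(N')$, which identifies it as the subquotient $F(T_n/N')\big/\ker\!\big(F(T_n/N') \to F(T_n/N)\big)$ — explicitly, $F(N/N')$ corresponds to the set-difference $(\{1,\dots,n\}\setminus\Psi(N')) \setminus (\{1,\dots,n\}\setminus\Psi(N)) = \Psi(N)\setminus\Psi(N')$ in the complementary ordering. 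Unwinding how complementation in $\{1,\dots,n\}$ together with the row/column-to-conjugate dictionary for $\Y_n$ acts on diagrams, this is exactly the statement that $\Y(F(N/N'))$ is the transpose $\Y(N/N')^T$. The main obstacle I anticipate is precisely this last bookkeeping step: carefully matching the combinatorial operation ``complement the index set inside $\{1,\dots,n\}$, then take the induced skew shape'' with the geometric operation ``transpose the skew Young diagram,'' and checking that the cell labels transform correctly (a cell labelled $j$ in $\Y(N)$ at position $(p,q)$ must go to a cell labelled $j$ in $\Y(N)^T$ at position $(q,p)$, consistently with the decreasing-diagonal labelling of $\Y_n$). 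Everything else is additivity of multiplicities and the already-established bijection of Theorem \ref{Tproj}(2).
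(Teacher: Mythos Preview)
Your argument for \eqref{Etiltsub} rests on the claim that $T_k$ is multiplicity-free in terms of simple composition factors, but this is false. From Proposition \ref{Pproj}, $T_k = P_1/P_{k+1}$ has a Verma flag with successive subquotients $M_1, M_2, \dots, M_k$, so
\[
[T_k : L_j] \;=\; \sum_{i=1}^{k} [M_i : L_j] \;=\; \#\{\, i \le k : j \le i \,\} \;=\; k - j + 1
\]
for $1 \le j \le k$. Equation \eqref{Etilting} only asserts $[T_k : L_k] = 1$, not that every simple occurs with multiplicity one. Consequently the complementarity-of-indicators argument collapses: knowing $[N:L_j] + [T_k/N : L_j] = k-j+1$ does not pin down either summand, and certainly does not force the composition factors of $N$ and $F(T_k/N)$ to be indexed by complementary subsets of $\{1,\dots,k\}$. (You also interchange rows and columns in the cell-counting paragraph: the columns of $\Y(N)$, not the rows, are the strings $m_p, m_p-1, \dots, 1$ corresponding to the Verma subquotients $M_{m_p}$ in the flag of $N$; see the example $\Y(\Psi^{-1}((5,3,2)))$ in the paper. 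With that correction the multiplicity count there is fine.)

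The paper's proof avoids this trap by an induction on $|\Psi(N)|$. The key structural input is that if $\Psi(N) = (m_1,\dots,m_l)$ then $N \subset X_{m_1}$, where $X_{m_1}$ denotes the image of $P_1/P_{m_1+1}$ under $\varphi_{(1,m_1+1),(1,k+1)}^{(k-m_1)}$ inside $T_k$. Since $T_k/X_{m_1}$ has a known dual Verma flag with subquotients $F(M_{m_1+1}),\dots,F(M_k)$ (Step~1 in the proof of Theorem \ref{Tproj}(3)), and since $M_{m_1} \subset N \subset X_{m_1} \cong T_{m_1}$ lets one pass to $N/M_{m_1} \subset T_{m_1 - 1}$ with $\Psi(N/M_{m_1})$ one entry shorter, the induction goes through. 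In other words, one really needs to track the position of $N$ inside the dual Verma filtration of $T_k$, not just the simple multiplicities. Your outline for the subquotient and transpose statements is reasonable once \eqref{Etiltsub} is in hand, but the first step needs to be replaced by this (or an equivalent) structural argument.
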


\noindent The underlying combinatorial phenomenon is as follows, and
verifiable by direct visual inspection: for any submodule $N \subset
T_k$, the diagram $\Y_k \setminus \Y(N)$ is the transpose of $\Y(N_1)$
for a submodule $N_1 \subset T_k$. The result says that in fact $N_1 =
F(T_k / N)$.

\begin{proof}
We begin by refining the proof of Theorem \ref{Tproj}(2). In what
follows, we use Proposition \ref{Pprojmaps}(1) and Lemma \ref{Lverma}
without further reference. Suppose $\Psi(N) = (m_1, \dots, m_l)$. We
claim that $N \hookrightarrow P_1 / P_{m_1+1} \hookrightarrow P_1 /
P_{k+1}$. Indeed, let $X_j$ denote the image of $P_1 / P_{j+1}
\hookrightarrow P_{k-j+1}/P_{k+1} \hookrightarrow P_1 / P_{k+1} = T_k$
under the map $\varphi_{(1,j+1), (1,k+1)}^{(k-j)}$,
and let $x_j$ denote the image of $1_{P_1 / P_{j+1}}$ in the isomorphic
copy $X_j \subset T_k$. Now observe that $u^{\lambda_{m_1} - \lambda_1}
x_{m_1}$ generates the Verma module $M_{m_1} \subset N \cap X_{m_1}$.
Similarly, $d^{\lambda_{m_1 - 1} - \lambda_{m_2}} u^{\lambda_{m_1 - 1} -
\lambda_1} x_{m_1}$ generates the Verma submodule $M_{m_2} \subset
M_{m_1-1}$ of the module $(N / M_{m_1}) \cap (X_{m_1} / M_{m_1})$.
Proceeding inductively, the claim that $N \subset X_{m_1}$ follows.

Now we begin the proof. Note by Corollary \ref{Ctilting} that $F(T_k/N)$
is a submodule of $F(T_k) = T_k$. We first show by induction on
$|\Psi(N)|$ that $\Psi(F(T_k/N)) = \{ 1, \dots, k \} \setminus \Psi(N)$.
The proof is by induction on $|\Psi(N)|$. Thus, given $N \subset T_k$,
note from above that $N \subset X_{m_1} \subset T_k$, so by Step (1) of
the proof of Theorem \ref{Tproj}(3), $T_k / N$ has a finite filtration
with subquotients $F(M_k), \dots, F(M_{m_1+1})$, and $X_{m_1} / N$.
Because $M_{m_1} \hookrightarrow N \hookrightarrow X_{m_1} \cong P_1 /
P_{m_1+1}$, it follows that $X_{m_1} / N \cong (P_1 / P_{m_1}) / (N /
M_{m_1})$. Since $\Psi(N/M_{m_1})$ has strictly smaller length than
$\Psi(N)$ (including in the case when $|\Psi(N)| = 1$), and since we are
left to deal with $N / M_{m_1} \hookrightarrow P_1 / P_{m_1} = T_{m_1 -
1}$, we are done by induction.

In particular, the above analysis applied to $F(T_k/N)$ shows Equation
\eqref{Etiltsub}, and also that $\Y(F(N)) = \Y(N)^T$ for all sub-objects
$N \subset T_n$. Next, given sub-objects $N' \subset N \subset T_n$, let
$C', C$ denote the cokernels of the maps $N' \hookrightarrow T_n$ and $N
\hookrightarrow T_n$ respectively. Hence by the above analysis and
Corollary \ref{Ctilting}, $\Y(F(C)) = \Y(C)^T$, and similarly for
$\Y(F(C'))$. Now $N/N' \hookrightarrow C' \twoheadrightarrow C$ is exact,
so $F(C) \hookrightarrow F(C') \twoheadrightarrow F(N/N')$ by duality,
and $F(C') \subset F(T_n)$. Therefore,
\begin{align*}
\Y(F(N/N')) = &\ \Y(F(C')) \setminus \Y(F(C)) = \Y(C')^T \setminus
\Y(C)^T\\
= &\ (\Y_n \setminus \Y(N'))^T \setminus (\Y_n \setminus \Y(N))^T\\
= &\ \Y(N)^T \setminus \Y(N')^T = \Y(N / N')^T.
\end{align*}

Finally, it suffices to prove the assertion about multiplicities for
submodules $N \subset P_1$. But this follows from the detailed analysis
of the Verma flag of $N$ as described in Theorem \ref{Tproj}(2) (and
earlier in this section). \qedhere
\end{proof}

Given this compatibility between dual objects and their associated (dual)
Young tableaux, it is natural to ask if these connections can be made
precise. In the rest of this section, our goal is to provide a positive
answer by introducing a category with such diagrams as objects, and
suitable candidates for morphisms. As we will see, we achieve more, by
providing combinatorial analogues of the distinguished morphisms
$\varphi_{(r,s),(j,k)}^{(t)}$ (see Equation \eqref{Ephi} and Proposition
\ref{Pprojmaps}).

\subsection{Objects in the category of sub-triangular Young tableaux}

We now introduce and study a combinatorial category $\ycat$ of
sub-triangular Young tableaux, that will contain the aforementioned
diagrams corresponding to subquotients of $T_n$. In this subsection we
analyze the objects of $\ycat$, and show that they include the diagrams
$\Y(N/N')$ discussed above.

\begin{defn}\hfill
\begin{enumerate}
\item Define a \textit{sub-triangular Young tableau (STYT)} to be a
diagram $X$ that satisfies the following properties:
\begin{enumerate}
\item $\framebox{\it{k}} \subset X \subset \Y_k$ for some $k \geq 1$.

\item $X$ is connected.

\item For every row $R$ and column $C$ of $\Y_k$, the sub-diagrams $X
\cap R$ and $X \cap C$ are connected.

\item If $c$ is a cell in $\Y_k \setminus X$, then $X$ cannot contain the
cells immediately above $c$ and to the immediate left of $c$, if both
cells exist in $\Y_k$.
\end{enumerate}

\item Given a cell $c \in X$, denote the number in it by $n(c)$.
\end{enumerate}
\end{defn}

\noindent Here is an example of a STYT:
\[ \young(6,5432,::21,::1) \]
\noindent By Proposition \ref{Pstyt}, this diagram equals
$\Y(\Psi^{-1}((6,4,3,2)) / \Psi^{-1}((4,3)))$.

Henceforth, fix a triangular GWA satisfying Assumption \ref{Stand3}, and
a block $\calo[\lambda]$ with $[\lambda] = \{ \lambda_1 < \cdots <
\lambda_n$ for some $n \geq 1$.
Given integers $0 \leq j \leq k \leq n+1$, Proposition \ref{Pstyt} implies
that $\Y(M_k/M_j)$ and $\Y(F(M_k/M_j))$ (with $k \neq n$), and $\Y(P_j /
P_k)$ and $\Y(F(P_j / P_k))$ (with $j \geq 1$) are, respectively, the
following diagrams, which can all be verified to be STYTs:

\vspace*{0.4cm} \hspace*{0.1cm} {\Large\young(\kmz,\kmo,\svd,\jpo)}

\vspace*{-2.4cm} \hspace*{2.1cm}
{\Large\young(\kmz\kmo\scd\jpo)}

\vspace*{-0.6cm} \hspace*{5.9cm}
{\Large\young(\kmo\kmt\scd\jpz,\kmt\scd\scd\jmo,\svd\svd\sdd\svd,4321,321,21,1)}

\vspace*{-4.1cm} \hspace*{9.8cm}
{\Large\young(\kmo\kmt\scd4321,\kmt\scd\scd321,\svd\svd\sdd21,\jpz\jmo\scd1)}

\vspace*{1.8cm}

\begin{remark}
By the analysis in Propositions \ref{Pses} and \ref{Pstyt}, as well as
the proof of Theorem \ref{Tproj}(3), a group of rows at the bottom or a
group of columns on the left denotes STYT diagrams of sub-objects in
$\calo[\lambda]$, while a group of rows at the top or a group of columns
on the right denotes STYT diagrams of quotients in the block. In order to
maintain descending numbers as one moves right or down, the objects in
$\calf(\Delta)$ (respectively, in $\calf(\nabla)$) are written so that
their (dual) Verma subquotients in a (co)standard filtration occur as
columns (respectively, rows) of the corresponding STYTs.
\end{remark}

\begin{remark}
Observe that if we relabelled the families of objects $\{ L_j, M_j, P_j,
T_j \}$ under the permutation $w_\circ = (j \leftrightarrow n+1-j)$ of
$\{ 1, \dots, n \}$, then the STYT diagrams would consist of standard
Young tableaux, with strictly increasing (and successive) integers in
each row and column. In a parallel representation-theoretic setting
involving quantum groups, Young tableaux have connections to crystals;
see e.g.~\cite{HL} and the references therein. In our setting, the cells
in a sub-triangular Young tableau correspond not to an $\F$-basis for a
representation, but to the set of Jordan-Holder factors of the
corresponding representation, using Proposition \ref{Pprojmaps}(1) via
the transfer map $\Psi$.
\end{remark}

We now show that the notion of STYTs is the same as that of $\Y(\cdot)$.

\begin{prop}
For all STYTs $X \subset \Y_k$, there exist submodules $N' \subset N
\subset T_k$ such that $X = \Y(N/N')$. However, the converse is not
necessarily true.
\end{prop}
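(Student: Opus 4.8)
\emph{Overview.} The proposition has a realization half (every STYT arises as some $\Y(N/N')$) and a counterexample half (not every $\Y(N/N')$ is an STYT). I would treat them separately.

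\emph{Realization.} Fix an STYT $X\subset\Y_k$; since $X$ must contain the corner cell $\framebox{k}$ and no larger label occurs in $\Y_k$, the integer $k$ is determined by $X$, and $k\le n$. By axioms (b) and (c) in the definition of STYT, the intersection of $X$ with column $j$ of $\Y_k$ is empty or a contiguous vertical run, say in rows $[a_j,b_j]$; and since $X$ is connected and contains $\framebox{k}=(1,1)$, the columns it meets form an initial segment $\{1,\dots,l\}$ with $a_1=1$. Encode $X$ by the integers $m_j:=k+2-j-a_j$ and $m'_j:=k+1-j-b_j$ for $1\le j\le l$. I would first prove the combinatorial claim that $k=m_1>m_2>\cdots>m_l\ge 1$, that the indices $j$ with $m'_j\ge 1$ form a prefix $\{1,\dots,l'\}$ on which $m'_1>m'_2>\cdots>m'_{l'}\ge 1$, and that $m'_j\le m_j-1$ for every $j$. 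Granting this, set $\mathbf m:=(m_1,\dots,m_l)$, $\mathbf m':=(m'_1,\dots,m'_{l'})$ and $N:=\Psi^{-1}(\mathbf m)\subset T_k$ (legitimate, as $\mathbf m$ is strictly decreasing in $[1,k]$ with $m_1=k$). Running the submodule construction from the proof of Theorem \ref{Tproj}(2), aided by Lemma \ref{Lverma}, now \emph{inside} $N$ (possible at each stage precisely because $m'_j\le m_j$) produces a submodule $N'\subset N$ with $\Psi(N')=\mathbf m'$. Then by Equation \eqref{Esubquot} together with the bottom-justified description of $\Y(-)$ on submodules of $T_k$ (column $j$ of $\Y(\Psi^{-1}(m_1,\dots,m_l))$ is the bottom $m_j$ cells of column $j$ of $\Y_k$), the diagram $\Y(N/N')=\Y(N)\setminus\Y(N')$ has in column $j$ exactly the cells of $\Y_k$ in rows $k+2-j-m_j$ through $k+1-j-m'_j$, i.e.\ rows $[a_j,b_j]$; hence $\Y(N/N')=X$.

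\emph{The combinatorial claim (the main obstacle).} I would prove it by a descending induction on the columns from the STYT axioms. Axiom (a) forces $a_1=1$, hence $m_1=k$. For the strict descent of the $m_j$ — equivalently $a_{j+1}\ge a_j$ — suppose $a_{j+1}<a_j$; then column $j+1$ overhangs column $j$ in the rows $[a_{j+1},a_j-1]$, and in those rows the horizontal-connectivity axiom (c), combined with the already-established $a_1\le\cdots\le a_j$ and the overlaps $a_{j'}\le b_{j'-1}$ forced by connectivity (b), forces successively $a_{j-1}=a_j$, $a_{j-2}=a_j$, \dots, $a_1=a_j$; since $a_1=1$ this contradicts $a_{j+1}<a_j=1$. (As $a_{j+1}\ge a_j$ gives $m_{j+1}\le m_j-1$, the descent is automatically strict.) For the statements on the $m'_j$ I would use axiom (d): if $m'_j=0$ while $m'_{j+1}\ge 1$, or if $1\le m'_{j+1}<m'_j$, then $(b_{j+1}+1,\,j+1)$ is a cell of $\Y_k\setminus X$ whose cell immediately above, $(b_{j+1},\,j+1)$, and cell immediately to the left, $(b_{j+1}+1,\,j)$, both lie in $X$ — the latter because (b) forces $a_j\le b_{j+1}$ — violating (d). This yields the prefix and strict-descent properties of the $m'_j$, and $m'_j\le m_j-1$ is immediate since column $j$ of $X$ is nonempty of height $m_j-m'_j$.

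\emph{The converse fails.} It suffices to produce, in any block with $n\ge 3$, submodules $N'\subset N\subset T_3$ for which $\Y(N/N')$ is not an STYT. Take $N:=\Psi^{-1}(3,1)\subset T_3$; the leftmost column of its diagram $\Y(N)=\{(1,1),(2,1),(3,1),(2,2)\}$ corresponds to the Verma submodule $M_3\subset N$, which contains $M_2$ by Proposition \ref{Pproj}, so put $N':=M_2$, with $\Y(N')=\{(2,1),(3,1)\}$. Then by \eqref{Esubquot}, $\Y(N/N')=\Y(N)\setminus\Y(N')=\{(1,1),(2,2)\}$ consists of the two cells labelled $\framebox{3}$ and $\framebox{1}$, which touch only at a corner; equivalently $N/N'\cong L_3\oplus L_1$, the sequence $0\to M_3/M_2=L_3\to N/N'\to N/M_3=M_1=L_1\to 0$ splitting because $\Ext^1_\calo(L_1,L_3)=0$ by Theorem \ref{Thom}. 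Since an STYT is connected (axiom (b)), $\Y(N/N')$ is not an STYT, so the converse to the realization statement fails.
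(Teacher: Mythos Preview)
Your proposal is correct and follows essentially the same strategy as the paper's proof: read off the top labels in each column to get $\Psi(N)$, read off the bottom labels minus one to get $\Psi(N')$, and invoke the bijection of Theorem \ref{Tproj}(2) together with Equation \eqref{Esubquot}; the counterexample $N=\Psi^{-1}((3,1))$, $N'=\Psi^{-1}((2))$ is exactly the paper's. The paper compresses the combinatorial verification to ``it follows easily from the definition of an STYT,'' whereas you actually carry it out, and your arguments from axioms (b), (c), (d) are sound.

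One slip to fix: in the paragraph on the $m'_j$, the case you list as bad, ``$1\le m'_{j+1}<m'_j$,'' is precisely the \emph{good} case. What you need to rule out is $m'_{j+1}\ge m'_j\ge 1$ (strict descent fails) in addition to the prefix violation $m'_j=0$, $m'_{j+1}\ge 1$. Your actual argument via axiom (d) at the cell $(b_{j+1}+1,j+1)$ handles exactly that correct bad case --- the left neighbour $(b_{j+1}+1,j)$ lies in $X$ precisely when $b_{j+1}+1\le b_j$, i.e.\ when $m'_j\le m'_{j+1}$ --- so only the stated inequality needs reversing.
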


\noindent Note also that to each pair of modules $N' \subset N \subset
T_k$, there corresponds an integer $1 \leq l \leq k$, and two sequences
of decreasing integers $k \geq b_l > b_{l-1} > \cdots > b_1 \geq 1$ and
$b_l > a_l \geq a_{l-1} \geq \cdots \geq a_1 \geq 0$, such that $a_j >
a_{j-1}$ whenever $a_{j-1} > 0$, and $b_j > a_j$ for all $j$. The $b_j$
form $\Psi(N)$ and the nonzero $a_j$ form $\Psi(N')$, respectively.

\begin{proof}
Given an STYT $X \subset \Y_k$, it follows easily from the definition of
an STYT that the top entries in each column are strictly decreasing,
starting at $k$. Define $N$ to be the corresponding submodule of $T_k$.
Let the last entry in the $j$th column be denoted by $a_j$; then if
$a_{j-1} > 0$, it follows by the definition of an STYT that $a_j >
a_{j-1}$. Thus, if $a_1 > a_2 > \cdots > a_r$ denote the entries
among the $a_j$ that are not $1$, then Equation \eqref{Esubquot} implies
that $X = \Y(N/N')$, where $N' = \Psi^{-1}((a_1-1, \dots, a_r-1))$. This
proves the first assertion; the converse is, however, not true, as is
verified from following easy example: $N = \Psi^{-1}((3,1)), N' =
\Psi^{-1}((2))$.
\end{proof}

Having shown that the assignment $\Y(\cdot)$ is compatible with taking
subquotients and duals, we now discuss additional properties of
$\Y(\cdot)$ related to generators. We require the following notation.

\begin{defn}\hfill
\begin{enumerate}
\item Given a subset $X'$ of cells in a STYT $X$, the \textit{STYT
generated by $X'$ in $X$}, denoted by $\Y(X',X)$, is the sub-diagram
consisting of all cells obtained by traveling from a cell in $X'$ via a
finite sequence of moves, either one cell to the left, or one cell down.

\item Given a STYT $X$, define its set of \textit{primitive generators}
to be any minimal subset of cells $G_{\min}(X) \subset X$ such that $X =
\Y(G_{\min}(X),X)$.
\end{enumerate}
\end{defn}

\begin{lemma}
The primitive generating set of any STYT $X$ is unique. If $X =  \Y(N)$
for $N$ or $F(N)$ of the form $M_k / M_j$ or $P_j / P_k$, then
$G_{\min}(X)$ is a single cell.
\end{lemma}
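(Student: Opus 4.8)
The plan is to give a purely combinatorial description of $G_{\min}(X)$ and then read it off the four displayed diagrams. Call a cell $c \in X$ \emph{maximal} if $X$ contains neither the cell of $\Y_k$ directly above $c$ nor the cell directly to the right of $c$ (a vacuously satisfied condition when that cell does not exist in $\Y_k$). The key observation is that, in the definition of $\Y(X',X)$, the only way a sequence of moves (each going one cell left or one cell down, and staying in $X$) can enter a cell $c$ from a different cell is from the cell directly above $c$ (by a down move) or from the cell directly to the right of $c$ (by a left move). Consequently a maximal cell $c$ never lies in $\Y(X \setminus \{c\}, X)$, so any subset $G \subseteq X$ with $\Y(G,X) = X$ must contain every maximal cell of $X$.

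Conversely, the set $G^{\ast}$ of all maximal cells already satisfies $\Y(G^{\ast},X) = X$: given $c \in X$ that is not maximal, $X$ contains the cell above $c$ or the cell to the right of $c$, so one may step to such a cell and repeat; writing $c = (r,i)$ for the cell in row $r$ (counted from the top) and column $i$ (counted from the left), each such step strictly increases the integer $i - r$, which is bounded above since $X \subseteq \Y_k$ is finite. Hence after finitely many steps one reaches a maximal cell $c^{\ast}$, and reversing the path — now a sequence of left and down moves inside $X$ — shows $c \in \Y(\{c^{\ast}\},X) \subseteq \Y(G^{\ast},X)$. Thus $G^{\ast}$ is a generating subset contained in every generating subset, so it is the unique minimal one and $G_{\min}(X) = G^{\ast}$; this proves the first assertion.

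For the second assertion I would use Proposition \ref{Pstyt} together with the four diagrams displayed immediately before the lemma. By Proposition \ref{Pstyt} we have $\Y(F(M_k/M_j)) = \Y(M_k/M_j)^T$ and $\Y(F(P_j/P_k)) = \Y(P_j/P_k)^T$, and the displayed diagrams identify $\Y(M_k/M_j)$ with a single column, $\Y(F(M_k/M_j))$ with a single row, $\Y(P_j/P_k)$ with the union of the leftmost $k-j$ columns of $\Y_{k-1}$, and $\Y(F(P_j/P_k))$ with the union of the topmost $k-j$ rows of $\Y_{k-1}$. In each of these four diagrams every column is a run of consecutive cells beginning in the first row, so a maximal cell must lie in the first row of the diagram; and in each case the first row is a run of consecutive columns starting at column $1$, so it has a unique rightmost cell, which is then the unique maximal cell. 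By the first part, $G_{\min}(X)$ therefore consists of this single cell — concretely, the top cell of the column, the rightmost cell of the row, the top cell of the $(k-j)$-th column of $\Y_{k-1}$, or the rightmost cell of the first row of $\Y_{k-1}$, respectively.

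The main obstacle is the bookkeeping in the second assertion: correctly matching the displayed diagrams with the stated sub-diagrams of $\Y_{k-1}$, and verifying that their columns are top-justified (which is what forces every maximal cell into the first row). The uniqueness statement of the first assertion is a short monovariant argument and presents no real difficulty.
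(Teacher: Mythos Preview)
Your proof is correct and follows essentially the same approach as the paper. The paper's argument is a two-line sketch: uniqueness ``follows by assigning a coordinate to each cell that strictly increases upon moving one cell down or to the left,'' and for the second part it simply asserts that each of the four diagrams ``is clear''ly generated by one cell. Your proof unpacks both steps --- identifying $G_{\min}(X)$ explicitly as the set of maximal cells via the monovariant $i-r$, and then reading off the unique maximal cell from each displayed diagram --- but the underlying idea is the same.
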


\begin{proof}
That every STYT has a minimal generating set is obvious since $X$ has
only finitely many cells; that this set is unique follows by assigning a
coordinate to each cell that strictly increases upon moving one cell down
or to the left. Now it is clear that each of $\Y(M_k / M_j)$ and $\Y(P_j
/ P_k)$, or their duals, is generated by one cell, so we are done by the
uniqueness of $G_{\min}(X)$ for all $X$.
\end{proof}

Proposition \ref{Pprojmaps}(1) and Lemma \ref{Lverma} have combinatorial
interpretations using STYTs as well. Given any quotient of projective
modules $P_r / P_k$ and a nonzero element $x \in P_r / P_k$, first define
the \textit{cell of $x$}, denoted by $cell(x)$, as follows: consider $j
\in [r, k)$ such that $x \in (P_j/P_k) \setminus (P_{j+1}/P_k)$ inside
$P_r / P_k$. Now consider the submodule generated by $x$ inside the Verma
module $M_j \cong (P_r/P_k) / (P_{r+1}/P_k)$, say $M_s$ for $s \leq j$.
Then $cell(x)$ is defined to be the cell numbered $\framebox{{\it s}}$
that is $j-r$ steps to the left and $j-s$ steps below the generating cell
$G(\Y(P_r/P_k))$.

Now if $N$ is the submodule generated by $v_{j,r,s} := d^{\lambda_j -
\lambda_s} u^{\lambda_j - \lambda_r} 1_{P_r/P_k} \in P_r/P_k$, then one
has:
\[
\Y(N) = \Y(A \cdot v_{j,r,s}) = \Y(cell(v_{j,r,s})) \subset \Y(P_r /
P_k).
\]
\noindent The same result holds, albeit with a simpler proof, for the
highest weight module $M_r / M_s$ and for the lowest weight module $F(M_r
/ M_s)$.

\subsection{Morphisms and extensions of sub-triangular Young tableaux}

We now discuss morphisms and extensions between STYTs, as well as the
subcategory $\ycat$ of decreasing Young tableaux.

\begin{defn}\label{Dstyt}
Fix STYTs $X_1, X_2$.
\begin{enumerate}
\item Define a \textit{map} $\varphi : X_1 \to X_2$ to be a translation
(in the plane) of the diagram $X_1$, satisfying the following conditions:
\begin{enumerate}
\item $\varphi$ is $n$-equivariant: for all cells $c \in X_1$, either
$\varphi(c)$ is a cell in $X_2$ with $n(\varphi(c)) = n(c)$, or
$\varphi(c)$ is disjoint from $X_2$.
\item $\Y(\varphi(G_{\min}(X_1)), X_2) = X_2 \cap \varphi(\Y(X_1))$ is
non-empty.
\end{enumerate}

\item A map $\varphi : X_1 \to X_2$ is \textit{injective} if
$\varphi(X_1) \subset X_2$, and \textit{surjective} if $\varphi(X_1)
\supset X_2$.

\item A \textit{morphism} $: X_1 \to X_2$ is a formal $\F$-linear
combination of maps $: X_1 \to X_2$.

\item Define an \textit{extension} of $X_2$ by $X_1$ to be a disjoint
juxtaposition of $X_1$ and $X_2$ (but sharing at least one edge of one
cell), such that their (disjoint) union is a STYT, and $X_1$ is either
above or to the right of $X_2$.

\item Let $\Hom(X_1,X_2) = \Ext^0(X_1, X_2)$ denote the set of morphisms
from $X_1$ to $X_2$, and denote by $\Ext^1(X_1, X_2)$ the $\F$-span of
extensions of $X_2$ by $X_1$.

\item Define $\Y(\oplus_{j=1}^k N_j) := \coprod_{j=1}^k \Y(N_j)$ for all
objects $N_j \in \calo[\lambda]$ for which $\Y(N_j)$ is defined.
\end{enumerate}
\end{defn}

The following result provides connections between combinatorics and
blocks of triangular GWAs. To our knowledge, these connections have not
been explored in the literature.

\begin{theorem}\label{Tyst}
We work in the block $\calo[\lambda]$, where $[\lambda] = \{ \lambda_1 <
\cdots < \lambda_n \}$. Then the assignment $N \mapsto \Y(N)$ respects
morphisms and extensions, including under duality. More precisely, the
following analogues of Equations \eqref{Eext} and \eqref{Eprojext} hold:
\begin{equation}\label{Ecdsyt}
\dim \Ext^l_\calo(N,N') = \dim \Ext^l(\Y(N), \Y(N')), \quad \forall l=0,1,
\end{equation}

\noindent if $N,N'$ satisfy one of the following conditions:
\begin{enumerate}
\item $N = P_j / P_k$ as above, and $N' = M_r / M_s$ as above or $P_{j'}
/ P_{k'}$ for some $1 \leq j' < k' \leq n+1$;

\item $N,N'$ are both simple;

\item $N$ is projective and $N'$ is such that $\Y(N')$ is defined. If $N
= P_k$ for some $1 \leq k \leq n$ then $\dim \Hom(\Y(P_k), \Y(N'))$
equals the multiplicity of the cell $\framebox{k}$ in $\Y(N')$;

\item $N, F(N')$ are Verma modules;

\item or, if $(Y,Y')$ are one of the above pairs, and $N = F(Y'), N' =
F(Y)$. In other words,
\begin{equation}
\dim \Ext^l(\Y(N), \Y(N')) = \dim \Ext^l(\Y(N')^T, \Y(N)^T) = \dim
\Ext^l(\Y(F(N')), \Y(F(N))).
\end{equation}
\end{enumerate}
\end{theorem}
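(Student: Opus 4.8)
The plan is to reduce the theorem to the explicit numerical data already established in Theorems \ref{Thom} and \ref{Tproj} and Propositions \ref{Phom} and \ref{Pprojmaps}, by computing the combinatorial $\dim \Ext^l(\Y(N),\Y(N'))$ in closed form and matching it against those formulas case by case. First I would set up the combinatorial bookkeeping: for a STYT $X$ with $\framebox{k} \subset X \subset \Y_k$, record the multiset of column-top entries (a strictly decreasing sequence) and column-bottom entries; by the Propositions of Section \ref{Sstyt}, $X = \Y(N/N')$ where $\Psi(N)$ is read off the column tops and $\Psi(N')$ from the column bottoms (shifted by one). For $N = P_j/P_k$, $\Y(N)$ is the rectangular-ish diagram displayed above with $\Psi(P_j/P_k) = (k-1, k-2, \dots, j)$; similarly $\Y(M_r/M_s)$ is the single column with entries $r, r-1, \dots, s+1$. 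With $\Y(N), \Y(N')$ thus made explicit, the $l=0$ count is the number of $n$-equivariant translations $\varphi$ with $\Y(\varphi(G_{\min}(X_1)), X_2)$ nonempty, and the $l=1$ count is the number of legal disjoint juxtapositions forming a larger STYT with $X_1$ above-or-right of $X_2$.

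Next I would run through the five cases of the theorem. Case (2), $N,N'$ simple: $\Y(L_i)$ is the single cell $\framebox{i}$, and a translation carrying $\framebox{i}$ to a cell of $\Y(L_j)$ exists iff $i=j$ ($\Ext^0 = \F$ iff $i=j$); an extension of $\framebox{j}$ by $\framebox{i}$ that is a STYT, with $\framebox{i}$ above or right of $\framebox{j}$, forces $i = j-1$ for the ``above'' placement and $i = j+1$... — one checks against the STYT axioms (connectedness, the no-corner condition (d)) that exactly $|i-j|=1$ works, matching $\dim\Ext^1(L_i,L_j) = \mathbf{1}(|i-j|=1)$ from Theorem \ref{Thom}. (Note the combinatorial side sees only $l=0,1$, consistent with the restriction to $l=0,1$ in \eqref{Ecdsyt}.) Case (3), $N = P_k$: here $\Y(P_k) = \Y(P_k/P_{n+1})$ and one shows a map $\Y(P_k) \to \Y(N')$ is determined by where the single primitive generator goes, and the target cell must be numbered $\framebox{k}$; Proposition \ref{Pprojmaps}(1) identifies $\dim\Hom_\calo(P_k, N')$ with $[N' : L_k]$ via \eqref{Emult}, so both sides equal the number of $\framebox{k}$-cells in $\Y(N')$, using the last assertion of Proposition \ref{Pstyt}. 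Cases (1) and (4) are the substantial ones: for $N = P_j/P_k$, $N' = M_r/M_s$ or $P_{j'}/P_{k'}$, I would match the combinatorial count of translations against $\dim\Hom_\calo$ from Proposition \ref{Pprojmaps}(2)/\ref{Phom} — the key point being that the maps $\varphi_{(r,s),(j,k)}^{(t)}$ are precisely the translations pushing the generating cell $t$ steps, so the index $t$ ranging over $[1,\min(s-r,k-r,k-j)]$ corresponds exactly to admissible translations — and similarly match extensions against $\dim\Ext^1_\calo$ via \eqref{Eproj-dim} and \eqref{Eext}: an extension datum is a way of stacking $\Y(N')$ above or to the right of $\Y(N)$ so the union is again a STYT, and one reads off the same min/indicator expressions. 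Case (5) is then immediate from Proposition \ref{Pstyt}: duality on modules is transpose on diagrams ($\Y(F(Y)) = \Y(Y)^T$), transpose swaps ``above'' with ``to the right'' in the definition of extension and reverses the roles of the two arguments, and $\Ext^l_\calo(X,Y) \cong \Ext^l_\calo(F(Y),F(X))$ since $F$ is an exact contravariant involution on $\calo_\N$ (Remark \ref{Rverma}), so \eqref{Ecdsyt} for $(Y,Y')$ transports to $(F(Y'),F(Y))$.

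The main obstacle I anticipate is verifying, in Case (1) with $N' = P_{j'}/P_{k'}$ and in Case (4), that the \emph{combinatorial} extension count genuinely reproduces the piecewise-linear formula \eqref{Eproj-dim} for $\dim\Ext^1_\calo(P_r/P_s, P_j/P_k)$, with its three-case analysis and the subtle $\mathbf{1}(r\le j)\mathbf{1}(s\le k)(\min(0,j-s)+\min(s-r,k-j))$ shape. On the combinatorial side one must carefully enumerate the ways two STYT-shaped diagrams can be juxtaposed (sharing at least one cell-edge, union connected, union satisfying the no-inward-corner axiom (d), and the second summand strictly above-or-right of the first) and check the resulting count collapses to that formula in each region of $(r,s,j,k)$-space; I would organize this by the same case split used in the proof of Theorem \ref{Tproj}(1) ($r\le j, s\le k$; $j<r<k$; $r\notin(j,k], s>k$) so that the algebraic and combinatorial bookkeeping proceed in parallel. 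A secondary but routine obstacle is confirming that every morphism on the combinatorial side is an $\F$-linear combination of \emph{translations} that actually land compatibly — i.e., that Definition \ref{Dstyt}(1b) picks out exactly the nonzero $\varphi_{(r,s),(j,k)}^{(t)}$ and no spurious extra translations — which follows from Proposition \ref{Pprojmaps}(1)--(2) identifying the generator's image and the fact that a map out of a singly-generated module is determined by the image of the generator, but requires stating precisely.
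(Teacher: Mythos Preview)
Your proposal is correct and follows essentially the same approach as the paper: the paper's proof is the single sentence ``The result follows from the analysis carried out in this paper in the various special cases (and by visual inspection of the corresponding STYTs),'' and your plan is precisely an explicit unpacking of that visual inspection, matching the combinatorial counts of translations and juxtapositions against the algebraic $\Ext$-formulas already established in Theorems \ref{Thom}, \ref{Tproj} and Propositions \ref{Phom}, \ref{Pprojmaps}. Your anticipated difficulty in Case (1) --- that the enumeration of STYT juxtapositions must reproduce the piecewise expression \eqref{Eproj-dim} --- is real but is exactly the ``visual inspection'' the paper leaves to the reader, and your suggestion to organize it along the same case split as the proof of Theorem \ref{Tproj}(1) is the natural way to carry it out.
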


\noindent The result follows from the analysis carried out in this paper
in the various special cases (and by visual inspection of the
corresponding STYTs).
In fact the connection in  Theorem \ref{Tyst} is even stronger. Recall
via Proposition \ref{Pprojmaps} that the endomorphism algebra of
$\widetilde{\bp} := \bigoplus_{1 \leq r < s \leq n+1} P_r / P_s$ is
equipped with a $\Z_+$-grading, as well as a distinguished basis of
morphisms
\[
\varphi_{(r,s), (j,k)}^{(t)} : P_r / P_s \twoheadrightarrow P_r / P_{r-t}
\hookrightarrow P_{k-t} / P_k \hookrightarrow P_j / P_k.
\]

\noindent It is not hard to verify that the combinatorial counterparts of
these morphisms are precisely the distinct maps between the corresponding
STYTs:
\begin{equation}
\Y(\varphi_{(r,s), (j,k)}^{(t)}) : \Y(P_r / P_s)
\twoheadrightarrow \Y(P_r / P_{r+t}) \hookrightarrow \Y(P_{k-t} / P_k)
\hookrightarrow \Y(P_j / P_k).
\end{equation}

\noindent Moreover, the degree of the map $\varphi_{(r,s), (j,k)}^{(t)}$
precisely equals the Manhattan distance between the two (unique)
generating cells for the STYT map
$\displaystyle
\Y(\varphi_{(r,s), (j,k)}^{(t)}) : \Y(P_r / P_s) \to \Y(P_j / P_k)$,
i.e.,
\begin{align}
\deg \varphi_{(r,s), (j,k)}^{(t)} = &\ 2(k-t) - r - j\\
= &\ d_{\rm Manhattan} \left( G \left( \Y(\varphi_{(r,s),
(j,k)}^{(t)})(\Y(P_r / P_s)) \right), G(\Y(P_j / P_k)) \right). \notag
\end{align}

\noindent This also holds for the maps $\Y(f_{jk}^{++}), \Y(f_{jk}^{-
\bullet}), \Y(f_{jk}^{\bullet -})$ as in \eqref{Egrading2}. Thus, we have
shown:

\begin{prop}
The endomorphism algebra of $\Y(\widetilde{\bp}) = \displaystyle
\coprod_{1 \leq r < s \leq n+1} \Y(P_r/P_s)$ is naturally isomorphic to
$\End_\calo(\widetilde{\bp})$ as a finite-dimensional $\Z_+$-graded
$\F$-algebra.
\end{prop}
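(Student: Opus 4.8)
The plan is to build the isomorphism explicitly on the distinguished bases supplied by Proposition \ref{Pprojmaps}. Recall from that proposition that $\End_\calo(\widetilde{\bp})$ is a finite-dimensional $\Z_+$-graded $\F$-algebra with $\F$-basis $\{\varphi_{(r,s),(j,k)}^{(t)}\}$ (over $1\le r<s\le n+1$, $1\le j<k\le n+1$, $1\le t\le\min(s-r,k-r,k-j)$), with degrees given by \eqref{Egrading2} and multiplication by \eqref{Egrading}. Define $\Phi$ on this basis by $\Phi(\varphi_{(r,s),(j,k)}^{(t)}):=\Y(\varphi_{(r,s),(j,k)}^{(t)})$, the STYT map described just before the statement, and extend $\F$-linearly. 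The combinatorial algebra $\End(\Y(\widetilde{\bp}))=\bigoplus_{(r,s),(j,k)}\Hom(\Y(P_r/P_s),\Y(P_j/P_k))$ is automatically finite-dimensional, since each Hom-space is spanned by the finitely many translations of a finite diagram; so it suffices to check that $\Phi$ is a unit-preserving, grading-preserving, bijective algebra homomorphism.

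First I would pin down the combinatorial Hom-spaces. Using $\Psi(P_r/P_s)=(s-1,s-2,\dots,r)$, the diagram $\Y(P_r/P_s)$ is the trapezoidal staircase whose $i$-th column from the left ($1\le i\le s-r$) has top cell labelled $s-i$ and descends to $1$, recording the Verma flag $M_{s-1},\dots,M_r$. Unwinding Definition \ref{Dstyt}, an $n$-equivariant translation of $\Y(P_r/P_s)$ satisfying condition (b) must align the bottom ($1$-labelled) row of a leftmost block of $t\ge 1$ columns with a block of $t$ consecutive columns of $\Y(P_j/P_k)$ terminating at its rightmost column — exactly the combinatorial shadow of the factorization $P_r/P_s\twoheadrightarrow P_r/P_{r+t}\hookrightarrow P_{k-t}/P_k\hookrightarrow P_j/P_k$ of \eqref{Ephi}, with $t\le\min(s-r,k-r,k-j)$ forced by the columns available on the source side, the width $k-j$ of $\Y(P_j/P_k)$, and containment in the ambient triangle $\Y_k$. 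Since $\deg\varphi_{(r,s),(j,k)}^{(t)}=2(k-t)-r-j$ is strictly decreasing in $t$, the maps $\Y(\varphi_{(r,s),(j,k)}^{(t)})$ are pairwise distinct, and they exhaust all STYT maps $\Y(P_r/P_s)\to\Y(P_j/P_k)$; hence $\dim\Hom(\Y(P_r/P_s),\Y(P_j/P_k))={\bf 1}(r<k)\min(s-r,k-r,k-j)=\dim\Hom_\calo(P_r/P_s,P_j/P_k)$ by Theorem \ref{Tproj}(1), so $\Phi$ is a linear bijection on each Hom-space, hence on the whole algebra. Grading is preserved by the displayed identity just before the statement: placing integer coordinates on $\Y_k$ and reading off the generating cell of $\Y(P_j/P_k)$ and that of the image block (located via Proposition \ref{Pprojmaps}(1)) gives $2(k-t)-r-j$ as their Manhattan distance.

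Next comes multiplicativity. Given $\varphi_{(j,k),(a,b)}^{(u)}$ and $\varphi_{(r,s),(j,k)}^{(t)}$, the composite STYT map $\Y(\varphi_{(j,k),(a,b)}^{(u)})\circ\Y(\varphi_{(r,s),(j,k)}^{(t)})$ is again a translation of $\Y(P_r/P_s)$; tracing the two alignments shows that for $v:=u+t+j-k>0$ it carries the leftmost $v$ columns of $\Y(P_r/P_s)$ onto the terminal block of $\Y(P_a/P_b)$ — the combinatorial trace of the chain $P_r/P_s\twoheadrightarrow P_r/P_{r+v}\hookrightarrow P_{b-v}/P_b\hookrightarrow P_a/P_b$ from the proof of Proposition \ref{Pprojmaps}(2) — whereas for $v\le 0$ the first image block lands past the portion of $\Y(P_a/P_b)$ visible to the second map, so condition (b) of Definition \ref{Dstyt} fails and the composite is the zero morphism. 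This matches $\varphi_{(j,k),(a,b)}^{(u)}\circ\varphi_{(r,s),(j,k)}^{(t)}={\bf 1}(v>0)\,\varphi_{(r,s),(a,b)}^{(v)}$ from \eqref{Egrading}, and products between maps with mismatched middle diagrams vanish on both sides. Since $\Phi$ also carries each unit $\varphi_{(r,s),(r,s)}^{(s-r)}=\id_{P_r/P_s}$ to $\id_{\Y(P_r/P_s)}$, and the bases on both sides are intrinsically defined, $\Phi$ is a natural isomorphism of finite-dimensional $\Z_+$-graded $\F$-algebras.

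The hard part will be the multiplicativity step — concretely, verifying that a composite of two individually nonzero STYT maps is the zero morphism exactly when $u+t+j-k\le 0$. The delicate point is that this vanishing is governed by condition (b) of Definition \ref{Dstyt} (non-emptiness of $\Y(\varphi(G_{\min}),X_2)$) rather than by mere disjointness of the underlying translated diagrams, and one must rule out spurious composite translations not accounted for by the $\Y(\varphi^{(v)})$. A secondary, routine nuisance is the degenerate boundary cases — such as $j=k$, $r=s$, or $t,u$ extremal — where some diagrams collapse to a single column or become empty; these are dispatched separately.
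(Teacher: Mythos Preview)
Your proposal is correct and follows the same approach as the paper, which treats the proposition as an immediate consequence of the preceding discussion (that the $\Y(\varphi_{(r,s),(j,k)}^{(t)})$ are precisely the distinct STYT maps between the relevant diagrams, with degrees matching via the Manhattan-distance identity) and offers no further argument beyond ``Thus, we have shown''. Your write-up is simply a fleshed-out version of that sketch, making explicit the dimension count, the grading check, and the multiplicativity via \eqref{Egrading}; the ``hard part'' you flag is exactly the step the paper leaves to the reader.
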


Finally, we define the category of diagrams.

\begin{defn}
Denote by $\ycat$ the category defined by the following structure:
\begin{enumerate}
\item The objects are finite disjoint unions of STYTs.
\item The morphisms are as in Definition \ref{Dstyt}; thus, $\ycat$ is
$\F$-linear.
\item There is a duality functor $(\cdot)^T : \ycat \to \ycat$ that
squares to the identity.
\item Extensions between objects are defined as in Definition
\ref{Dstyt}.
\end{enumerate}
\end{defn}

Notice that every object of $\ycat$ is a (possibly disconnected)
sub-diagram of $\Y_k$ for some $k \geq 1$. The analysis after Definition
\ref{Dstyt} now shows that triangular GWAs categorify Young diagrams.

\begin{prop}
Let $\mathscr{P}$ denote the full subcategory of the block
$\calo[\lambda]$ whose objects are $\{ P_r / P_s : 1 \leq r < s \leq n+1
\}$. Then the assignment $\Y(\cdot)$ is a covariant additive functor from
$\mathscr{P}$ to $\ycat$ that respects morphisms and duality.
\end{prop}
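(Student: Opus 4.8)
The plan is to build $\Y(\cdot)$ on $\mathscr{P}$ out of the distinguished basis of morphisms supplied by Proposition~\ref{Pprojmaps}, and then to deduce functoriality and compatibility with duality by transcribing the module-theoretic bookkeeping already established in this section into its combinatorial shadow. On objects I would send $P_r/P_s$ to the staircase diagram displayed (and verified to be an STYT) after Proposition~\ref{Pstyt}; this is a well-defined assignment since distinct pairs $(r,s)$ give modules with distinct formal characters. On morphisms, Proposition~\ref{Pprojmaps}(2) provides an $\F$-basis $\{\varphi_{(r,s),(j,k)}^{(t)}\}$ of $\Hom_\calo(P_r/P_s,P_j/P_k)$, and I would declare $\Y(\varphi_{(r,s),(j,k)}^{(t)})$ to be the translation $\Y(P_r/P_s)\to\Y(P_j/P_k)$ exhibited in the discussion just before the statement, extended $\F$-linearly. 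The first thing to check is that this translation is a bona fide morphism of $\ycat$ in the sense of Definition~\ref{Dstyt}(1): $n$-equivariance is exactly the content of the formula in Proposition~\ref{Pprojmaps}(1) for the image of the generator as a monomial $d^{a}u^{b}1$, while condition (b) holds because $\varphi_{(r,s),(j,k)}^{(t)}$ is a nonzero $A$-module map whose image is the submodule $P_{k-t}/P_k$, so the sub-STYT generated by the image of the generating set is the nonempty diagram $\Y(P_{k-t}/P_k)$. Well-definedness on $\Hom$-spaces is then automatic, since $\Y$ is specified on a basis and extended linearly.

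For functoriality the identity of $P_r/P_s$ is $\varphi_{(r,s),(r,s)}^{(s-r)}$ (Proposition~\ref{Pprojmaps}), whose associated translation is the trivial one, i.e.\ $\id_{\Y(P_r/P_s)}$; so $\Y$ preserves identities and is covariant by construction. Preservation of composition reduces, on basis morphisms, to the statement that the composition law \eqref{Egrading} --- including its vanishing clause $\mathbf{1}(u+t+j-k>0)$ --- coincides with the rule for composing the corresponding STYT translations, the vanishing on the combinatorial side being precisely that the translated copy of $\Y(P_r/P_s)$ lands disjoint from $\Y(P_a/P_b)$. But this is exactly the verification carried out in the paragraphs culminating in the Proposition immediately preceding the present statement, where it is packaged as an isomorphism of graded endomorphism algebras of $\widetilde{\bp}$; here one simply records the same computation for all $\Hom$-spaces among the $P_r/P_s$, with the degree/Manhattan-distance matching of \eqref{Egrading2} and of the display after Definition~\ref{Dstyt} carrying over verbatim. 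Additivity is then immediate from the $\F$-linearity of $\Y$ on morphism spaces together with the convention $\Y(\bigoplus_j N_j):=\coprod_j\Y(N_j)$ of Definition~\ref{Dstyt}(6).

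Compatibility with duality would come from Proposition~\ref{Pstyt} together with Theorem~\ref{Tyst}(5): the former gives $\Y(F(N))=\Y(N)^T$ for every subquotient $N$ of $T_n$ --- in particular $\Y(F(P_r/P_s))=\Y(P_r/P_s)^T$ --- while the latter says that applying the restricted-duality functor $F$ (Remark~\ref{Rverma}(6)) to one of the basis morphisms transposes the associated STYT map. Hence $\Y\circ F$ and $(\cdot)^T\circ\Y$ agree on objects and on morphisms, which is the assertion that $\Y$ respects duality. (Although $\mathscr{P}$ need not itself be stable under $F$, this identity makes sense and holds on the common domain, which includes every object generated by a single weight vector.) The one step I expect to require genuine care is the composition step --- confirming that the category $\ycat$ has been set up so that the algebra structure of Proposition~\ref{Pprojmaps}, degenerate compositions included, is reproduced faithfully --- but this has essentially been done already in the discussion preceding the statement; what remains is to assemble those verifications, together with the identity and duality bookkeeping above, into the functoriality claim. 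The rest (well-definedness, covariance, additivity) is routine.
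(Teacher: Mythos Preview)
Your proposal is correct and follows the same approach as the paper, which in fact gives no explicit proof at all: the proposition is stated immediately after the sentence ``The analysis after Definition~\ref{Dstyt} now shows that triangular GWAs categorify Young diagrams,'' leaving the verification to the preceding discussion (the correspondence $\varphi_{(r,s),(j,k)}^{(t)}\leftrightarrow\Y(\varphi_{(r,s),(j,k)}^{(t)})$, the degree/Manhattan-distance matching, and the endomorphism-algebra isomorphism). You have spelled out precisely that assembly --- objects, basis morphisms, identity, composition via \eqref{Egrading}, additivity, and duality via Proposition~\ref{Pstyt} --- which is exactly what the paper intends but omits.
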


As Theorem \ref{Tyst} suggests, there are other objects on which the
functor $\Y(\cdot)$ respects additional structure. For instance,
morphisms and extensions between Verma modules and dual Verma modules, or
between projectives and arbitrary subquotients of tilting modules, are
also respected by $\Y(\cdot)$. Thus, the discussion in this section
naturally leads to the following overarching question, various aspects of
which will be considered in future study.\medskip

\noindent \textbf{Question.}
Construct a larger category $\widetilde{\ycat} \supset \ycat$ of possibly
non-planar diagrams ``glued'' along edges, and define a functor $\Y(\cdot)
: \calo[\lambda] \to \widetilde{\ycat}$, such that the following
properties are satisfied:
\begin{enumerate}
\item $\Y(\cdot)$ restricts to the functor $\Y(\cdot)$ studied above,
when applied to subquotients of $T_n = P_1$.

\item $\widetilde{\ycat}$ is an $\F$-linear category, equipped with
morphisms, extensions, and a duality functor, which extend to
$\widetilde{\ycat}$ their counterparts in $\ycat$.

\item The functor $\Y(\cdot)$ is exact, and also respects extensions and
duality between objects of $\calo[\lambda]$. Thus, $\Y(F(N)) = \Y(N)^T$,
and Equation \eqref{Ecdsyt} holds for all $N,N'$ in $\calo[\lambda]$.
\end{enumerate}

\noindent This question has obvious connections to the representation
type of the module category $\calo[\lambda]$ (see e.g.~\cite{FNP} for an
analysis in a parallel setting).
Note that the exactness of $\Y(\cdot)$ is also natural to
expect. For instance, the short exact sequence in Equation
\eqref{Etiltsub} has a combinatorial counterpart, as does Equation
\eqref{Eses}:
\[
\emptyset \to \Y(P_r / P_s) \mapdef{\Y(f_{r,s}^{++})} \Y(P_{r+1} / P_{s+1})
\to \Y(M_s / M_r)^T \to \emptyset.
\]

\subsection*{Acknowledgments}

The first author thanks Tom Braden and Daniel Bump for useful
discussions. We would also like to thank the referee for a careful
reading of the paper and for useful comments that improved the
exposition.




\begin{thebibliography}{88}
\bibitem{Ba}
V.V.~Bavula, {\em Generalized Weyl algebras and their representations},
  Algebra i Analiz \textbf{4} no.~1 (1992), 75--97; English translation,
  St.~Petersburg Math.~Journal \textbf{4} no.~1 (1993), 71--92.
 
\bibitem{BGS}
A.~Beilinson, V.~Ginzburg, and W.~Soergel, {\em Koszul duality patterns
  in representation theory},
  \href{http://www.ams.org/journals/jams/1996-9-02/S0894-0347-96-00192-0/}{Journal
  of the American Mathematical Society} \textbf{9} (1996), 473--527.

\bibitem{BR}
G.~Benkart and T.~Roby, {\em Down-up algebras},
  \href{http://www.sciencedirect.com/science/article/pii/S0021869398975111}{Journal
  of Algebra} \textbf{209} no.~1 (1998), 305--344.

\bibitem{BEG}
Y.~Berest, P.~Etingof, and V.~Ginzburg, {\em Finite-dimensional
  representations of rational Cherednik algebras},
  \href{http://imrn.oxfordjournals.org/content/2003/19/1053}{International
  Mathematics Research Notices} \textbf{19} (2003), 1053--1088.

\bibitem{BGG1}
J.~Bernstein, I.M.~Gelfand, and S.I.~Gelfand, {\em A category of
  $\lie{g}$-modules}, Funktsional Anal.~i Prilozhen \textbf{5} (1971), no.
  2, 1--8; English translation,
  \href{http://link.springer.com/article/10.1007/BF01077933}{Functional
  Analysis and Its Applications} \textbf{10} (1976), 87--92.

\bibitem{BNW}
B.D.~Boe, D.K.~Nakano, and E.~Wiesner, {\em Category $\calo$ for the
  Virasoro algebra: cohomology and Koszulity},
  \href{http://msp.org/pjm/2008/234-1/p01.xhtml}{Pacific Journal of
  Mathematics} \textbf{234} (2008), 1--22.

\bibitem{BG}
T.~Braden and M.~Grinberg, {\em Perverse sheaves on rank
  stratifications},
  \href{http://projecteuclid.org/euclid.dmj/1077229136}{Duke Mathematical
  Journal} \textbf{96} no.~2 (1999), 317--362.

\bibitem{BLPW}
T.~Braden, A.~Licata, N.~Proudfoot, and B.~Webster, {\em Hypertoric
  category $\calo$},
  \href{http://www.sciencedirect.com/science/article/pii/S000187081200240X}{Advances
  in Mathematics} \textbf{231} no.~3--4 (2012), 1487--1545.

\bibitem{BM}
K.A.~Brown and M.~Macauley, {\em Ambiskew Hopf algebras},
  \href{http://www.sciencedirect.com/science/article/pii/S0022404912001338}{Journal
  of Pure and Applied Algebra} \textbf{217} no.~1 (2013), 59--74.

\bibitem{CM}
P.A.A.B.~Carvalho and I.M.~Musson, {\em Down-up algebras and their
  representation theory},
  \href{http://www.sciencedirect.com/science/article/pii/S0021869399982637}{Journal
  of Algebra} \textbf{228} no.~1 (2000), 286--310.

\bibitem{CS1}
T.~Cassidy and B.~Shelton, {\em Basic properties of generalized down-up
  algebras},
  \href{http://www.sciencedirect.com/science/article/pii/S0021869304002923}{Journal
  of Algebra} \textbf{279} (2004), 402--421.

\bibitem{CS2}
T.~Cassidy and B.~Shelton, {\em Generalizing the notion of Koszul
  algebra},
  \href{http://link.springer.com/article/10.1007/s00209-007-0263-8}{Mathematische
  Zeitschrift} \textbf{260} no.~1 (2008), 93--114.

\bibitem{CPS1}
E.~Cline, B.~Parshall, and L.~Scott, {\em Finite-dimensional algebras and
  highest weight categories},
  \href{http://www.degruyter.de/journals/crelle/278_5226_ENU_h.htm}{J.~reine
  angew.~Math.} \textbf{391} (1988), 85--99.

\bibitem{CPS2}
E.~Cline, B.~Parshall, and L.~Scott, {\em Graded and non-graded
  Kazhdan-Lusztig theories}, Algebraic groups and Lie groups
  (Austral.~Math.~Soc.~Lect.~Ser.), vol.~9, Cambridge University Press,
  1997.

\bibitem{CBH}
W.~Crawley-Boevey and M.P.~Holland, {\em Noncommutative deformations
  of Kleinian singularities},
  \href{https://projecteuclid.org/euclid.dmj/1077231679}{Duke
  Mathematical Journal} \textbf{92} (1998), 605--635.

\bibitem{Don}
S.~Donkin, {\em The $q$-Schur algebra}, London Mathematical Society
  Lecture Note Series \textbf{253}, Cambridge University Press,
  Cambridge, 1998.

\bibitem{EGG}
P.~Etingof, W.L.~Gan, and V.~Ginzburg, {\em Continuous Hecke algebras},
  \href{http://link.springer.com/article/10.1007/s00031-005-0404-2}{Transformation
  Groups} \textbf{10} (2005), no.~3-4, 423--447.

\bibitem{FNP}
V.~Futorny, D.~Nakano, and R.D.~Pollack, {\em Representation type of the
  blocks of Category $\mathcal{O}$},
  \href{http://qjmath.oxfordjournals.org/content/52/3/285.abstract}{The
  Quarterly Journal of Mathematics} \textbf{52} No.~3 (2001), 285--305.

\bibitem{Ha}
J.~Hartwig, {\em Hopf structures on ambiskew polynomial rings},
  \href{http://www.sciencedirect.com/science/article/pii/S0022404907001879}{Journal
  of Pure and Applied Algebra} \textbf{212} no.~4 (2008), 863--883.

\bibitem{HL}
J.~Hong and H.~Lee, {\em Young tableaux and crystal $B(\infty)$ for
  finite simple Lie algebras},
  \href{http://www.sciencedirect.com/science/article/pii/S0021869308003128}{Journal
  of Algebra} \textbf{320} no.~10 (2008), 3680--3693.

\bibitem{JWY}
Q.~Ji, D.~Wang, and S.~Yang, {\em Finite-dimensional representations of
  quantum group $U_q(f(K,H))$},
  \href{http://www.tandfonline.com/doi/abs/10.1081/AGB-120003465}{Communications
  in Algebra} \textbf{30} no.~5 (2002), 2191--2211.

\bibitem{JWZ}
Q.~Ji, D.~Wang, and X.~Zhou, {\em Finite-dimensional representations of
  quantum groups $U_q(f(K))$},
  \href{http://eastwestmath.org/index.php/EWM/article/view/96p}{East-West
  Journal of Mathematics} \textbf{2} no.~2 (2000), 201--213.

\bibitem{JZ}
N.~Jing and J.~Zhang, {\em Quantum Weyl algebras and deformations of
  $U(\lie{g})$},
  \href{http://projecteuclid.org/euclid.pjm/1102368925}{Pacific Journal
  of Mathematics} \textbf{171} no.~2 (1995), 437--454.

\bibitem{Jo1}
D.~Jordan, {\em Finite-dimensional simple modules over certain iterated
  skew polynomial rings},
  \href{http://www.sciencedirect.com/science/article/pii/0022404995900179}{Journal
  of Pure and Applied Algebra} \textbf{98} no.~1 (1995), 45--55.

\bibitem{Jo2}
D.~Jordan, {\em Down-up algebras and ambiskew polynomial rings},
  \href{http://www.sciencedirect.com/science/article/pii/S0021869399982649}{Journal
  of Algebra} \textbf{228} no.~1 (2000), 311--346.

\bibitem{JW}
D.~Jordan and I.~Wells, {\em Simple ambiskew polynomial rings},
  \href{http://www.sciencedirect.com/science/article/pii/S0021869313000835}{Journal
  of Algebra} \textbf{382} (2013), 46--70.

\bibitem{Kac1}
V.G.~Kac, {\em Lie superalgebras},
  \href{http://www.sciencedirect.com/science/article/pii/0001870877900172/}{Advances
  in Mathematics} \textbf{26} no.~1 (1977), 8--96.

\bibitem{Kh2}
A.~Khare, {\em Axiomatic framework for the BGG Category $\mathcal{O}$},
  preprint (2015),
  \href{http://arxiv.org/abs/1502.06706}{http://arxiv.org/abs/1502.06706}.

\bibitem{KM}
E.E.~Kirkman and I.M.~Musson, {\em Hopf down-up algebras},
  \href{http://www.sciencedirect.com/science/article/pii/S0021869303000322}{Journal
  of Algebra} \textbf{262} no.~1 (2003), 42--53.

\bibitem{Ku1}
R.S.~Kulkarni, {\em Irreducible representations of Witten's deformations
  of $U(\lie{sl}_2)$},
  \href{http://www.sciencedirect.com/science/article/pii/S002186939897692X}{Journal
  of Algebra} \textbf{214} no.~1 (1999), 64--91.

\bibitem{Ku2}
R.S.~Kulkarni, {\em Down-up algebras and their representations},
  \href{http://www.sciencedirect.com/science/article/pii/S0021869301988921}{Journal
  of Algebra} \textbf{245} no.~2 (2001), 431--462.

\bibitem{LeB}
L.~Le Bruyn, {\em Conformal $\lie{sl}(2)$-algebras},
  \href{http://www.tandfonline.com/doi/abs/10.1080/00927879508825283}{Communications
  in Algebra} \textbf{23} no.~4 (1995), 1325--1362.

\bibitem{LL}
S.~Launois and S.A.~Lopes, {\em Classification of factorial generalized
  down-up algebras}, 
  \href{http://www.sciencedirect.com/science/article/pii/S0021869313004559}{Journal
  of Algebra} \textbf{396} (2013), 184--206.

\bibitem{MP}
R.V.~Moody and A.~Pianzola, {\em Lie algebras with triangular
  decompositions}, Canadian Mathematical Society Series of Monographs and
  Advanced Texts, Wiley Interscience, New York-Toronto, 1995.

\bibitem{PP}
A.~Polishchuk and L.~Positselski, {\em Quadratic Algebras}, University
  Lecture Series \textbf{37}, American Mathematical Society, Providence,
  RI, 2005.

\bibitem{Pr}
S.B.~Priddy, {\em Koszul resolutions},
  \href{http://www.ams.org/journals/tran/1970-152-01/S0002-9947-1970-0265437-8/}{Transactions
  of the American Mathematical Society} \textbf{152} no.~1 (1970),
  39--60.

\bibitem{Ro}
A.L.~Rosenberg, {\em Noncommutative algebraic geometry and
  representations of quantized algebras}, Kluwer, Dordrecht, 1995.

\bibitem{Ru}
S.~Rueda, {\em Some algebras similar to the enveloping algebra of
  $\lie{sl}_2$},
  \href{http://www.tandfonline.com/doi/abs/10.1080/00927870209342375}{Communications
  in Algebra} \textbf{30} no.~3 (2002), 1127--1152.

\bibitem{Smi}
S.P.~Smith, {\em A class of algebras similar to the enveloping algebra of
  $\lie{sl}_2$},
  \href{http://www.ams.org/journals/tran/1990-322-01/S0002-9947-1990-0972706-5/}{Transactions
  of the American Mathematical Society} \textbf{262} (1980), no.~2,
  335--366.

\bibitem{St}
C.~Stroppel, {\em Category $\mathcal{O}$: Quivers and endomorphism rings
  of projectives},
  \href{http://www.ams.org/journals/ert/2003-007-15/S1088-4165-03-00152-3/}{Representation
  Theory} \textbf{7} (2003), 322--345.

\bibitem{Ta1}
X.~Tang, {\em On irreducible weight representations of a new deformation
  $U_q(\lie{sl}_2)$ of $U(\lie{sl}_2)$},
  \href{http://www.ashdin.com/journals/jglta/jglta.aspx}{Journal of
  Generalized Lie Theory and Applications} \textbf{1} no.~2 (2007),
  97--105.

\bibitem{Ta2}
X.~Tang, {\em Constructing irreducible representations of quantum groups
  $U_q(f_m(K))$}, Front.~Math.~China \textbf{3} no.~3 (2008), 371--397.

\bibitem{TX}
X.~Tang and Y.~Xu, {\em On representations of quantum groups
  $U_q(f_m(K,H))$},
  \href{http://dx.doi.org/10.1017/S0004972708000701}{Bulletin of the
  Australian Mathematical Society} \textbf{78} no.~2 (2008), 261--284.

\bibitem{Wi}
E.~Witten, {\em Gauge theories, vertex models, and quantum groups},
  \href{http://www.sciencedirect.com/science/article/pii/055032139090115T}{Nuclear
  Physics B} \textbf{330} (1990), 285--346.

\bibitem{Wo}
S.L.~Woronowicz, {\em Twisted $SU(2)$-group: An example of a
  non-commutative differential calculus},
  \href{http://dx.doi.org/10.2977/prims/1195176848}{Publications of the
  Research Institute for Mathematical Sciences} \textbf{23} (1987),
  117--181.

\bibitem{Zha}
K.~Zhao, {\em Centers of down-up algebras},
  \href{http://www.sciencedirect.com/science/article/pii/S0021869398976955}{Journal
  of Algebra} \textbf{214} no.~1 (1999), 103--121.

\bibitem{Zhi}
W.~Zhixiang, {\em Double loop quantum enveloping algebras},
  \href{http://www.worldscientific.com/doi/abs/10.1142/S021949881350031X}{Journal
  of Algebra and its Applications} \textbf{12} no.~7 (2013).
\end{thebibliography}
\end{document}